\newtheorem{theorem}{Theorem}[section]
\newtheorem{lemma}[theorem]{Lemma}
\newtheorem{proposition}[theorem]{Proposition}
\newtheorem{remark}[theorem]{Remark}
\newtheorem{assumption}[theorem]{Assumption}
\numberwithin{equation}{section}
\newcommand{\nc}{\normalcolor}
\newcommand{\dif}{\mathrm{d}}
\newcommand{\E}{\mathbf{E}}
\newcommand{\R}{\mathbf{R}}
\newcommand{\C}{\mathbf{C}}
\newcommand{\N}{\mathbf{N}}
\newcommand{\ii}{\mathrm{i}}
\newcommand{\ee}{\mathrm{e}}
\newcommand{\bw}{\bm{w}}
\newcommand{\bfv}{\mathbf{v}}
\newcommand{\bfw}{\mathbf{w}}
\title[Decorrelation transition in the Wigner minor process]{Decorrelation transition in the Wigner minor process}
\date{\today}
\begin{document}

\maketitle

\vspace{0.25cm}

\renewcommand{\thefootnote}{\fnsymbol{footnote}}

\noindent
\mbox{}%
\hfill%
\begin{minipage}{0.19\textwidth}
	\centering
	{Zhigang Bao}\footnotemark[1]\\
	\footnotesize{\textit{zgbao@hku.hk}}
\end{minipage}
\hfill%
\begin{minipage}{0.19\textwidth}
	\centering
	{Giorgio Cipolloni}\footnotemark[2]\\
	\footnotesize{\textit{gcipolloni@arizona.edu}}
\end{minipage}
\hfill%
\begin{minipage}{0.19\textwidth}
	\centering
	{L\'aszl\'o Erd\H{o}s}\footnotemark[3]\\
	\footnotesize{\textit{lerdos@ist.ac.at}}
\end{minipage}
\hfill%
\begin{minipage}{0.19\textwidth}
	\centering
	{Joscha Henheik}\footnotemark[3]\\
	\footnotesize{\textit{joscha.henheik@ist.ac.at}}
\end{minipage}
\hfill%
\begin{minipage}{0.19\textwidth}
	\centering
	{Oleksii Kolupaiev}\footnotemark[3]\\
	\footnotesize{\textit{okolupaiev@ist.ac.at}}
\end{minipage}
\hfill%
\mbox{}%
\footnotetext[1]{Department of Mathematics, The University of Hong Kong.}
\footnotetext[1]{Supported by Hong Kong RGC Grant GRF 16304724, NSFC12222121 and NSFC12271475.}
\footnotetext[2]{Department of Mathematics, University of Arizona, 617 N Santa Rita Ave, Tucson, AZ 85721, USA.}
\footnotetext[3]{Institute of Science and Technology Austria, Am Campus 1, 3400 Klosterneuburg, Austria. 
}
\footnotetext[3]{Supported by the ERC Advanced Grant ``RMTBeyond'' No.~101020331.}

\renewcommand*{\thefootnote}{\arabic{footnote}}
\vspace{0.25cm}

\begin{abstract} 
We consider the Wigner minor process, i.e. the eigenvalues of an $N\times N$ Wigner matrix $H^{(N)}$ together 
with the eigenvalues of all its $n\times n$ minors, $H^{(n)}$, $n\le N$.  The 
 top eigenvalues of  $H^{(N)}$  and those of its immediate minor $H^{(N-1)}$ are very strongly correlated, but this correlation
 becomes weaker for smaller minors $H^{(N-k)}$ as $k$ increases.
  For the GUE minor process the critical transition regime around $k\sim N^{2/3}$ was analyzed  by 
 Forrester and Nagao \cite{determinantal_corr}  
 providing an explicit formula for the nontrivial joint correlation function.   We prove that this formula is universal, i.e. 
it  holds  for the Wigner  minor process. Moreover, we give a complete analysis of the sub- and supercritical regimes
both for eigenvalues and for the corresponding eigenvector overlaps, thus we prove the decorrelation transition 
in full generality. 
\end{abstract}
\vspace{0.15cm}

\footnotesize \textit{Keywords:} Minor Process, Local Law, Zigzag Strategy, Dyson Brownian motion.

\footnotesize \textit{2020 Mathematics Subject Classification:} 60B20, 60G55, 82C10.
\vspace{0.25cm}
\normalsize

\section{Introduction}

In the past fifteen years, research on the universality problem of random matrix spectral statistics has seen significant progress. Although most of the work focuses on the limiting properties of spectral statistics of a single random matrix, some understanding has also been gained about the relationship between spectral statistics of two dependent/coupled matrices. This includes, e.g., \cite{nagao1998multilevel} for the dynamical correlation of Dyson Brownian Motion (DBM), \cite{macroCLT_complex} for the study of weakly correlated DBMs, \cite{Bor_noise} for the eigenvector coupling/decoupling of random matrices under resampling, and \cite{nonHermdecay, eigenv_decorr} for the asymptotic independence of spectrally localized resolvents between two sufficiently distant deformations of the same random matrix. In the present work, we study a model that has a very refined understanding under the Gaussian setting but much less is known under the general setting. This is the \emph{Wigner minor process}, which we define below. 

\subsection{Model and results} 
 Let $H\equiv H^{(N)}=(h_{ij})_{N,N}$ be a $N$-dimensional Wigner matrix, i.e.~a random real symmetric or complex Hermitian matrix  with independent centered and entries of with variance $1/N$, up to the symmetry constraint $h_{ij}=\overline{h_{ji}}$. 
 For $1 \le n \le N$, we denote by $H^{(n)}$ its lower-right $n\times n$ corner and $\lambda_1^{(n)}\geq \ldots \geq \lambda_n^{(n)}$ are the ordered eigenvalues of $H^{(n)}$ with corresponding $\ell^2$-normalized eigenvectors $\bm w_1^{(n)}, ... , \bm w_n^{(n)} \in \C^n$, which we naturally identify with vectors in $\C^N$ by writing zeros in the first $N-n$ coordinates.  
Hereafter the multi-level particle system $\{(n, \lambda_{i}^{(n)}): i\in [ n], n\in [ N] \}\in [ N] \times \R$ will be called the {\it Wigner minor process}, where we used the notation $[ m]=\{1, \ldots, m\}$. There is a point at $(n, \lambda)$ if and only if the minor $H^{(n)}$ has an eigenvalue $\lambda$; cf.~Figure \ref{fig:minoreval} for illustration.

\begin{figure}[h]
\begin{center}
\includegraphics[height=5cm]{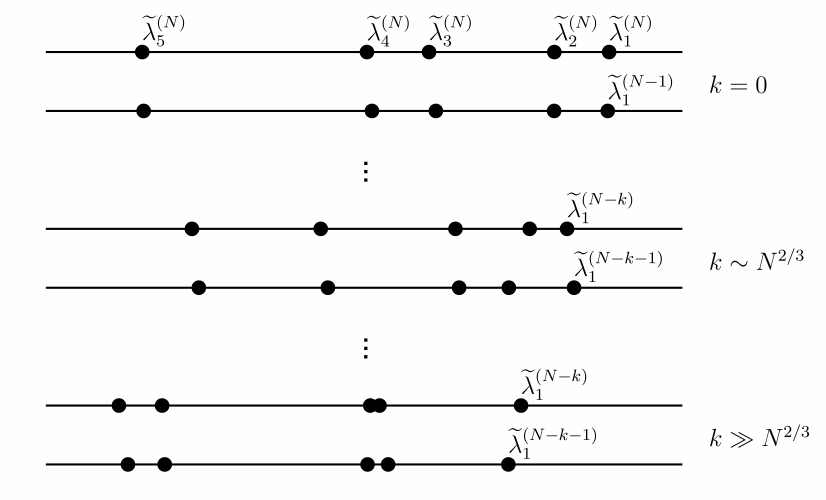}
\end{center}
\caption{Illustration of rescaled largest eigenvalues of the minors $H^{(N-k)}$. The rescaling
$\widetilde{\lambda}^{(N-k)}_j:=(1-k/N)^{-1/2}\lambda^{(N-k)}_j$ ensures that 
 $\widetilde{\lambda}^{(N-k)}_1$ remain close to 2 for all $k$.  Three regimes are depicted: the subcritical ($k=0$), the critical ($k\sim N^{2/3}$) and the supercritical ($k\gg N^{2/3}$) regime. In each regime we present two consecutive minors, highlighting the strong correlation between their top eigenvalues. In the critical regime ($k\sim N^{2/3}$), the configuration of the top eigenvalues still resembles that of the $k=0$, reflecting a non-trivial universal correlation between $\lambda_1^{(N)}$ and $\lambda_1^{(N-k)}$. In contrast, in the supercritical regime ($k\gg N^{2/3}$) the eigenvalue pattern
  deviates significantly from the subcritical case, consistent with the asymptotic independence of $\lambda_1^{(N)}$ and $\lambda_1^{(N-k)}$.}
\label{fig:minoreval}
\end{figure}

When focusing on a single level, i.e., the eigenvalues of a single $H^{(N)}$, there is a vast amount of literature 
about the universality of their local statistics. Among various results, the edge universality, also known as the \emph{Tracy-Widom law} for the extreme eigenvalues of $H^{(N)}$ has been established under rather general condition \cite{Sos_edge, Bou_extreme, Alt_band, landon2017edge},  even with an explicit rate of convergence \cite{schnelli2022convergence, schnelli2023quantitative}. 
However, beyond integrable models such as GUE, very little is known when we extend the study from a single-level system to a multi-level system, when correlations of eigenvalues from different levels become relevant.

In this paper, we establish a phase transition of the joint distribution of the top eigenvalues $\lambda_1^{(N)}$ and $\lambda_1^{(n)}$, and of the overlap of their corresponding top eigenvectors.\footnote{We focus on the top eigenvalue/eigenvector for simplicity of the presentation in the introduction; we refer to  Section \ref{sec:mainres} for the statements in full generality. Moreover, our results also hold for the bottom eigenvalues but for simplicity we focus on the top ones.}
From now on, we fix $k = k(N) := N-n$  and consider the following three regimes: 

\begin{itemize}
\item[(i)] In the \emph{supercritical regime} (see Theorem \ref{theo:large_k}), $k \gg N^{2/3}$, we prove that $\lambda_1^{(N)}$ and $\lambda_1^{(N-k)}$ are asymptotically independent, i.e.~both follow the Tracy-Widom law after suitable normalization, but independently.  Moreover, we show that the top eigenvectors are almost orthogonal, in the sense that $\big| \big\langle \bm w_1^{(N)}, \bm w_1^{(N-k)} \big\rangle \big| \ll 1$. 
\item[(ii)] In the \emph{critical regime} (see Theorem \ref{theo:crit_k}), $k \sim N^{2/3}$, we prove that the joint correlation function of $\lambda_1^{(N)}$ and $\lambda_1^{(N-k)}$ is asymptotically given by a non-trivial universal function. In the complex Hermitian case, this function has been computed by Forrester and Nagao for the \emph{Gaussian Unitary Ensemble (GUE)} \cite{determinantal_corr}. 
\item[(iii)] In the \emph{subcritical regime} (see Theorem \ref{theo:small_k}), $k \ll N^{2/3}$, we prove that the difference  $\lambda_1^{(N)}-\lambda_1^{(N-k)}$ is an asymptotically Gaussian random variable with variance of order $\sqrt{k}/N$, i.e.~much smaller than their individual fluctuation scales $N^{-2/3}$. Hence, $\lambda_1^{(N)}$ and $\lambda_1^{(N-k)}$ are strongly correlated. Moreover, we show that the top eigenvectors are almost aligned, in the sense that $ \big\langle \bm w_1^{(N)}, \bm w_1^{(N-k)} \big\rangle \approx 1$. 
\end{itemize}

We call this phase transition the \emph{decorrelation transition}, inspired by  the discovery of the 
critical level $k\sim N^{2/3}$  by Forrester and Nagao \cite{determinantal_corr} for the \emph{GUE minor process};
see Section~\ref{subsec:related} below. 
\subsection{Related works} \label{subsec:related} We now discuss several related problems studied in the literature. 

\subsubsection{The GUE minor process} 
Similarly to the single-level case, historical studies start from the complex Gaussian case, i.e., GUE, due to its accessible determinantal nature.
The GUE minor process is first studied by Baryshnikov in \cite{Bar01}, where it is shown that the largest eigenvalues of the GUE minors have the same law as the Brownian last passage percolation. It is then proved in \cite{GUE_minor_det} that the GUE minor process is determinantal, and the authors established a connection with the random tilings
of the Aztec diamond. We also refer to \cite{gorin2023six} for a connection between the six vertex model and the GUE minor process. The GUE minor process with external sources is considered in \cite{Adl_percol}. The scaling limit of the local statistics of GUE minor process and their extensions to other invariant ensembles have been studied in \cite{determinantal_corr, Bou09, metcalfe2013universality, Gor15, Gor_dbm, Naj_bead}, in both edge and bulk regimes. We also refer to \cite{adler2014dyson, Warren_dbm, Gor15, Gor_dbm, Ferrar_GUE} for a study of GUE minor process from a dynamical point of view. In addition, recent studies on the covariance estimates and universality of  eigengaps between two successive levels of GUE minor process in the bulk regime turn out to be the key technical inputs for  the analysis of random hives; see \cite{narayanan2024sums, tao2024distribution, narayanan2025limit}.  In the edge regime, the scaling limit of GUE minor process with external sources plays a fundamental role in the distribution of the eigenvectors of spiked GUE in the critical regime of BBP transition; see 
\cite{bao2022eigenvector}. \nc

Extending the result from Gaussian model to the generally distributed model and proving the universality for Wigner minor process in various local regimes, is first done in Huang's recent paper \cite{Huang_minor}. Especially, in \cite{Huang_minor},  the universality of the minor process is proved in the edge regime, when the ranks of the minors differ by a fixed number. Roughly speaking, in this case, up to the first order, the fluctuation of the  leading eigenvalues of level $n$  synchronizes with that of  level $N$ when $n=N-k$ with a fixed $k$, which is on the scale $N^{-\frac23}$ and given asymptotically by the Tracy-Widom law. The difference $\lambda_1^{(N)}-\lambda_1^{(N-k)}$ fluctuates on a much smaller scale ($\sim N^{-1}$), asymptotically given by Gamma distribution, which has been previously observed for the GUE minor process \cite{Gor_dbm}.  It is also shown in \cite{Huang_minor} that the local scaling limit for the Wigner minor process is universally given by the the bead process for general $\text{Sine}_\beta$ process in the bulk, as initially constructed for the Hermite $\beta$ ensemble in \cite{Naj_bead}. We also refer to \cite{gorin2023universal} for the edge behavior of so-called general Gaussian $\beta$ corner processes.

\subsubsection{Noise sensitivity}
Another related problem concerns the noise sensitivity of the eigenvectors of random matrices. In \cite{Bor_noise}, the authors study what will happen to the top eigenvector of a Wigner matrix if $K$ randomly chosen entries of $H^{(N)}$ are resampled  (see also \cite{bordenave2022noise} for sparse matrices)\nc. More specifically, they explore the relationship between the top eigenvector of the original Wigner matrix and the one after resampling and identify a threshold of $K\sim N^{5/3}$. In particular, the following phase transition is observed in \cite{Bor_noise}: (i) If $K\ll N^{5/3}$, the two top eigenvectors are almost colinear; (ii) If $K\gg N^{5/3}$, the two top eigenvectors are almost orthogonal. We note that the threshold here matches
 that found by Forrester and Nagao for the minor process, since $N^{2/3}$ columns/rows consist of $N^{5/3}$ entries.
  A first-order estimate on the difference between the largest eigenvalues of two matrices before and after the resampling is also provided in \cite{Bor_noise}. However, the much more delicate  relation between the two top eigenvalues  at the fluctuation level remain unexplored for this model. We expect that our approach developed for the minor process can be adapted to the resampling model as well.

\subsubsection{Law of the fractional logarithm} As the last related problem, we mention a version of the law of iterated logarithm 
for the largest eigenvalue of the Wigner matrix. More precisely, 
in \cite{PZ_LIL} and \cite{baslingker2024paquette}, a \emph{law of fractional logarithm (LFL)} for the largest eigenvalue of GUE is established, where a key ingredient is the asymptotic coupling and decoupling of the tail events of $\lambda_1^{(N)}$ and $\lambda_1^{(n)}$ in the sub- and supercritical regimes, respectively. In the decorrelation
 transition studied in this paper, we consider the relationship between the two largest eigenvalues on their typical fluctuation scale. In a separate paper \cite{LFL}, we extend these
 relations to the small/moderate deviation regime and prove LFL 
 for the largest eigenvalue of a generally distributed Wigner matrix.

\subsection{Ideas of the proofs} We now describe our proof strategy used to prove the \emph{decorrelation transition}. Each of the three regimes (subcritical, critical, and supercritical) uses quite different techniques; for this reason, we now explain the approach in each regime separately.

In the subcritical regime ($k \ll N^{2/3}$), using perturbation theory, we employ
a recursion formula which gives an expression of the top eigenvalues of $H^{(n)}$ for $N-n \ll N^{2/3}$ in terms of averages of its eigenvectors and eigenvalues of its minor $H^{(n-1)}$ obtained by removing one row and one column. 
 Using the expression recursively, we can then write $N(\lambda_1^{(N)}-\lambda_1^{(N-k)})-k$ as a sum of $k$ martingale differences. Once this is done,  the desired result follows by a martingale CLT. To ensure this recursive argument can continue, we first need to prove that the level repulsion event for $\lambda_1^{(n)}-\lambda_2^{(n)}$ can be maintained during the iteration process from $n=N$ to $n=N-k$, as long as $k\ll N^{2/3}$. This also requires an iteration to prove, with the aid of the maximal inequality for martingale.  The same approach works to show the Gaussianity of other top eigenvalues $N(\lambda_i^{(N)}-\lambda_i^{(N-k)})-k$, for indices $i$ of order one. Nevertheless, our proof of the level repulsion across the iteration for index $i$ requires the level repulsion for indices smaller than $i$. This is the technical reason why we stop the extension at a fixed $i$. We then make one step further by finding an explicit expression for the eigenvectors ${\bm w}_i^{(n)}$ of $H^{(n)}$ and show that in the regime $N-n \ll N^{2/3}$ they are asymptotically aligned, i.e. $\langle {\bm w}_i^{(n)}, {\bm w}_i^{(n-1)}\rangle=1+O(N^{-2/3})$. Iterating this $k$ times and using that $k \ll N^{2/3}$, we conclude $\langle {\bm w}_i^{(N)}, {\bm w}_i^{(N-k)}\rangle\approx 1$. 

In the critical regime $k\sim N^{2/3}$  we show that the joint distribution of the top eigenvalues of $H^{(N)}$ and its minor $H^{(N-k)}$ is universal (in both real symmetric and complex Hermitian symmetry class), and it is given by the Forrester--Nagao distribution \cite{determinantal_corr} in the complex Hermitian case (see Theorem~\ref{theo:crit_k} for the explicit distribution). To do this we use a fairly standard long time Green's function comparison theorem (GFT), i.e.~we embed our Wigner matrix $H$ into an Ornstein--Uhlenbeck flow, and show that along this flow the joint statistics of the eigenvalues of $H^{(N)}$ and its minor $H^{(N-k)}$ are (asymptotically) unchanged.

Lastly, we discuss the more substantial supercritical regime $k\gg N^{2/3}$. In this regime the main ingredient is the almost orthogonality of the eigenvectors ${\bm w}_i^{(N)}$ and ${\bm w}_i^{(N-k)}$ for $k\gg N^{2/3}$. More precisely, we first show that
\begin{equation}
\label{eq:ovbound}
\big|\langle {\bm w}_i^{(N)}, {\bm w}_i^{(N-k)}\rangle\big|\ll 1,
\end{equation}
by relying on (an extension of) the recent two-resolvents local laws \cite[Theorem 3.2]{eigenv_decorr}, and then we use the \emph{Dyson Brownian Motion (DBM)} to show that this implies the independence of the eigenvalues of $H^{(N)}$ and its minor. We now explain in more detail both parts. 

We notice that to give an upper bound on $\big|\langle {\bm w}_i^{(N)}, {\bm w}_i^{(N-k)}\rangle\big|$ it is in fact enough to bound\footnote{ Here, we view $H^{(N-k)}$ to be an $N\times N$ matrix by adding $k$ rows and columns consisting of all zeros.}
\begin{equation}
\label{eq:2Glaw}
\langle \Im (H^{(N)}-z_1)^{-1}\Im (H^{(N-k)}-z_2)^{-1}\rangle\ll \frac{1}{|\Im z_1|}\wedge \frac{1}{|\Im z_2|},
\end{equation}
for appropriately chosen spectral parameters $z_1,z_2\in\C\setminus\R$ with imaginary part proportional to the local spacing of the eigenvalues. Objects as in \eqref{eq:2Glaw} have been largely studied in random matrix theory in the last few years under the name of \emph{multi--resolvent local laws}; see for example \cite{macroCLT_complex, Multi_res_llaws, cipolloni2023eigenstate, Cipolloni_meso, ding2024eigenvector, lin2024eigenvector}. To prove \eqref{eq:2Glaw} we rely on the \emph{Zigzag strategy} (see \cite{cipolloni2023eigenstate, Cipolloni_meso}) which consists of first proving \eqref{eq:2Glaw} for matrices with a fairly large Gaussian component using the \emph{method of characteristics} (i.e. by studying the evolution of the resolvent along a carefully chosen flow), and then remove this Gaussian component by an additional GFT argument. We point out that the \emph{method of characteristics} in the context of studying local properties of resolvents of random matrices was first used in \cite{Lee_edge} to prove edge universality of deformed Wigner matrices. Later, this idea was used to prove eigenvalue rigidity and universality in \cite{HL_rig, Bou_extreme}. Lastly, this approach was extended to study products of resolvents only more recently in \cite{bourgade2022liouville, Cipolloni_meso} and in subsequent papers. Once the bound \eqref{eq:ovbound} is proven, we can readily conclude the independence of the eigenvalues of $H^{(N)}$ and its minor by analyzing weakly correlated DBMs following the approach used in \cite{macroCLT_complex}.

\subsection{Organization of the paper}  We now briefly discuss how the rest of the paper is organized. In Section~\ref{sec:mainres} we state our main results. Then, in Section~\ref{sec:supercrit} we present the main steps for the proof of the asymptotic independence in the supercritical regime. Several technical results used in this section are proven in Sections~\ref{sec:overlap}--\ref{sec:GFT}. In particular, in Section~\ref{sec:GFT} we present a Green's function comparison (GFT) argument to show the density of matrices with a fairly large Gaussian component in the class of Wigner matrices. This GFT argument is then also used to show universality in the critical regime. Finally, in Section~\ref{sec:subcrit} we prove the strong dependence of the edge eigenvalues in the subcritical regime, using a martingale Central Limit Theorem. Several auxiliary results and proofs are deferred to two appendices.

\subsection*{Notations}
For positive quantities $f,g$ we write $f\lesssim g$ (or $f=\mathcal{O}(g)$) and $f\sim g$ if $f \le C g$ or $c g\le f\le Cg$, respectively, for some constants $c,C>0$ which only depend on the constants appearing in the moment condition (see Assumption \ref{ass:momass}). We also frequently use the notation $f \ll g$, which means that $f/g \to 0$ as $N\to \infty$.

For any natural number $n$ we set $[n]: =\{ 1, 2,\ldots ,n\}$. Matrix entries are indexed by lowercase Roman letters $a, b, c, ...$ from the beginning of the alphabet. We denote vectors by bold-faced lowercase Roman letters ${\bm x}, {\bm y}\in\C ^N$, or lower case Greek letters $\psi, \phi \in \C^N$, for some $N\in\N$. Vector and matrix norms, $\lVert {\bm x}\rVert$ and $\lVert A\rVert$, indicate the usual Euclidean norm and the corresponding induced matrix norm. For any $N\times N$ matrix $A$ we use the notation $\langle A\rangle:= N^{-1}\mathrm{Tr}  A$ for its normalized trace. Moreover, for vectors ${\bm x}, {\bm y}\in\C^N$ we denote their scalar product by $\langle {\bm x},{\bm y}\rangle:= \sum_{i} \overline{x}_i y_i$. 

Finally, we use the concept of ``with very high probability'' \emph{(w.v.h.p.)} meaning that for any fixed $C>0$, the probability of an $N$-dependent event is bigger than $1-N^{-C}$ for $N\ge N_0(C)$. Moreover, even if not explicitly stated, every estimate involving $N$ is understood to hold for $N$ being sufficiently large. \nc  We also introduce the notion of \emph{stochastic domination} (see e.g.~\cite{loc_sc_gen}): given two families of non-negative random variables
\[
X=\left(X^{(N)}(u) : N\in\N, u\in U^{(N)} \right) \quad \mathrm{and}\quad Y=\left(Y^{(N)}(u) : N\in\N, u\in U^{(N)} \right)
\] 
indexed by $N$ (and possibly some parameter $u$  in some parameter space $U^{(N)}$), 
we say that $X$ is stochastically dominated by $Y$, if for all $\xi, C>0$ we have 
\begin{equation}
	\label{stochdom}
	\sup_{u\in U^{(N)}} \mathbf{P}\left[X^{(N)}(u)>N^\xi  Y^{(N)}(u)\right]\le N^{-C}
\end{equation}
for large enough $N\ge N_0(\xi,C)$. In this case we use the notation $X\prec Y$ or $X= \mathcal{O}_\prec(Y)$.

\section{Main results}
\label{sec:mainres}

Let $H=(h_{ij})_{i,j=1}^N$ be either a real symmetric or a complex Hermitian Wigner matrix, i.e. the entries of $H$ are independent (up to the symmetry constraint) having distribution
\begin{equation*}
h_{ii}\stackrel{d}{=} \frac{1}{\sqrt{N}}\chi_\dif, \qquad h_{ij}\stackrel{d}{=}\frac{1}{\sqrt{N}}\chi_{{\rm od}},\quad i>j.
\end{equation*} 
On the ($N$-independent)
random variables $\chi_\dif \in \R$, $\chi_{\mathrm{od}}\in \C$ we formulate the following assumptions.
\begin{assumption}
\label{ass:momass}
Both $\chi_\dif$, $\chi_{\mathrm{od}}$ are centered $\E\chi_\dif=\E \chi_{\mathrm{od}}=0$ and have unit variance 
$\E \chi_\dif^2=  \E|\chi_{\mathrm{od}}|^2=1$. In the complex case we also assume that $\E\chi_{\mathrm{od}}^2=0$. Furthermore, we assume the existence of high moments, i.e. for any $p\in\N$ there exists a constant $C_p>0$ such that
\begin{equation}
\label{eq:momass}
\E\big[|\chi_\dif|^p+|\chi_{\mathrm{od}}|^p\big]\le C_p.
\end{equation}
\end{assumption}
For $k\in [N]$, let $H^{[k]}$ be the minor of $H$ formed by first $k$ rows and first $k$ columns. Similarly, let $H^{(N-k)}$ denote the minor formed by the last $N-k$ rows and the last $N-k$ columns, i.e
\begin{equation} \label{eq:minordef}
H = \begin{pmatrix}
H^{[k]} & X_k^*\\
X_k& H^{(N-k)}
\end{pmatrix}.
\end{equation} 
Here $X_k$ is a block of size $(N-k)\times k$. Furthermore, we denote the eigenvalues of $H^{(N-k)}$ by $\lbrace\lambda_j^{(N-k)}\rbrace_{j=1}^{N-k}$ in decreasing order and the corresponding $\ell^2$-normalized eigenvectors are denoted by $\bw_j^{(N-k)}\in\C^{N-k}$. With a slight abuse of notation, we will identify 
\begin{equation} \label{eq:identify}
\C^{N-k}\ni  \bm w_j^{(N-k)} \quad \leftrightarrow \quad  (\bm 0, \bm w_j^{(N-k)}) \in \C^N \,, 
\end{equation}
where we augmented the original vector with $k$ zeros from the top to form a vector in $\C^N$, and denote $(\bm 0, \bm w_j^{(N-k)})$ by $\bm w_j^{(N-k)}$ as well.  For $k=0$ we sometimes denote $H$ by $H^{(N)}$.

Returning to the eigenvalues, note that $\lambda_1^{(N)}$ is close to $2$, while $\lambda_1^{(N-k)}$ is close to $2(N-k)^{1/2}N^{-1/2}$, both with a fluctuation given by the celebrated Tracy-Widom law \cite{TW_level-spacing, TW_Orth_ME}, on the scale $N^{-2/3}$ and $(N-k)^{-2/3}$ respectively. We will consider the regime where $k\ll N$, and thus the fluctuation scale is $N^{-2/3}$ to leading order in both cases. While the distributions of  $\lambda_1^{(N)}$ and $\lambda_1^{(N-k)}$ separately are well known, we now study their correlations, i.e. joint distribution. Moreover, we consider the overlap $\big\langle \bm w_1^{(N)}, \bm w_1^{(N-k)} \big\rangle $ of the top eigenvectors of the original matrix and its minor.

\subsection{Main results: Decorrelation transition}
\label{sec:FNphaset} We study three different regimes: 
\begin{itemize}
\item[(i)] the \emph{supercritical regime} with $k\gg N^{2/3}$, cf.~Theorem \ref{theo:large_k} in Section \ref{subsec:supercrit}; 
\item[(ii)] the \emph{critical regime} with $k \sim N^{2/3}$, cf.~Theorem \ref{theo:crit_k} in Section \ref{subsec:crit};
\item[(iii)] the \emph{subcritical regime} with $k \ll N^{2/3}$, cf.~Theorem \ref{theo:small_k} in Section \ref{subsec:crit}.
\end{itemize}

\subsubsection{Supercritical regime} \label{subsec:supercrit}We begin with the supercritical regime, where we prove that (a) the top eigenvalues $\lambda_1^{(N)}$ and $\lambda_1^{(N-k)}$, 
appropriately rescaled,  become asymptotically independent in the large $N$ limit, and (b) the top eigenvectors become asymptotically orthogonal in the large $N$ limit. This is the statement of the following theorem.
\begin{theorem}[Supercritical regime: Asymptotic independence]\label{theo:large_k} Fix a (small) constant $\varepsilon_0>0$. Let $H$ be a Wigner matrix satisfying Assumption \ref{ass:momass} and let $k\in\N$ be such that $N^{2/3+\varepsilon_0}\le k\le N^{1-\varepsilon_0}$. Then we have the following: 
	\begin{itemize}
\item[(a)] \textnormal{[Eigenvalues]}	For any smooth compactly supported test-functions $F,G:\R\to\R$
it holds that
\begin{equation}
	\begin{split}
		&\E \left[F\big((\lambda_1^{(N)}-2)N^{2/3}\big) G\big((\lambda_1^{(N-k)}-2(1-k/N)^{1/2})(N-k)^{2/3}\big)\right]\\
		&\quad = \E \left[F\big((\lambda_1^{(N)}-2)N^{2/3}\big)\right] \E \left[G\big((\lambda_1^{(N-k)}-2(1-k/N)^{1/2})(N-k)^{2/3}\big)\right] + \mathcal{O}\left(N^{-c}\right)
	\end{split}
	\label{eq:main}
\end{equation}
for some constant $c>0$ which does not depend on $N$.
\item[(b)] \textnormal{[Eigenvectors]} Using the identification \eqref{eq:identify}, it holds that 
\begin{equation}
\big| \big\langle \bm w_1^{(N)}, \bm w_1^{(N-k)}\big\rangle \big|^2 \prec \frac{N^{2/3}}{k} \le N^{-\varepsilon_0} \,, 
\end{equation}
i.e.~$\bm w_1^{(N)}$ and $ \bm w_1^{(N-k)}$ are almost orthogonal. 
	\end{itemize}
\end{theorem}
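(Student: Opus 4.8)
\textbf{Proof proposal for Theorem \ref{theo:large_k}.}

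The plan is to treat parts (b) and (a) in that order, since the eigenvector orthogonality bound is the key analytic input and the eigenvalue independence follows from it by a dynamical argument. For part (b), I would first reduce the overlap to a two-resolvent quantity. Writing $G_1(z) := (H^{(N)}-z)^{-1}$ and $G_2(z) := (H^{(N-k)}-z)^{-1}$ (the latter viewed as an $N\times N$ matrix with $k$ zero rows/columns, as in the footnote), spectral decomposition gives
\begin{equation*}
\big|\langle \bm w_1^{(N)}, \bm w_1^{(N-k)}\rangle\big|^2 \lesssim \eta_1 \eta_2 \, \langle \Im G_1(z_1)\, \Im G_2(z_2)\rangle
\end{equation*}
for spectral parameters $z_j = E_j + \mathrm{i}\eta_j$ with $E_1$ near $2$, $E_2$ near $2(1-k/N)^{1/2}$, and $\eta_j$ chosen slightly above the local eigenvalue spacing at the edge, i.e.~$\eta_j \sim N^{-2/3+\delta}$. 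Here one uses that $\Im G_j(E_j+\mathrm{i}\eta_j) \gtrsim \eta_j^{-1} |\bm w_1^{(\cdot)}\rangle\langle \bm w_1^{(\cdot)}|$ once $\lambda_1^{(\cdot)}$ is within $\eta_j$ of $E_j$, which holds w.v.h.p.~on a suitable event by edge rigidity; the contribution of the complementary event is negligible. Thus the target \eqref{eq:ovbound} reduces to the two-resolvent local law \eqref{eq:2Glaw}, which after multiplying by $\eta_1\eta_2$ yields $\eta_1\eta_2 \langle \Im G_1 \Im G_2\rangle \lesssim \eta_1 \wedge \eta_2 \sim N^{-2/3+\delta}$, giving the overlap bound $\prec N^{-2/3+\delta}/\eta \sim N^{\delta}/N^{2/3} \cdot (\text{spacing ratio})$; tracking the powers carefully produces the claimed $N^{2/3}/k$.

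The bulk of the work is therefore establishing the multi-resolvent local law \eqref{eq:2Glaw} for the pair $(H^{(N)}, H^{(N-k)})$. The subtlety compared to \cite[Theorem 3.2]{eigenv_decorr} is that $H^{(N-k)}$ is not a small additive deformation of $H^{(N)}$ but a minor; nonetheless the difference $H^{(N)} - H^{(N-k)}$ (as $N\times N$ matrices) is a matrix of rank $\le 2k$, and more importantly the relevant ``distance'' parameter governing decorrelation is $k/N$, exactly the relative size of the removed block. I would implement the \emph{Zigzag strategy}: (1) add an Ornstein--Uhlenbeck / Dyson-type Gaussian component of size $t \gg N^{-1/3}$ (say) to $H$, and propagate the two-resolvent bound along the flow via the \emph{method of characteristics}, controlling the self-consistent equation for $\langle \Im G_1 \Im G_2\rangle$ and its isotropic/entrywise versions; the flow contracts the error while the deterministic approximation is given by the relevant two-body stability operator, whose norm is controlled precisely because $E_1$ and $E_2$ sit at \emph{different} edges separated by $\sim k/N$; (2) remove the Gaussian component by the GFT argument of Section~\ref{sec:GFT}, showing the two-resolvent quantity changes by $o(1)$ under the replacement. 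One has to be careful that the minor structure is preserved under the OU flow — it is, since adding independent Gaussians to all entries keeps the block decomposition \eqref{eq:minordef} of the correct form with an OU-evolved $H^{(N-k)}$.

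For part (a), having \eqref{eq:ovbound}, I would run the \emph{weakly correlated DBM} argument of \cite{macroCLT_complex}: couple $H^{(N)}$ and $H^{(N-k)}$ (augmented to $N\times N$) to two Dyson Brownian motions driven by Brownian motions whose cross-covariance is governed by the overlap of the two matrices' spectral data; the near-orthogonality of the top eigenvectors, together with edge rigidity, forces the cross-correlation of the \emph{driving noise felt by the top eigenvalues} to be $\ll 1$ over the relevant time scale $t \sim N^{-1/3}$. One then runs the standard edge universality / local relaxation analysis for each DBM separately, showing each top eigenvalue converges to an independent Tracy--Widom variable, and the asymptotic factorization \eqref{eq:main} follows by comparing the coupled flow to the fully independent one and estimating the discrepancy by $\mathcal{O}(N^{-c})$. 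I expect the main obstacle to be step (1) of the Zigzag — setting up the self-consistent flow equation for the two-resolvent chain in a way that (i) respects the non-self-adjoint minor deformation, (ii) tracks the sharp $k/N$ dependence in the stability operator (rather than a crude bound), and (iii) controls the stochastic error terms uniformly down to $\eta \sim N^{-2/3+\delta}$ at the spectral edge, where fluctuations are largest; this is where the extension of \cite[Theorem 3.2]{eigenv_decorr} genuinely requires new estimates rather than a citation.
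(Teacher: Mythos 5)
Your high-level architecture matches the paper's: eigenvector near-orthogonality from a two-resolvent bound, followed by a weakly correlated DBM argument for the eigenvalue independence. But there is a genuine gap in the core of part~(b), namely the mechanism producing the sharp $k$-dependent decay, and a smaller omission in part~(a).

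On part~(b): the displayed reduction and bound are quantitatively insufficient. With the paper's convention $\langle A\rangle=N^{-1}\mathrm{Tr}\,A$, the spectral decomposition gives $|\langle\bm w_1^{(N)},\bm w_1^{(N-k)}\rangle|^2\lesssim N\eta_1\eta_2\langle\Im G_1\Im G_2\rangle$ (you are missing a factor of $N$), and the naive bound from \eqref{eq:2Glaw}, $\langle\Im G_1\Im G_2\rangle\lesssim 1/\eta$, yields only $N\eta\sim N^{1/3+\delta}$ — which is useless and carries no $k$-dependence. The estimate \eqref{eq:2Glaw} in the introduction is a heuristic sufficient condition for decorrelation; the proof actually requires the sharper, $k$-dependent bound $\langle\Im G_1\Im G_2\rangle\prec N/k$, and you acknowledge this gap (``tracking the powers carefully'') without supplying the mechanism. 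More fundamentally, you propose to prove the two-resolvent law directly for the pair $(H^{(N)},H^{(N-k)})$, treating the minor relation as a rank-$\lesssim k$ perturbation or a ``non-self-adjoint minor deformation'' to be confronted inside the characteristic flow. The paper instead reduces the problem before any flow analysis: the Schur complement formula for the lower-right $(N-k)\times(N-k)$ block of $(H^{(N)}-z_1)^{-1}$ gives
\[
\langle\Im(H^{(N)}-z_1)^{-1}\Im F(z_2)\rangle
=\frac{1}{N}\mathrm{Tr}\Big[\Im\big(H^{(N-k)}-D_k-z_1\big)^{-1}\Im\big(H^{(N-k)}-z_2\big)^{-1}\Big]
\]
with $D_k=X_k(H^{[k]}-z_1)^{-1}X_k^*$ as in \eqref{eq:D_def}. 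Conditionally on the removed block this is a two-resolvent chain for the \emph{single} Wigner matrix $H^{(N-k)}$ with two \emph{deterministic} deformations ($\Re D_k$ and $0$), once a short auxiliary lemma shows $0\le\Im D_k\lesssim\eta_1$ so that $\Im D_k$ can be absorbed into $\Im z_1$ by a two-sided resolvent comparison. This places the problem in the existing framework of \cite{eigenv_decorr} for differently deformed Wigner matrices, where the control parameter $\gamma\sim\langle(\Re D_k-\langle\Re D_k\rangle)^2\rangle\sim k/N$ appears automatically, and the only genuinely new analytic input is extending that two-resolvent law from the bulk to the regular edge. Without the Schur reduction you would be stability-analyzing a self-consistent flow equation for resolvents of two \emph{partially correlated} random matrices (sharing the $(N-k)$-block), whose It\^o cross-terms along the OU flow do not have the clean structure of the deformed-Wigner case; this is a substantially harder and genuinely different problem. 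The non-self-adjointness is not, as you frame it, an obstacle for the flow — it is removed up front.

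On part~(a): after running the weakly correlated DBM for time $t\sim N^{-1/3+\varepsilon}$, the factorization holds for the Gaussian-divisible matrix $H_t$, and a separate long-time Green function comparison for the \emph{joint eigenvalue} statistics of $H^{(N)}_t$ and $H^{(N-k)}_t$ is still needed to transfer the conclusion back to $t=0$. You invoke GFT only in the context of removing the Gaussian from the two-resolvent law, which is a different step; the eigenvalue GFT (the paper's Proposition \ref{prop:GFTeigenval}) must be stated and carried out as well.
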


We formulated Theorem \ref{theo:large_k}~(a) only for the top eigenvalue, but our proof also gives a similar result for the joint distribution of any $p$-tuple of top eigenvalues, i.e.~the collection of  appropriately
rescaled $p$-tuples $( \lambda_{i_1}^{(N)},   \ldots \lambda_{i_p}^{(N)})$
and $( \lambda_{j_1}^{(N-k)},   \ldots \lambda_{j_p}^{(N-k)})$
are asymptotically independent if $i_1,  \ldots i_p, j_1,  \ldots, j_p\le N^\varepsilon$. Similarly, also the top eigenvectors considered in Theorem \ref{theo:large_k}~(b) are asymptotically orthogonal, i.e.~$\big| \big\langle \bm w_j^{(N-k)}, \bm w_i^{(N)}\big\rangle\big| \ll 1$ for $i,j \le N^\varepsilon$. 

\subsubsection{Critical regime} \label{subsec:crit} Next, we turn to the critical regime, where we prove that the joint 
one-point correlation function  of $H^{(N)}$ and $H^{(N-k)}$ near the edge is universal. 
We expect that the corresponding overlap $\big| \big\langle \bm w_1^{(N)}, \bm w_1^{(N-k)}\big\rangle \big|^2$
also has a nontrivial and universal distribution.  However, since the limiting distribution 
is not known, we do not pursue this question further   in this paper. \nc

\begin{theorem}[Critical regime: Nontrivial joint distribution.]\label{theo:crit_k} Let $H$ be a complex Hermitian Wigner matrix satisfying Assumption \ref{ass:momass}. Fix a constant $\alpha>0$ and set $k:=\lfloor\alpha N^{2/3}\rfloor$.
Denote the joint one-point correlation function 
 of the eigenvalues $\{ \lambda_j^{(N)}\}_{j=1}^N$ of $H=H^{(N)}$ and $\{ \lambda_j^{(N-k)}\}_{j=1}^{N-k}$ of 
 $H^{(N-k)}$ by $p_\alpha^{(N)}:\R^2\to\R$, i.e.~it is  defined by the identity 
 $$
   \E \frac{1}{N(N-k)} \sum_{i,j} f(\lambda_i^{(N)}, \lambda_j^{(N-k)}) = \int_{\R}\int_\R p_\alpha^{(N)}\left(x,y\right) f(x,y) \dif x \dif y
$$ 
for any smooth compactly supported function $f$. 
For any parameters $X, Y\in\R$ introduce the edge-scaling  
\begin{equation*}
x:= 2+N^{-2/3}X,\quad y:= 2(1-k/N)^{1/2} + N^{-1/2} (N-k)^{-1/6} Y.
\end{equation*}
Then, in the sense of weak convergence, we have
\begin{equation*}
\lim_{N\to\infty} N^{2/3} p_\alpha^{(N)}\left(x,y\right) =4\det
\begin{pmatrix}
 \int_0^{+\infty} \mathrm{Ai}(X+u)\mathrm{Ai}(X+u)\dif u& \!\!\!\!\!\!\int_0^{+\infty} \ee^{-\alpha u} \mathrm{Ai}(X+u)\mathrm{Ai}(Y+u)\dif u\\
 &\\
 -\int_{-\infty}^{0} \ee^{\alpha u} \mathrm{Ai}(X+u)\mathrm{Ai}(Y+u)\dif u& \!\!\!\!\!\!\int_0^{+\infty} \mathrm{Ai}(Y+u)\mathrm{Ai}(Y+u)\dif u
\end{pmatrix},
\end{equation*}
where $\mathrm{Ai}$ is the Airy function.
\end{theorem}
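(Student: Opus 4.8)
\textbf{Proof plan for Theorem \ref{theo:crit_k}.}
The plan is to prove the statement in two stages: first establish the explicit formula for the GUE minor process itself (the Gaussian reference model), and then transfer it to a general complex Hermitian Wigner matrix by a Green's function comparison (GFT) argument along an Ornstein--Uhlenbeck flow. For the Gaussian case, the formula is already available in the work of Forrester and Nagao \cite{determinantal_corr}: the GUE minor process is determinantal with an explicit extended kernel, and its edge scaling limit at the critical rank difference $k=\lfloor\alpha N^{2/3}\rfloor$ produces exactly the $2\times 2$ determinant of Airy-type integrals on the right-hand side. Concretely, the joint one-point correlation function of $\{\lambda_i^{(N)}\}$ and $\{\lambda_j^{(N-k)}\}$ is, up to the $1/(N(N-k))$ normalization and the $N^{2/3}$ rescaling factor, a sum over pairs which collapses to a $2\times 2$ Fredholm-type determinant built from the two-level edge kernel; taking $N\to\infty$ with the stated edge scaling of $x$ and $y$ and using the uniform convergence of the (rescaled) Hermite kernel to the Airy kernel on the relevant regions yields the four displayed entries, with the off-diagonal $e^{\pm\alpha u}$ weights coming precisely from the $k\sim\alpha N^{2/3}$ offset between the two levels. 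I would cite \cite{determinantal_corr} for this computation rather than reproduce it, checking only that the normalization conventions (variance $1/N$, ordering, the factor $4$) match ours.

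The substantive new content is the universality transfer. I would embed $H$ into the OU flow $\dif H_t = -\tfrac12 H_t\,\dif t + \dif B_t$ (with $\dif B_t$ a matching Wigner-type Brownian increment), so that $H_t \stackrel{d}{=} e^{-t/2}H_0 + \sqrt{1-e^{-t}}\,G$ with $G$ a GUE matrix, and note that the minor structure is preserved: the last $N-k$ rows and columns of $H_t$ form exactly $(H_t)^{(N-k)}$, which is the OU flow run on $H^{(N-k)}$ driven by the corresponding sub-block of $B$. Choosing $t = N^{-\delta}$ for a small $\delta>0$, the matrix $H_{t}$ has a Gaussian component of size $\sqrt{t}$, which is large enough for the Gaussian-divisible comparison but small enough that $H_t$ is still within $N^{-\delta/2}$ of $H_0$ in the relevant sense. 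The goal is then to show that the test-function expectation
\begin{equation*}
\E\,\frac{1}{N(N-k)}\sum_{i,j} f\!\left(N^{2/3}(\lambda_i^{(N)}(H_t)-2),\, N^{1/2}(N-k)^{1/6}(\lambda_j^{(N-k)}(H_t)-2(1-k/N)^{1/2})\right)
\end{equation*}
changes by $o(1)$ along the flow from $t=0$ to $t = N^{-\delta}$, after which a second GFT step (or directly \cite{determinantal_corr} applied to Gaussian-divisible ensembles, which have the same local statistics as GUE) identifies the $t>0$ value with the GUE answer. This last point --- that the short-time Gaussian-divisible minor process already has the Forrester--Nagao limit --- can itself be obtained either by the known result that Gaussian-divisible Wigner matrices have GUE edge statistics jointly for a matrix and its minor, or by running the comparison all the way; I would use the GFT argument of Section~\ref{sec:GFT} of the present paper, which the authors explicitly state is designed to produce matrices with a large Gaussian component within the Wigner class and is also used for exactly this critical-regime universality.

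The GFT itself is the standard Lindeberg/resolvent-expansion scheme: express the rescaled edge eigenvalue counting statistics on both levels through smoothed resolvent functionals $G^{(N)}(z_1) = (H^{(N)}-z_1)^{-1}$ and $G^{(N-k)}(z_2) = (H^{(N-k)}-z_2)^{-1}$ with spectral parameters at imaginary part $\eta \sim N^{-2/3-\epsilon}$ near the two edges $2$ and $2(1-k/N)^{1/2}$, replace matrix entries one at a time (coupling the entry in the $(N-k)\times(N-k)$ block so that it is simultaneously replaced in $H^{(N)}$ and in $H^{(N-k)}$, while entries in the first $k$ rows/columns affect only $H^{(N)}$), and Taylor-expand to fourth order. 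The leading three moments match by Assumption \ref{ass:momass} (the complex Hermitian case with $\E\chi_{\mathrm{od}}^2 = 0$ is exactly what makes the third-moment terms vanish), and the fourth-order remainder is controlled by the single-resolvent local law near the edge applied to \emph{both} $H^{(N)}$ and $H^{(N-k)}$ separately --- crucially, no two-resolvent local law is needed here because the joint one-point function factorizes into a product of traces over the two blocks at the level of each resolvent expansion term, so the two levels are coupled only through the shared randomness of the entries, not through a joint resolvent chain. The main obstacle I anticipate is bookkeeping rather than conceptual: one must handle the fact that the two edges $2$ and $2(1-k/N)^{1/2}$ are separated by a macroscopic-on-the-fluctuation-scale gap $\sim k/N \sim \alpha N^{-1/3}$ (not tiny compared to $N^{-2/3}$), so the edge rigidity and local law inputs for $H^{(N-k)}$ must be stated at the shifted edge with the correct $(N-k)^{-2/3}$-scale fluctuations, and the rescaling factor $N^{1/2}(N-k)^{1/6}$ in the $y$-variable (rather than $(N-k)^{2/3}$) must be reconciled with the density of states of $H^{(N-k)}$ near its edge --- a short computation shows these agree to leading order since $k/N \ll 1$. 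A secondary technical point is ensuring the GFT error from the time increment $t = N^{-\delta}$ truly is $o(1)$ on the $N^{2/3}$-rescaled one-point function; this requires $\delta$ small and a stability estimate for the edge eigenvalue locations under the OU perturbation, which follows from the $\|\sqrt{t}\,G\|$ bound and standard eigenvalue perturbation inequalities.
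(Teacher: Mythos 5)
Your proposal is essentially correct and follows the same two-stage scheme the paper uses: invoke the Forrester--Nagao formula \cite[Eq.\ (5.35), Prop.\ 5]{determinantal_corr} for GUE, and transfer to general complex Hermitian Wigner via a Green function comparison along the Ornstein--Uhlenbeck flow (Propositions \ref{prop:GFTeigenval2} and \ref{prop:GFT}). Two small remarks on where your write-up diverges from the paper's implementation. First, you set up an intermediate time $t=N^{-\delta}$ and then speak of a ``second GFT step,'' which suggests the GFT\,+\,DBM two-stage structure used for the \emph{supercritical} regime; for the critical regime the paper runs a single \emph{long-time} GFT directly from $t=0$ to $t=\infty$ (split at $T=100\log N$, with the tail $[T,\infty]$ handled by trivial perturbation), so the intermediate short time is unnecessary. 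Since you ultimately defer to Section~\ref{sec:GFT}, this is not a gap, just a detour. Second, you describe the comparison as a Lindeberg entry-swapping scheme, whereas the paper runs the continuous-time version via It\^o's formula and a cumulant expansion of $\frac{\dif}{\dif t}\E\langle\Im G_t^{(N)}\rangle\langle\Im G_t^{(N-k)}\rangle$; these are standard equivalent variants of the same argument and your coupling observation (entries of the bottom-right $(N-k)\times(N-k)$ block affect both resolvents, entries of the first $k$ rows/columns affect only $G^{(N)}$) is exactly the bookkeeping that makes either version work. Your observation that no two-resolvent local law is needed --- only single-resolvent laws plus Schwarz and Ward for the cross terms where the derivatives hit both trace factors --- also matches the paper's treatment in \eqref{eq:firstex}--\eqref{eq:secondex}.
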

The limiting distribution in Theorem~\ref{theo:crit_k} was explicitly computed by Forrester and Nagao for GUE matrices \cite[Eq. (5.35)]{determinantal_corr}. We also point out that \nc Theorem \ref{theo:crit_k} is stated specifically for the complex Hermitian case. This choice stems from our proof strategy which relies on comparing $H$ with a GUE matrix and applying the version of Theorem \ref{theo:crit_k} established for GUE in \cite[Proposition 5]{determinantal_corr}. However, our method extends to the real symmetric case. In fact, we prove that the joint distribution is universal in \emph{both} symmetry classes, just the explicit formula analogous to \cite[Eq. (5.35)]{determinantal_corr} is not (yet) available for GOE. Moreover, our proof shows that universality also holds for the $r$-point correlation functions at the edge regime
(for any finite $r$) even though we wrote up only the $r=1$ case. In the GUE case the process is determinantal,
so an explicit formula for the $r$-point function is  available, see  \cite[Eq. (5.35)]{determinantal_corr}. \nc

\subsubsection{Subcritical regime} \label{subsec:subcrit}Finally, we turn to the subcritical regime, where we prove that the \emph{difference} $\lambda_1^{(N)} - \lambda_1^{(N-k)}$ is approximately Gaussian with variance $\sqrt{k}/N \ll N^{-2/3}$, i.e.~much smaller than their individual fluctuation scale. This indicates the strong correlation of $\lambda_1^{(N)}$ and $\lambda_1^{(N-k)}$. Moreover, we show that the top eigenvectors are almost aligned, $\big|\big\langle \bm w_1^{(N)}, \bm w_1^{(N-k)}\big\rangle\big| \approx 1$. 

\begin{theorem}[Subcritical regime: Normal fluctuations and eigenvector alignment]\label{theo:small_k} Fix a (small) constant $\varepsilon_0>0$. Let $H$ be a Wigner matrix satisfying Assumption \ref{ass:momass}.
Then we have the following: 
\begin{itemize}
\item[(a)] \textnormal{[Eigenvalues]}  For $1\ll k\le N^{2/3-\varepsilon_0}$, \nc the difference $\lambda_1^{(N)} - \lambda_1^{(N-k)}$ is asymptotically normal\footnote{If $k = k(N) \le N^{2/3-\varepsilon_0}$ is such that $k(N)\to \infty$,  then \eqref{eq:normfluc} means that the left-hand-side~of \eqref{eq:normfluc} converges to the normal distribution $\mathcal{N}(0, 2/\beta)$ weakly, as $N \to \infty$.\label{ftn:asymptnorm}} fluctuating on scale $\sqrt{k}/N$,~i.e.
\begin{equation} \label{eq:normfluc}
\frac{N\big( \lambda_1^{(N)} - \lambda_1^{(N-k)}\big)-k}{\sqrt{k}} \sim \mathcal{N}(0,2/\beta) \,, 
\end{equation}
where $\beta=1$ in the real symmetric and $\beta=2$ in the complex Hermitian case and  $\mathcal{N}(0,2/\beta)$ is a real centered Gaussian random variable of variance $2/\beta$. 
\item[(b)] \textnormal{[Eigenvectors]}  Recalling the identification \eqref{eq:identify}, for any small  $\delta>0$ 
and any $1\le k\le N^{2/3-\varepsilon_0}$, \nc there exists a positive constant $a>0$, such that
\begin{equation*}
	\mathbf{P} \Big[0\leq 1-|\langle \bm{w}_1^{(N)}, {\bm{w}}_1^{(N-k)}\rangle|\leq k N^{-\frac23+\delta}\Big]\geq 1-N^{-a} \,, 
\end{equation*}
for $N$ large enough, i.e.~$\bm{w}_1^{(N)}$ and ${\bm{w}}_1^{(N-k)}$ are asymptotically almost aligned. 
\end{itemize}
\end{theorem}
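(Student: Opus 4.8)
The plan is to remove the rows and columns of $H$ one at a time and, at each step, to control the increment $\lambda_1^{(n)}-\lambda_1^{(n-1)}$ and the overlap $\langle\bm w_1^{(n)},\bm w_1^{(n-1)}\rangle$, and then to add (respectively compose) these over the $k$ steps. The basic tool is the Schur complement / secular identity: writing $H^{(n)}=\begin{pmatrix}a_n & (\bm b^{(n)})^*\\ \bm b^{(n)} & H^{(n-1)}\end{pmatrix}$, the eigenvalue $\lambda_1^{(n)}$ is the largest root of $\lambda-a_n+(\bm b^{(n)})^*(H^{(n-1)}-\lambda)^{-1}\bm b^{(n)}=0$ and, by interlacing, lies just above $\lambda_1^{(n-1)}$. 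Expanding the quadratic form in the eigenbasis of $H^{(n-1)}$ as $\sum_j\xi_j^{(n)}/(\lambda_j^{(n-1)}-\lambda)$, where $\xi_j^{(n)}:=|\langle\bm w_j^{(n-1)},\bm b^{(n)}\rangle|^2$, isolating the $j=1$ pole and multiplying by $\gamma_n:=\lambda_1^{(n)}-\lambda_1^{(n-1)}$ produces the one-step identity $\gamma_n c_n=\xi_1^{(n)}$, with $c_n:=\lambda_1^{(n)}-a_n-\sum_{j\ge2}\xi_j^{(n)}/(\lambda_1^{(n)}-\lambda_j^{(n-1)})$. Using edge rigidity and the isotropic local law for $H^{(n-1)}$, one checks $c_n=\sqrt{(n-1)/N}+\mathcal O_\prec(N^{-1/3+\epsilon})=1+\mathcal O(k/N)+\mathcal O_\prec(N^{-1/3+\epsilon})$, with the dependence on the fresh block of lower order; crucially this expansion of $c_n$ is justified only when $\lambda_1^{(n-1)}$ is separated from $\lambda_2^{(n-1)}$, which is why a level-repulsion event must be carried along the recursion. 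For the filtration $\mathcal F_n:=\sigma(H^{(n)})$ (genuinely increasing, since $H^{(n-1)}$ is a corner of $H^{(n)}$), the fresh block $(a_n,\bm b^{(n)})$ is independent of $\mathcal F_{n-1}$ while $\bm w_1^{(n-1)}$ is $\mathcal F_{n-1}$-measurable, hence $\E[N\xi_1^{(n)}\mid\mathcal F_{n-1}]=\|\bm w_1^{(n-1)}\|^2=1$ exactly and, by delocalization of $\bm w_1^{(n-1)}$, the second and fourth conditional moments of $\langle\bm w_1^{(n-1)},\bm b^{(n)}\rangle$ agree with the Gaussian ones (variance $1/N$) up to relative error $\mathcal O_\prec(N^{-1+\epsilon})$, so $\Var(N\xi_1^{(n)}\mid\mathcal F_{n-1})=2/\beta+\mathcal O_\prec(N^{-1+\epsilon})$.

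The first real task is to propagate level repulsion. I would prove that, with probability at least $1-N^{-a}$, the event $\mathcal L_m:=\{\lambda_1^{(m)}-\lambda_2^{(m)}\ge N^{-2/3-\epsilon}\}$ holds \emph{simultaneously} for all $m\in\{N-k,\dots,N\}$. The base case $m=N$ is the known edge level repulsion for a single Wigner matrix, which itself fails only with polynomially small probability (this is the source of the $1-N^{-a}$ in the statement). For the downward induction in $m$, interlacing confines $\lambda_1^{(m-1)}-\lambda_2^{(m-1)}$, and applying the one-step identity to indices $1$ and $2$ shows the gap drifts by $\mathcal O(1/N)$ per step, while the accumulated stochastic fluctuation is controlled by Doob's $L^2$ maximal inequality applied to the martingale part of the increments and equals $\mathcal O_\prec(\sqrt k/N)\ll N^{-2/3}$ precisely because $k\ll N^{2/3}$. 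The identity for index $2$ requires $\mathcal L$ for index $1$, which is the technical reason the extension stops at a fixed index $i$. I expect this to be the main obstacle: the error terms in the one-step identity are small only on the events $\mathcal L_m$, but those events themselves must be produced by the very recursion whose errors they govern, so the identity and the repulsion have to be carried jointly without incurring a spurious factor of $k$ — which is exactly what forces the use of martingale maximal inequalities rather than a union bound on the increments.

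For part~(a), on the above high-probability event, telescoping together with $\gamma_n c_n=\xi_1^{(n)}$ gives
\[
N\big(\lambda_1^{(N)}-\lambda_1^{(N-k)}\big)-k=\sum_{n=N-k+1}^{N}\big(N\gamma_n-1\big)=\sum_{n=N-k+1}^{N}\big(N\xi_1^{(n)}-1\big)+\mathcal R ,
\]
where $\mathcal R$ gathers (i) the deterministic drift mismatch $\sum_n\big(\sqrt{N/(n-1)}-1\big)=\tfrac{k^2}{4N}+o(\sqrt k)$, matching the second-order shift in $2N\big(1-\sqrt{1-k/N}\big)-k$; (ii) further corrections coming from the fluctuating part of $c_n$ and from $a_n$, which are martingale differences up to conditional-mean terms of size $\mathcal O_\prec(N^{-1+\epsilon})$ and have conditional variance $\mathcal O_\prec(N^{-2/3})$ per step; and (iii) higher-order remainders from inverting $\gamma_n c_n=\xi_1^{(n)}$. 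Since $k\le N^{2/3-\varepsilon_0}$ one has $k^{3/2}/N\le N^{-3\varepsilon_0/2}$, so $\mathcal R=\mathcal O_\prec\big(\sqrt k\,N^{-c}\big)$ for some $c>0$. It then remains to apply a martingale central limit theorem (McLeish/Brown) to $k^{-1/2}\sum_n\big(N\xi_1^{(n)}-1\big)$ together with the small extra martingale terms: the normalized sum of conditional variances converges to $2/\beta$ by the computation of the previous paragraph, and the Lindeberg condition follows from uniform boundedness of $\E\big[(N\xi_1^{(n)})^4\mid\mathcal F_{n-1}\big]$ and $k\to\infty$. This gives \eqref{eq:normfluc}; the identical argument with index $1$ replaced by a fixed index $i$ yields the analogue for $\lambda_i^{(N)}-\lambda_i^{(N-k)}$.

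For part~(b), the $\lambda_1^{(n)}$-eigenvector of $H^{(n)}$ equals, up to normalization, $\big(1,(\lambda_1^{(n)}-H^{(n-1)})^{-1}\bm b^{(n)}\big)$, so its squared component along $\bm w_1^{(n-1)}$ is $\xi_1^{(n)}/\gamma_n^2$ and the squared norm of the orthogonal part is $1+R_n$, where $R_n:=\sum_{j\ge2}\xi_j^{(n)}/(\lambda_1^{(n)}-\lambda_j^{(n-1)})^2$. Using $\gamma_n=\xi_1^{(n)}/c_n$ with $c_n\sim1$ and the a priori bound $\xi_1^{(n)}\gtrsim N^{-1-\epsilon}$ (so $\xi_1^{(n)}/\gamma_n^2\gtrsim N^{1-\epsilon}$), and using $\mathcal L_{n-1}$ to bound $R_n$ (its worst term, $\xi_2^{(n)}/(\lambda_1^{(n)}-\lambda_2^{(n-1)})^2$, is $\mathcal O_\prec(N^{1/3+\epsilon})$, and the rest is controlled by the local law for $\langle(\lambda_1^{(n)}-H^{(n-1)})^{-2}\rangle$), one obtains
\[
\big|\langle\bm w_1^{(n)},\bm w_1^{(n-1)}\rangle\big|^2=\frac{\xi_1^{(n)}/\gamma_n^2}{\,1+R_n+\xi_1^{(n)}/\gamma_n^2\,}=1-\mathcal O_\prec\big(N^{-2/3+\delta}\big)
\]
at each step. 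Finally I would chain these bounds from $m=N$ down to $m=N-k+1$: writing $\bm t_m$ for the component of $\bm w_1^{(m-1)}$ orthogonal to $\bm w_1^{(m)}$ (embedded in $\C^N$ via \eqref{eq:identify}), one has $\|\bm t_m\|^2=\mathcal O_\prec(N^{-2/3+\delta})$, and the $\bm t_m$ over distinct steps are approximately mutually orthogonal since they are built from the independent fresh blocks $\bm b^{(m)}$; hence their contributions add rather than compound, yielding $1-|\langle\bm w_1^{(N)},\bm w_1^{(N-k)}\rangle|^2=\mathcal O_\prec\big(kN^{-2/3+\delta}\big)$, and therefore the stated bound, on $\bigcap_m\mathcal L_m$, i.e.\ with probability at least $1-N^{-a}$. (A naive triangle-type estimate for the overlaps would only yield $k^2N^{-2/3}$, so exploiting the near-independence across steps is essential here.)
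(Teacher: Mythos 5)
Your proposal follows essentially the same route as the paper for part~(a): the secular identity obtained from the Schur complement, the one--step increment $\gamma_n c_n = \xi_1^{(n)}$, rigidity and the local law to evaluate $c_n$, propagation of the level-repulsion event down through all $k$ minors via Doob's maximal inequality for the martingale part of the gap increments, and finally a telescopic sum and a martingale CLT. One minor arithmetic slip: the paper's computation (cf.~\eqref{19071004}--\eqref{19071005}) gives $c_n = 2\sqrt{n/N} - 1 + \mathcal O_\prec(N^{-1/3+C\delta})$, not $\sqrt{(n-1)/N}$; these agree only to the order you actually use ($c_n = 1 + \mathcal O(k/N) + \mathcal O_\prec(N^{-1/3+\epsilon})$), so this does not affect the argument, but the constant in your deterministic drift term would be $k^2/(2N)$ rather than $k^2/(4N)$ (both are $o(\sqrt k)$ on the subcritical range). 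Your handling of the self-referential nature of the level-repulsion propagation (``the identity and the repulsion have to be carried jointly'') is exactly what the paper's modified events $\widehat{\mathcal E}_1^{(N-\ell)}(\delta)$ and $\widetilde{\mathcal E}_1(\delta)$ implement.

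For part~(b) you take a genuinely different, and in fact sharper, route for the chaining. You correctly observe that since $1 - \mathcal O_i^{(n,n-1)}\prec N^{-2/3+C\delta}$, the orthogonal component $\bm r_n$ of $\bm w_i^{(n)}$ relative to $\widetilde{\bm w}_i^{(n-1)}$ satisfies $\|\bm r_n\|^2 = 1 - |\mathcal O_i^{(n,n-1)}|^2 \prec N^{-2/3+C\delta}$, hence $\|\bm r_n\|\prec N^{-1/3+C\delta/2}$; a naive accumulation of $k$ such norms would give $kN^{-1/3}$, which is useless on the full subcritical range. Your fix --- that the $\bm r_n$'s across distinct steps are nearly mutually orthogonal because each is built from a fresh independent column $\bm b^{(n)}$, so their squared norms add --- recovers $1 - |\mathcal O_i^{(N,N-k)}|^2 \prec kN^{-2/3+C\delta}$ and hence the claimed bound. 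The paper instead telescopes via triangle inequality with a claimed per-step error-vector norm of $N^{-2/3+C\delta}$, which is not consistent with the unit normalization of $\bm w_i^{(n)}$ (it would force $\|\bm w_i^{(n)}\|^2 = |\mathcal O_i^{(n,n-1)}|^2 + \|\bm r_n\|^2 \neq 1$); your near-orthogonality argument is what actually closes this gap. To make it rigorous one should quantify the approximate orthogonality, e.g., by showing $|\langle\bm r_n,\bm r_m\rangle|\prec N^{-2/3+C\delta}$ for $n\neq m$ using the conditional independence of $\bm b^{(n)}$ from $\mathcal F_{n-1}$ and the decomposition of $\bm r_n$ in the eigenbasis of $H^{(n-1)}$; this gives $\|\sum_n\bm r_n\|^2\prec kN^{-2/3+C\delta}+k^2N^{-2/3+C\delta}\cdot N^{-?}$ and one must check the cross-term contribution is indeed subleading. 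This is doable but is the one place where your sketch leaves a nontrivial step implicit.
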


We formulated Theorem \ref{theo:small_k} only for the top eigenvalue and top eigenvector, but our proof in Section~\ref{sec:subcrit} (see Propositions \ref{EigenvalueSub}--\ref{EigenvectorSub}) gives analogous results when replacing the index $i=1$ of the $\lambda$'s and $\bm w$'s by some $i \le C$ for an $N$-independent constant $C > 0$. 
 We can even show that the  $(\lambda_i^{(N)} - \lambda_i^{(N-k)})$'s are asymptotically independent.
 
 \begin{remark}

 In our statement of the normal fluctuation \eqref{eq:normfluc} we subtracted $k$ from 
 $N(\lambda_1^{(N)} - \lambda_1^{(N-k)})$ and not exactly its  expectation. The discrepancy is minor, in fact we expect that
 \begin{equation}
 \label{eq:k+lower}
 N\E(\lambda_1^{(N)} - \lambda_1^{(N-k)})=k+o(\sqrt{k})
 \end{equation}
 holds for any $1\ll k \le N^{2/3-\varepsilon_0}$, but strictly speaking this does not follow from existing results in
 the literature. 

In fact, in the regime  $k\ge N^{\epsilon_0}$,  \cite[Theorem~1.3]{schnelli2022convergence} 
proves, for any $\omega>0$,  an optimal $N^{-1/3+\omega}$ speed of convergence of the distribution  function  $\mathbf{P}( N^{2/3}(\lambda^{(N)}_1-2)\ge r)$ 
to the Tracy-Widom distribution.
This result is formulated for  any fixed $r$, which does not directly imply the same 
speed of convergence for the expectation. However, we believe that this theorem can easily  be extended to any
$r\ge -C(\log N)^{1/3}$, for some large $C>0$. This extension,  together with standard moderate deviation bounds on the tail
would imply that 
\[
 \E N^{2/3}(\lambda_1^{(N)} - 2 ) = \kappa + O(N^{-1/3 + \omega}),
\]
for some $\kappa \in \R$ (in fact $\kappa$ is the first moment of the $TW_\beta$ distribution). By simple
scaling, a similar asymptotic holds for the largest eigenvalue of the minor
\[
 \E (N-k)^{2/3}(\lambda_1^{(N-k)} - 2 \sqrt{1-k/N} ) = \kappa + O(N^{-1/3 + \omega}).
 \]
From these relations, simple algebra shows \eqref{eq:k+lower}.
 \end{remark}

\subsection{Extension towards the bulk}  
In Section~\ref{sec:FNphaset} we considered the decorrelation transition 
strictly in the edge regime. More  precisely, we studied the few top eigenvalues $\lambda_i$, with $i\sim 1$,
  of a Wigner matrix $H^{(N)}$ and its minor $H^{(N-k)}$, obtained from $H^{(N)}$ by removing the first $k$ rows and columns. We proved that their correlation
exhibits a phase transition when $k\sim N^{2/3}$; for $k\ll N^{2/3}$ the largest eigenvalues of $H^{(N)}$ and $H^{(N-k)}$ are strongly correlated, while in the complementary regime $k\gg N^{2/3}$ they are asymptotically independent.

While in Section~\ref{sec:FNphaset} we studied only the regime $i\sim 1$, we expect that a similar phase transition 
occurs  in the entire complement of the bulk of the spectrum. 
We claim that for any exponent $a\in [0,1)$ and eigenvalue index $i\sim N^a$ we have a transition (from strong dependence to independence) on a scale $k\sim N^{2(1-a)/3}$. The situation in the bulk, $a=1$,  is very different and will be explained
separately at the end (see also Figure~\ref{fig:general_a}).

\begin{figure}[h]
\begin{center}
\begin{subfigure}{0.4\textwidth}
\includegraphics[height=4.5cm]{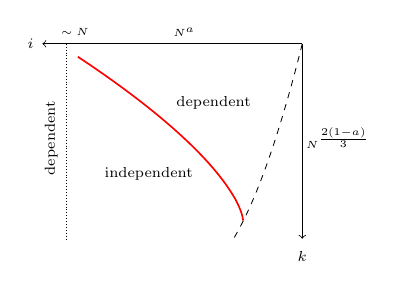}
\caption{}
\end{subfigure}%
~
\begin{subfigure}{0.6\textwidth}
\includegraphics[height=2.5cm]{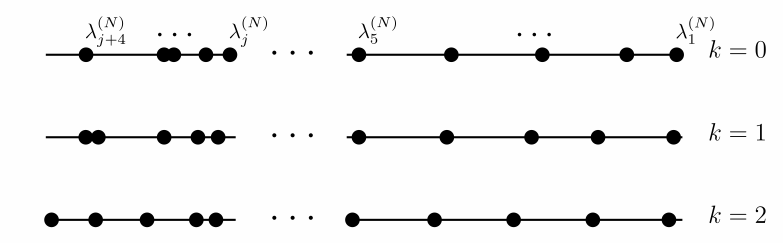}

\bigskip\bigskip\bigskip
\caption{}
\end{subfigure}
\end{center}
\caption{(A): Decorrelation transition in the intermediate regime $a\in [0,1)$. The figure depicts the shrinking spectrum of the minors $H^{(N-k)}$; the dashed line their largest eigenvalue. We consider $a\in[0,1)$, fix an index $i\sim N^a$ and track the trajectory of $\lambda^{(N-k)}_i$ as $k$ decreases. This trajectory crosses the solid red line at the level $k(a)\sim N^{2(1-a)/3}$, marking the phase transition: for $k\ll k(a)$, $\lambda^{(N-k)}_i$ and $\lambda^{(N)}_i$ remain highly correlated, while for $k\gg k(a)$, $\lambda^{(N-k)}_i$ becomes essentially independent of $\lambda^{(N)}_i$. For $a=1$, i.e. 
to the left of the dotted line (bulk regime), the eigenvalues remain correlated for any $k$.  \\[1mm]
	(B): Individual  eigenvalues of $H^{(N)}$ and its first two minors in the extreme edge regime and in the bulk regime.
	In the edge regime, the $i$-th eigenvalues $\lambda_i^{(N)},\lambda_i^{(N-1)}, \lambda_i^{(N-2)}\ldots $
	stick to each other indicating very strong correlation. In the bulk regime the only correlation present is
	the one enforced by interlacing.	 }
\label{fig:general_a}
\end{figure}

We now explain how to obtain the threshold  $k\sim N^{2(1-a)/3}$  for $a\in [0,1)$.
 We do not present the mathematical details for brevity, but just explain the general strategy. We separate the 
argument into two parts: we first show that the eigenvalues corresponding to indices $i\sim N^a$
are (asymptotically) independent for $k\gg N^{2(1-a)/3}$ and then that they are strongly correlated for
$k\ll N^{2(1-a)/3}$.

For $k\gg N^{2(1-a)/3}$, we follow a strategy similar to Section~\ref{sec:supercrit}, i.e. we show the independence of the eigenvalues using a  Dyson Brownian motion (DBM) argument. More precisely, we couple the 
time evolution of $\lambda_i^{(N)}(t)$ and $\lambda_i^{(N-k)}(t)$ under a Brownian matrix flow
with two fully independent DBM flows $\mu_i^{(1)}(t), \mu_i^{(2)}(t)$ showing that for larger times $t$
they are close to each other. To control the coupling, technically this requires 
 following analogue of the eigenvector overlap bound (cf. \eqref{eq:overlap_bound}):
\begin{equation}
\label{eq:newovb}
\big|\langle {\bm w}_i^{(N)},{\bm w}_j^{(N-k)}\rangle\big|^2\prec \frac{N^{2(1-a)/3}}{k} \wedge 1, \qquad\quad i,j\sim N^a.
\end{equation}
The proof of \eqref{eq:newovb} is very similar  to the one in Section~\ref{sec:ovb} below, once it is taken into consideration that the local eigenvalue density is of order $\sim N^{(1-a)/3}$, when $i\sim N^a$.
The bound \eqref{eq:newovb} shows that the eigenvectors of $H^{(N)}$ and $H^{(N-k)}$ are asymptotically orthogonal  $|\langle {\bm w}_i^{(N)},{\bm w}_j^{(N-k)}\rangle|\ll 1$ for $k\gg N^{2(1-a)/3}$. Armed with this bound as an input, we can then proceed with a DBM argument similar to the one in Section~\ref{sec:wcDBM}. The only difference is that in the current case we cannot directly rely on the results from \cite{Bou_extreme, landon2017edge}, which are formulated only in the regime $i\sim 1$. However, inspecting their proof, it is clear that similar arguments can be extended to the regime $i\sim N^a$, for $a\in (0,1)$. This gives the desired independence when $k\gg N^{2(1-a)/3}$.

For $k\ll N^{2(1-a)/3}$, we can follow a strategy similar to Section \ref{sec:subcrit}. As the level repulsion scale for $\lambda_i^{(n)}-\lambda_{i+1}^{(n)}$, for $n\in [N-k, N]$,  should now be changed to $n^{-(2+a)/3-\delta}$, the error term in each step of the recursion analogous to \eqref{19071007} will be $N^{(a-1)/3+C\delta}$ instead of $N^{-1/3+C\delta}$. Hence, after $k$ steps of the iteration, the error term will be of order $k N^{(a-1)/3+C\delta}$, as an extension of the error in \eqref{100730}. Since we will further renormalize $N(\lambda_i^{(N)}-\lambda_i^{(N-k)})$ by $\sqrt{k}$ in order to get the CLT, we require $\sqrt{k}N^{(a-1)/3+C\delta}$ to be negligible. This leads to the threshold $N^{2(1-a)/3}$ from the subcritical side. It is nevertheless unclear at this moment how to extend our argument on level repulsion across the iteration from a fixed
$N$-independent $i$ to $i\sim N^{a}$. 
Together with the explanation in the previous paragraph, this
  shows a transition at a scale $k\sim N^{2(1-a)/3}$, with $a\in [0,1)$.

One might naively think that this phase transition behavior extends also to the purely bulk regime.
However, the situation in the regime $a=1$ is very different. In fact, one can see that when $i\sim N$ 
\emph{single eigenvalues} are non-trivially correlated for any $k\sim N^b$, with $b\in [0,1)$, while \emph{eigenvalue gaps} are asymptotically independent for any  $k\gg 1$. The independence of the gaps follows analogously to the DBM--argument above (see also \cite[Section 4]{cipolloni2023quenched} and \cite[Remark 2.8]{eigenv_decorr}).  
Single eigenvalues are subject to interlacing which, for example,  restricts $\lambda_i^{(N-1)}$ to be in the $1/N$-vicinity
of $\lambda_i^{(N)}$ for $i\sim N$. Since the individual eigenvalues fluctuate on the bigger $\sqrt{\log N}/N$
scale (see \cite[Theorem~1.6]{BMeig}, \cite[Corollary 1.9]{bourgade2022optimal}, \cite[Corollary 1.3]{mody2023log}, and \cite[Theorem 1.3]{landon2020applications}), this restriction is severe, leading to strong correlation. 
Surprisingly, this correlation does not disappear when $\lambda_i^{(N-k)}$ is compared with $\lambda_i^{(N)}$ for
large $k$; interlacing alone cannot explain this. \nc

To see that single eigenvalues are correlated up to $k\ll N$ one can use that by \cite[Theorem 3.1]{Bou_extreme} we have
\begin{equation}
\label{eq:homog}
\lambda_i^{(N)}(t)=\mu_i^{(1)}(t)+F_1(t,N), \qquad\quad \lambda_i^{(N-k)}(t)=\mu_i^{(2)}(t)+F_2(t,N-k), \qquad\quad i\sim N,
\end{equation}
for any $1/N\ll t \ll 1$.
Here  the random variable $F_1$ depends only on the initial conditions $\lambda_i^{(N)}(0), \mu_i^{(1)}(0)$ 
and it is such that $|F_1|\prec 1/N$, and similarly for $F_2$. 
In fact, these quantities are mesoscopic linear statistics of eigenvalues and they
represent  collective fluctuations of many eigenvalues effectively on an energy scale of order $t$. 
Here $\mu_i^{(1)}(t),\mu_i^{(2)}(t)$ are independent of each other for any $t\ge 0$. From \eqref{eq:homog} we thus show that the dependence between $\lambda_i^{(N)}(t), \lambda_i^{(N-k)}(t)$ lies purely in the dependence of the $F_i$ (we choose $t\sim N^{-1+\omega}$, for a small $\omega>0$). However, using resolvent methods,  one can prove that
\begin{equation}
\label{eq:logcorr}
\mathrm{Cov}\big[F_1,F_2]\sim \frac{\log(t+k/N)}{N^2}.
\end{equation}
Recall that the natural fluctuation scale of the bulk eigenvalues is of order $\sqrt{\log N}/N$.
Therefore  $\lambda_i^{(N)}(t)$ and $\lambda_i^{(N-k)}(t)$ would become independent 
only when $\mathrm{Cov}\big[F_1,F_2] \ll (\log N)/N^2$, i.e. when $b=1$  (in the scaling  $k\sim N^b$)
and they are genuinely correlated for any $b\in [0,1)$.
Strictly speaking \eqref{eq:logcorr} is proven only for order one test function (i.e. when $t\sim 1$) in \cite[Proposition 1]{Bor} (see also \cite[Proposition 1]{BorII}); however, we believe that using the method of characteristics for $F_i$ one can obtain \eqref{eq:logcorr} (see e.g. \cite[Section 5]{bourgade2024fluctuations}, \cite[Sections 5-6]{cipolloni2024maximum}, and \cite[Section 7]{landon2022almost}).

We point out that the mesoscopic terms $F_1, F_2$ in \eqref{eq:homog} are present in every regime not only in the bulk and their size is essentially of order $1/N$ everywhere (see, e.g., \cite[Theorem~2.3]{HeKnowles} restricted to the bulk, as well as \cite[Theorem 2.4]{cipolloni2023functional} and \cite[Theorems~2.10-2.11]{LSX21}  valid for the entire spectrum
including the relevant edge regimes but only  for compactly supported mesoscopic functions).  The main difference between bulk indices $i\sim N$ and indices  $i\sim N^a$, $a<1$, closer to the edge is that in the latter case the natural fluctuation scale of $\lambda_i$ is of order $N^{-(2+a)/3}$, i.e. away from the bulk it is
much bigger than the natural size of the order $1/N$ of the mesoscopic terms. Therefore the nontrivial correlation
between $F_1, F_2$ (still present away from the bulk) becomes negligible on the relevant fluctuation scale.

\section{Supercritical and critical regime: Proofs of Theorems \ref{theo:large_k}--\ref{theo:crit_k}}
\label{sec:supercrit}
In this section, we provide the proofs of Theorems \ref{theo:large_k}--\ref{theo:crit_k}. 
\subsection{Proof of Theorem \ref{theo:large_k}}
In this section we focus on the supercritical regime, where $k\gg N^{2/3}$. As the first step, we show that in this regime the eigenvectors of $H^{(N)}$ and $H^{(N-k)}$ become nearly orthogonal to each other, as stated in Proposition \ref{prop:overlap} below. The proof is given in Section \ref{sec:overlap}. 
\begin{proposition}[Step 1: Bound on the eigenvector overlaps]\label{prop:overlap} Fix (small) $\varepsilon_0>0$ and let $H$ be a Wigner matrix satisfying Assumption \ref{ass:momass}. 
Uniformly in $k\le N^{1-\varepsilon_0}$ and in $i,j\le N^{\varepsilon_0}$ we have that
	\begin{equation}
		\big\vert \big\langle {\bm w}_i^{(N)}, {\bm w}_j^{(N-k)}\big\rangle\big\vert^2\prec \frac{N^{2/3}}{k}\wedge 1,
		\label{eq:overlap_bound}
	\end{equation}
	where the normalized eigenvectors of the minor, $\bw_j^{(N-k)}$, are extended to a vector with $N$ coordinates by taking first $k$ coordinates equal to zero.
\end{proposition}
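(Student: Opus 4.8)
The plan is to reduce the eigenvector overlap bound to a two-resolvent local law and then extract the overlap from a suitable spectral average. Recall that, writing $G_1(z_1) := (H^{(N)} - z_1)^{-1}$ and $G_2(z_2) := (H^{(N-k)} - z_2)^{-1}$ (with $H^{(N-k)}$ embedded as an $N \times N$ matrix by padding with zeros), the spectral decomposition gives
\begin{equation*}
\big\langle \Im G_1(z_1) \, \Im G_2(z_2) \big\rangle = \frac{1}{N} \sum_{a,b} \frac{\eta_1}{(\lambda_a^{(N)} - E_1)^2 + \eta_1^2} \cdot \frac{\eta_2}{(\lambda_b^{(N-k)} - E_2)^2 + \eta_2^2} \, \big| \big\langle {\bm w}_a^{(N)}, {\bm w}_b^{(N-k)} \big\rangle \big|^2,
\end{equation*}
where $z_\ell = E_\ell + \mathrm{i}\eta_\ell$. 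Choosing $E_1$ near $\lambda_i^{(N)} \approx 2$ and $E_2$ near $\lambda_j^{(N-k)} \approx 2\sqrt{1-k/N}$, and $\eta_1 \sim N^{-2/3}$, $\eta_2 \sim N^{-2/3}$ (i.e.\ on the scale of the local eigenvalue spacing at the edge), every term in the double sum is nonnegative and the $(i,j)$-term alone is $\gtrsim N^{-1} \eta_1^{-1} \eta_2^{-1} \cdot \eta_1 \eta_2 \cdot |\langle {\bm w}_i^{(N)}, {\bm w}_j^{(N-k)}\rangle|^2 \sim N^{1/3} |\langle {\bm w}_i^{(N)}, {\bm w}_j^{(N-k)}\rangle|^2$ (after correcting the bookkeeping: the point is that a \emph{single} diagonal-type term is bounded below by a constant multiple of the overlap). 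Hence an upper bound of the form $\langle \Im G_1 \, \Im G_2 \rangle \prec N^{2/3}/k$ on this scale translates directly into \eqref{eq:overlap_bound}. One must of course also use edge rigidity (Tracy--Widom scale localization of $\lambda_i^{(N)}$ and $\lambda_j^{(N-k)}$, valid w.v.h.p.) to guarantee that the chosen $E_1, E_2$ are within $O(N^{-2/3})$ of the relevant eigenvalues, so that the $(i,j)$-term is not accidentally small.

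The core analytic input is therefore the two-resolvent local law
\begin{equation*}
\big\langle \Im G_1(z_1) \, \Im G_2(z_2) \big\rangle \prec \frac{1}{N \eta_1 \eta_2} \cdot \frac{1}{\sqrt{|\Im m_1| |\Im m_2|}} + \big(\text{smaller terms}\big) \, ,
\end{equation*}
or more precisely the edge-regime version of \cite[Theorem 3.2]{eigenv_decorr} adapted to the situation where $H^{(N)}$ and $H^{(N-k)}$ are not two \emph{deformations} of the same matrix but rather a matrix and its minor. The natural route, as sketched in the introduction, is the \textbf{Zigzag strategy}: first establish the bound for $H$ with a Gaussian component of size $t \gtrsim N^{-1+\omega}$ added along an Ornstein--Uhlenbeck / Brownian matrix flow, using the method of characteristics to track the evolution of $\langle \Im G_1 \, \Im G_2 \rangle$ (and the relevant isotropic/averaged chains of resolvents) along the flow; then remove the Gaussian component by a Green's function comparison argument (the GFT of Section~\ref{sec:GFT}). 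In the flow step, one writes a self-consistent equation for the two-resolvent chain, identifies the deterministic approximation (a product of semicircle-type quantities corrected by the "overlap" of the two flows), and propagates the a priori bound forward in time; the gain $N^{2/3}/k$ over the trivial bound $1$ comes from the fact that removing $k$ rows and columns effectively introduces a "distance" between the two matrices that suppresses the correlation of their spectral projections, exactly as $k/N^{2/3}$ measures how far $H^{(N-k)}$ has drifted from $H^{(N)}$ on the relevant edge scale.

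The main obstacle I anticipate is precisely adapting the two-resolvent machinery of \cite{eigenv_decorr} — which is phrased for $H_1 = H + A_1$, $H_2 = H + A_2$ with $A_1, A_2$ deterministic — to the \emph{minor} setting, where the relationship between $H^{(N)}$ and $H^{(N-k)}$ is structural (a rank-$2k$-type modification of the relevant resolvent identity via the Schur complement) rather than an additive bounded perturbation. One needs to verify that the Schur complement formula relating $G^{(N)}$ and $G^{(N-k)}$ — namely $(H^{(N-k)} - z)^{-1}$ appears as a block of $(H^{(N)} - z)^{-1}$ up to a correction involving $(H^{[k]} - z - X_k (H^{(N-k)}-z)^{-1} X_k^*)^{-1}$ — is compatible with the stability analysis of the self-consistent equation, and that the extra $k$ degrees of freedom enter the deterministic approximation exactly through the factor governing the $N^{2/3}/k$ decay. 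A secondary technical point is handling the edge: the method of characteristics at the spectral edge is more delicate than in the bulk (the characteristics flow towards the edge and the density vanishes like a square root), so one must choose the spectral parameters and the flow time carefully, and control the $\sqrt{|\Im m_1||\Im m_2|}$ factors which themselves become small near the edge — but this is exactly the kind of analysis carried out in \cite{Cipolloni_meso, cipolloni2023eigenstate} and should go through with the standard edge modifications. I expect the $k \le N^{1-\varepsilon_0}$ upper restriction to be exactly what is needed for the flow-time and Gaussian-component sizes to be chosen consistently, and the $i, j \le N^{\varepsilon_0}$ restriction to come from rigidity being usable uniformly only for the outermost eigenvalues.
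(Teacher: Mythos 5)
Your overall skeleton — spectral decomposition to extract the overlap from $\langle \Im G_1 \Im G_2\rangle$ at edge-scale spectral parameters, rigidity to place the energies, a two-resolvent local law as the analytic engine, and Schur complement to relate $H^{(N)}$ and $H^{(N-k)}$ — matches the paper's Section~\ref{sec:overlap}. But the execution has a genuine gap at the central step, which is exactly the part you flag as ``the main obstacle'' and then leave unresolved.

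First, your Schur complement identity is wrong in two ways. With the block structure \eqref{eq:minordef}, the relevant fact is that the lower-right $(N-k)\times(N-k)$ block of $(H^{(N)}-z_1)^{-1}$ equals $\big(H^{(N-k)} - X_k(H^{[k]}-z_1)^{-1}X_k^* - z_1\big)^{-1}$; you instead wrote down the $k\times k$ upper-left block and moreover placed the adjoint on the wrong factor. This is not cosmetic: the whole point is that the correct block identity turns $\langle \Im G_1 \Im G_2\rangle$ into a normalized trace of two resolvents of the \emph{same} matrix $H^{(N-k)}$, one shifted by $D_k := X_k(H^{[k]}-z_1)^{-1}X_k^*$ and one not (this is \eqref{eq:2G_goal} in the paper).

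Second, and more importantly, you never identify the mechanism that makes the deformed-Wigner two-resolvent local law applicable and that produces the $k/N$ gain. The paper conditions on $X_k$ and $H^{[k]}$: then $D_k$ is a \emph{deterministic} Hermitian-ish deformation of the independent Wigner-type matrix $H^{(N-k)}$, and the minor problem becomes \emph{exactly} the two-deformation setting of \cite{eigenv_decorr} with $D_1 = \Re D_k - \langle\Re D_k\rangle$, $D_2 = 0$, and observables $B_1=B_2=I$. The gain then comes from a quantitative computation: $\langle (\Re D_k - \langle\Re D_k\rangle)^2\rangle \sim k/N$ (Lemma~\ref{lem:D}), and Propositions~\ref{prop:2G}--\ref{prop:M2_bound} bound the two-resolvent average by $\gamma^{-1}$ with $\gamma\gtrsim\langle(D_1-D_2)^2\rangle\sim k/N$. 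Your proposal attributes the $N^{2/3}/k$ gain to a vague ``distance between flows'' rather than to this explicit Hilbert--Schmidt norm of the deformation. You also propose to re-derive a self-consistent equation tailored to the minor structure and check its stability — but the paper avoids that entirely by the conditioning trick, reducing to an already-available (and separately extended to the edge, Appendix~\ref{app:2G}) local law.

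Third, there is a technical issue your plan does not anticipate: $D_k$ is \emph{not} Hermitian since $\Im D_k\neq 0$, so $(H^{(N-k)}-D_k-z_1)^{-1}$ is not the resolvent of a Hermitian deformed Wigner matrix. The paper handles this via Lemma~\ref{lem:Im}, which uses $0\le\Im D_k\lesssim\eta_1 I$ (Lemma~\ref{lem:D}) to replace $\Im(H^{(N-k)}-D_k-z_1)^{-1}$ by $\Im(H^{(N-k)}-\Re D_k-z_1)^{-1}$ up to constants. Without this step, the deformed two-resolvent local law cannot be applied. Finally, the two-resolvent local law you display has the wrong structure (it looks like a fluctuation bound with a $1/(N\eta_1\eta_2)$ prefactor); the operative bound is a \emph{size} bound $\prec\gamma^{-1}$ on the deterministic approximation (Proposition~\ref{prop:M2_bound}) plus a fluctuation bound $\prec(\sqrt{N\ell}\gamma)^{-1}\wedge(N\eta^2)^{-1}$ (Proposition~\ref{prop:2G}), and it is the former that delivers the $N/k$.
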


This immediately proves Theorem \ref{theo:large_k}~(b) and, at the same time, serves as the basis for the following \emph{Dyson Brownian Motion (DBM) analysis} for the eigenvalues $\lambda_1^{(N)}$ and $\lambda_1^{(N-k)}$ in the second step. 

In order to formulate this second step precisely, consider the matrix flow   
\begin{equation}
	\dif H_t^{(N)} = \frac{\dif B_t^{(N)}}{\sqrt{N}},\quad H_0^{(N)}=H^{(N)},
	\label{eq:H_flow}
\end{equation}
where $B_t^{(N)}$ is either real symmetric or a complex Hermitian standard matrix-valued Brownian motion with variances equal to $Nt$ times the second moments of GOE/GUE. The flow \eqref{eq:H_flow} induces the following flow of $H^{(N-k)}$ from \eqref{eq:minordef}: 
\begin{equation}
	\dif H_t^{(N-k)} = \frac{\dif B_t^{(N-k)}}{\sqrt{N}},\quad H^{(N-k)}_0=H^{(N-k)},
	\label{eq:H_min_flow}
\end{equation}
where $B_t^{(N-k)}$ is an $(N-k)\times (N-k)$ matrix-valued Brownian motion obtained from $B_t^{(N)}$ by removing the first $k$ rows and $k$ columns.\footnote{Note that the normalization factor $N^{-1/2}$ in \eqref{eq:H_min_flow} slightly differs from the normalization factor $(N-k)^{-1/2}$ in a standard Brownian flow for $H^{(N-k)}$, however this does not cause any difficulties.} In order to unify notations we set $N_1:=N$ and $N_2:=N-k$ and, similarly to the definition below \eqref{eq:minordef}, denote the eigenvalues of $H^{(N_l)}_t$ in decreasing order by $\lbrace \lambda_i^{(l)}(t)\rbrace_{i=1}^{N_l}$ for $l=1,2$, and let $\lbrace{\bm w}^{(l)}_i(t)\rbrace_{i=1}^{N_l}$ be the corresponding $\ell^2$-normalized eigenvectors. It is well-known (see, e.g., \cite{DBM_init} or \cite[Chapter 4.3.1]{AGZ}) that the eigenvalues of $H^{(N_l)}_t$ follow the \emph{Dyson Brownian motion}
\begin{equation}
	\dif \lambda^{(l)}_i(t) = \frac{\dif b^{(l)}_i(t)}{\sqrt{\beta N}} + \frac{1}{N}\sum_{j\neq i} \frac{1}{\lambda^{(l)}_j(t) -\lambda^{(l)}_i(t)}\dif t,\quad  i\in [N_l]
	\label{eq:init_DBM}
\end{equation}
with $\beta=1$ in the real symmetric case and $\beta=2$ in the complex Hermitian. Here $\lbrace b^{(1)}_i(t)\rbrace_{i=1}^{N_1}$ and $\lbrace b^{(2)}_j(t)\rbrace_{j=1}^{N_2}$ are collections of independent standard Brownian motions with the joint correlation structure
\begin{equation}
	\dif \big[ b_i^{(l_1)}(t), b_j^{(l_2)}(t)\big] = \big\vert \big\langle {\bm w}^{(l_1)}_i(t), {\bm w}^{(l_2)}_j(t)\big\rangle\big\vert^2\dif t =:\Theta_{ij}^{12}(t)\dif t, \quad l_1, l_2\in [2],
	\label{eq:BM_corr}
\end{equation}
for all $i\in[N_{l_1}]$ and $j\in[N_{l_2}]$. The relation \eqref{eq:BM_corr} follows from
\[
\dif b_i^{(l)}(t)=\frac{[(U^{(l)})^*(\dif B_t)U^{(l)}]_{ii}+\overline{[(U^{(l)})^*(\dif B_t)U^{(l)}]_{ii}}}{\sqrt{2}},
\]
as can it be seen from the derivation of the Dyson Brownian motion in \cite[Section 12.2]{erdHos2017dynamical} (see also \cite[Appendix B]{macroCLT_complex}). Here $U^{(l)}=U^{(l)}(t)$ denotes the matrix of the eigenvectors of $H_t^{(N_l)}$, i.e. ${\bm w}_1^{(l)}$ is the first column of $U^{(l)}$, ${\bm w}_2^{(l)}$ is the second column, and so on. In particular, $\Theta^{11}_{ij}=\Theta^{22}_{ij}=\delta_{ij}$, but notice that off-diagonal correlations, $\Theta^{12}, \Theta^{21}$ are typically non-zero, but small in the supercritical regime. Indeed, the overlap bound from Proposition \ref{prop:overlap} shows that the driving Brownian motions in \eqref{eq:init_DBM} for $i \le N^{\varepsilon_1}$ are almost independent. This allows to infer that after sufficiently long time $t$, the top eigenvalues of $H_t^{(N_1)}$ and $H_t^{(N_2)}$ are essentially independent. 
\begin{proposition}[Step 2: Approximate independence for Gaussian divisible ensemble] \label{prop:DBMindep}
Let $H$ be a Wigner matrix satisfying Assumption \ref{ass:momass}. Fix a (small) $\varepsilon_0 > 0$. Then there exist constants $\varepsilon_1 > 0$ and $c > 0$ such that for $k \ge N^{2/3 + \varepsilon_0}$, $i,j \le N^{\varepsilon_1}$, $t \ge N^{-1/3 + 100c}$ and any smooth compactly supported test-functions $F,G:\R\to\R$ it holds that
\begin{equation}
	\begin{split}
		&\E \left[F\big((\lambda_i^{(1)}(t)-2)N_1^{2/3}\big) G\big((\lambda_j^{(2)}(t)-2(N_2/N_1)^{1/2})N_2^{2/3}\big)\right]\\
		&\quad = \E \left[F\big((\lambda_i^{(1)}(t)-2)N_1^{2/3}\big)\right] \E \left[G\big((\lambda_j^{(2)}(t)-2(N_2/N_1)^{1/2})N_2^{2/3}\big)\right] + \mathcal{O}\left(N^{-c}\right) \,, 
	\end{split}
	\label{eq:step2}
\end{equation}
where $\lambda_i^{(1)}(t)$ and $\lambda_j^{(2)}(t)$ are the solutions to \eqref{eq:init_DBM}. 

The same approximate factorization property as in \eqref{eq:step2} holds also for the joint distribution of any $p$-tuple of top-eigenvalues  i.e.~the collection of  appropriately
rescaled $p$-tuples $( \lambda_{i_1}^{(1)}(t),   \ldots \lambda_{i_p}^{(1)}(t))$
and $( \lambda_{j_1}^{(2)}(t),   \ldots \lambda_{j_p}^{(2)}(t))$
with $i_1,  \ldots i_p, j_1,  \ldots, j_p\le N^\varepsilon$, and test functions $F, G : \R^p \to \R$. 
\end{proposition}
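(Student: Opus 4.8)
\emph{Proof proposal.} The plan is to construct, on one probability space, a ``decoupled companion'' of the pair $(\lambda^{(1)}(t),\lambda^{(2)}(t))$: eigenvalue configurations $\mu^{(1)}(t)$ and $\widetilde\lambda^{(2)}(t)$ that are \emph{independent} of each other and that are $N^{-2/3-c}$-close, for $i\le N^{\varepsilon_1}$ and with very high probability, to $\lambda^{(1)}(t)$ and $\lambda^{(2)}(t)$ respectively. Granting this, \eqref{eq:step2} follows by telescoping: writing $\Phi_1(\lambda):=F\big((\lambda_i^{(1)}-2)N_1^{2/3}\big)$ and $\Phi_2(\lambda):=G\big((\lambda_j^{(2)}-2(N_2/N_1)^{1/2})N_2^{2/3}\big)$ and using that $F,G$ are bounded and globally Lipschitz (smooth, compactly supported), one gets $\E[\Phi_1(\lambda^{(1)}(t))\Phi_2(\lambda^{(2)}(t))]=\E[\Phi_1(\mu^{(1)}(t))\Phi_2(\widetilde\lambda^{(2)}(t))]+\mathcal O(N^{-c})$, the middle expectation factorizes exactly by independence, and reversing the approximation on the two marginals yields \eqref{eq:step2} with $\mathcal O(N^{-c})$ error. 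The $p$-tuple version is identical, since $F,G:\R^p\to\R$ are still Lipschitz and depend only on finitely many edge indices.

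For the companion, take $\mu^{(1)}(t):=\lambda^{(1)}(t)$, driven by $b^{(1)}$ and started from $\lambda^{(1)}(0)$, so the first marginal is preserved \emph{exactly}. The configuration $\widetilde\lambda^{(2)}(t)$ is produced from $\lambda^{(2)}(t)$ by removing, in two steps, the two sources of dependence on system $1$. \emph{Step 1 (noise decoupling).} Couple the driving family $b^{(2)}$ with $\widehat b^{(2)}\stackrel{d}{=}b^{(2)}$ that is independent of $\big(\lambda^{(1)}(0),b^{(1)}\big)$, and let $\mu^{(2)}(t)$ be the DBM with initial datum $\lambda^{(2)}(0)$ driven by $\widehat b^{(2)}$. \emph{Step 2 (initial-condition decoupling).} Replace $\lambda^{(2)}(0)$ by an independent copy $\nu^{(2)}(0)$ of its law (e.g.\ a GOE/GUE configuration of size $N_2$), independent of everything, and let $\widetilde\lambda^{(2)}(t)$ be the DBM with initial datum $\nu^{(2)}(0)$ driven by $\widehat b^{(2)}$. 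Since both $\nu^{(2)}(0)$ and $\widehat b^{(2)}$ are independent of $\big(\lambda^{(1)}(0),b^{(1)}\big)$, we have $\widetilde\lambda^{(2)}(t)\perp\lambda^{(1)}(t)=\mu^{(1)}(t)$ exactly, so the required independence holds by construction; it remains to bound the displacements $|\lambda_j^{(2)}(t)-\mu_j^{(2)}(t)|$ and $|\mu_j^{(2)}(t)-\widetilde\lambda_j^{(2)}(t)|$ for $j\le N^{\varepsilon_1}$.

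The second displacement is handled by \emph{edge universality via DBM}: after time $t\ge N^{-1/3+100c}$ the edge statistics of a DBM depend on an edge-rigid initial configuration only through deterministic, rigidity-scale data, and here $\lambda^{(2)}(0)$ and $\nu^{(2)}(0)$ share the same square-root edge profile and the same edge location $2(N_2/N_1)^{1/2}$; the coupling/homogenization estimates of \cite{Bou_extreme, landon2017edge} then give $|\mu_j^{(2)}(t)-\widetilde\lambda_j^{(2)}(t)|\prec N^{-2/3-c}$ for $j\le N^{\varepsilon_1}$. The first displacement is where Proposition~\ref{prop:overlap} enters, via the \emph{weakly correlated DBM} analysis of \cite{macroCLT_complex}: by finite speed of propagation at the edge, $\lambda_j^{(2)}(t)$ and $\lambda^{(1)}(t)$ (for the finitely many indices relevant to $F,G$) feel their driving noises $b^{(2)},b^{(1)}$ only through a window of indices $i\lesssim N^{1-\delta}$ for some fixed $\delta>0$, and on that window the cross-correlations $\Theta^{12}_{ii'}$ are $\prec N^{-c'}$ for some $c'>0$ by Proposition~\ref{prop:overlap} (resp.\ its extension to indices up to $N^{1-\delta}$ along the lines of \eqref{eq:newovb}, which stays sub-polynomially small since $k\gg N^{2/3}$). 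Feeding this smallness into the weakly correlated DBM machinery — equivalently, interpolating between cross-correlation $\rho\,\Theta^{12}$, $\rho\in[0,1]$, and estimating the $\rho$-derivative of the left-hand side of \eqref{eq:step2} — produces $|\lambda_j^{(2)}(t)-\mu_j^{(2)}(t)|\prec N^{-2/3-c}$ for $j\le N^{\varepsilon_1}$ and $t\ge N^{-1/3+100c}$.

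The main obstacle is Step 1. Although the pointwise overlaps are small, each $b^{(2)}_i$ is correlated with the \emph{full} family $b^{(1)}$ with total weight of order one, so one cannot naively couple $\widehat b^{(2)}$ to $b^{(2)}$ globally; the point is that the edge eigenvalues respond only to a controlled window of the noise, that on that window the accumulated correlation — measured in the averaged/operator sense that governs DBM — is still small, and that this smallness can be propagated through the non-local DBM dynamics over the microscopically long but macroscopically short time $t\ge N^{-1/3+100c}$ without destroying the $N^{-c}$ gain. Making this quantitative is precisely the content of the weakly correlated DBM technology, here applied near the spectral edge rather than in the bulk.
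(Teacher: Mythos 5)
Your decomposition differs structurally from the paper's, and the asymmetry is precisely where the argument gets stuck. The paper \emph{modifies both processes symmetrically}: via Doob's martingale representation theorem it diagonalizes the joint covariance $C(t)$ of the truncated noise vector $\underline{\bm b}(t)$, producing a genuinely standard Brownian vector $\underline{\bm\beta}(t)$ whose first and second halves are mutually independent; for indices $i>N^{\omega_A}$ the paper simply injects fresh independent noise, and the initial data for \emph{both} $\mu^{(1)}$ and $\mu^{(2)}$ are eigenvalues of independent GOE/GUE matrices. With this construction, the only place where the overlap bound of Proposition~\ref{prop:overlap} enters is the $N^{\omega_A}\times N^{\omega_A}$ block of $C(t)$, i.e.~indices $i,j\le N^{\omega_A}\le N^{\varepsilon_0}$, which is exactly the range Proposition~\ref{prop:overlap} covers. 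Each comparison $\lambda^{(l)}\approx\mu^{(l)}$ is then a single interpolation $z(t,\alpha)$ changing initial data and driving noise simultaneously, which is precisely what the Bourgade--Landon--Yau short-time universality machinery is built to handle (modulo the extra term from the noise mismatch, treated in \cite[Eqs.~(7.92)--(7.95)]{macroCLT_complex} and \cite[Lemma~4.20]{bourgade2024fluctuations}).

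Your approach keeps $\lambda^{(1)}$ untouched and tries to ``purify'' $\lambda^{(2)}$ alone, splitting the purification into a pure noise change (Step~1) and a pure initial-condition change (Step~2). Step~2 is the standard comparison and is fine. Step~1 is not: since you retain $\mu^{(1)}=\lambda^{(1)}$, which is a functional of the \emph{entire} family $(\lambda^{(1)}(0),b^{(1)})$, independence of $\widetilde\lambda^{(2)}(t)$ from $\lambda^{(1)}(t)$ forces $\widehat b^{(2)}$ to be independent of all of $b^{(1)}$, including all bulk indices. Any coupling of $\widehat b^{(2)}$ to $b^{(2)}$ that is simultaneously (i) independent of the full $b^{(1)}$ and (ii) pathwise close to $b^{(2)}$ on the edge indices therefore needs control of $\Theta^{12}_{ij}$ (or at least $\sum_i(\Theta^{12}_{ij})^2$) over \emph{all} $i$, not just $i\le N^{\varepsilon_0}$; Proposition~\ref{prop:overlap} does not supply this, and the extension you invoke via \eqref{eq:newovb} is stated in the paper only as a heuristic remark, not a theorem. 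You recognize this obstacle, and to bypass it you appeal to a ``finite speed of propagation'' for edge eigenvalues under the DBM, claiming they feel the driving noise only through a window of indices $i\lesssim N^{1-\delta}$; but such a statement is not established in the paper or in the cited references in a form that would close Step~1, and the long-range Coulomb drift makes it genuinely nontrivial. Finally, even granting the coupling, a \emph{pure noise} perturbation with the initial condition held fixed is not the type of stability proved in \cite{Bou_extreme,landon2017edge}: their theorems compare two DBMs with the \emph{same} driving noise and different initial data; the paper's interpolation $z(t,\alpha)$ is designed to reduce the simultaneous change of noise and initial data to exactly that setting, and your two-step split does not reduce to it. So the main step of your proposal is missing a concrete construction, uses an unproved extension of the overlap bound, and leans on an unestablished propagation estimate; it is not a complete alternative to the paper's argument.
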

The proof of Proposition \ref{prop:DBMindep} is given in Section \ref{sec:wcDBM} and it gives Theorem~\ref{theo:large_k}~(a) for ensembles with a Gaussian component of size $N^{-1/3+\varepsilon}$. As the final step for proving Theorem \ref{theo:large_k}~(a), we remove the Gaussian component introduced by \eqref{eq:init_DBM} via a \emph{Green function comparison} (GFT) argument, which is proven in Section \ref{sec:GFT}. 
\begin{proposition}[Step 3: Removing the Gaussian component] \label{prop:GFTeigenval}
Let $H$ be a Wigner matrix satisfying Assumption \ref{ass:momass}, fix a (small) $\varepsilon_0 > 0$ and take $k\ge N^{2/3 + \varepsilon_0}$. Assume that for some small constants $\varepsilon_1, c > 0$ and $i,j \le N^{\varepsilon_1}$, $t \ge N^{-1/3 + 100c}$, and any smooth compactly supported test-functions $F,G:\R\to\R$, we have that \eqref{eq:step2} holds. Then \eqref{eq:step2} holds for time $t=0$ as well. 

Moreover, the same conclusion holds also for the joint distribution of any $p$-tuple of top-eigenvalues  i.e.~the collection of  appropriately
rescaled $p$-tuples $( \lambda_{i_1}^{(1)}(t),   \ldots \lambda_{i_p}^{(1)}(t))$
and $( \lambda_{j_1}^{(2)}(t),   \ldots \lambda_{j_p}^{(2)}(t))$
with $i_1,  \ldots i_p, j_1,  \ldots, j_p\le N^\varepsilon$, and test functions $F, G : \R^p \to \R$. 
\end{proposition}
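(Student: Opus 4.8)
The plan is a long-time Green function comparison (GFT) argument, carried out for the \emph{coupled pair} $(H^{(N)},H^{(N-k)})$ at once. Fix $t:=N^{-1/3+100c}$ (the smallest admissible time; larger $t$ is handled identically) and write $G^{(N)}(z):=(H^{(N)}-z)^{-1}$, $G^{(N-k)}(z):=(H^{(N-k)}-z)^{-1}$; note $k\le N^{1-\varepsilon_0}$ forces $N-k\sim N$, so the minor carries local laws of the same quality as $H$. First I would build a moment-matched Gaussian-divisible comparison matrix: given the target $H$, set $\widehat H:=\sqrt{1-t}\,\widetilde H+\sqrt t\,U$, where $U$ is a GOE/GUE matrix of variance $1/N$ independent of the Wigner matrix $\widetilde H$, and the single-entry distribution of $\widetilde H$ is tuned so that each entry of $\widehat H$ has \emph{exactly} the same moments as the corresponding entry of $H$ up to fourth order. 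This is possible because $(1-t)^{m/2}$ for $m\le 4$ is bounded and bounded away from $0$, so the required moments of $\widetilde H$ stay admissible and $\widetilde H$ still obeys Assumption \ref{ass:momass}. Then $\widehat H$ has entry-variance $1/N$ (hence edge at $2$) and a Gaussian component of size $\sqrt t=N^{-1/6+50c}$; thus \eqref{eq:step2} is available for $\widehat H$ and $\widehat H^{(N-k)}$ — it is precisely the conclusion of Proposition \ref{prop:DBMindep}, up to the deterministic recentering of the two edges caused by the $\sqrt{1-t}$ rescaling. It remains to show that replacing $\widehat H$ by $H$ changes the left side of \eqref{eq:step2}, and each of its two marginals, by only $\mathcal O(N^{-c})$.

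Next I would encode the edge eigenvalue statistics in resolvents: I would write $\E\big[F\big((\lambda_i^{(N)}-2)N^{2/3}\big)G\big((\lambda_j^{(N-k)}-2(1-k/N)^{1/2})N^{2/3}\big)\big]=\E[\Phi]+o(N^{-c})$, where $\Phi$ is a fixed bounded $C^\infty$ function of finitely many entries of $G^{(N)}(z)$ and $G^{(N-k)}(z')$ at spectral parameters with $\mathrm{Re}\,z,\mathrm{Re}\,z'$ within $N^{-2/3+\xi}$ of the respective edge and $\Im z,\Im z'\sim N^{-2/3-\xi}$, for a tiny $\xi>0$. This is the standard reduction (Helffer--Sjöstrand representation of smoothed eigenvalue-counting functions via $\Im$ of resolvent traces at scale $N^{-2/3-\xi}$, together with the fact that, on the rigidity event, $F$ of a rescaled top eigenvalue is a smooth function of such counting functions up to $N^{-D}$), and needs only the optimal edge local law for $H^{(N)}$ and for $H^{(N-k)}$, valid under Assumption \ref{ass:momass}. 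Then I would run a telescopic (Lindeberg) replacement, interpolating $\widehat H=H_0,H_1,\dots,H_M=H$ with $M\sim N^2$, each step changing a single entry $h_{a_\ell b_\ell}$ (and its symmetric partner). Here is the bookkeeping point proper to this problem: if $a_\ell,b_\ell>k$ the swap perturbs \emph{both} $H^{(N)}$ and the minor $H^{(N-k)}$, while if $\min(a_\ell,b_\ell)\le k$ it perturbs $H^{(N)}$ only. In either case, to bound $|\E\Phi(H_\ell)-\E\Phi(H_{\ell-1})|$ I Taylor-expand $\Phi$ about the matrix with that entry set to zero, using $G=\sum_{r\ge 0}(-G_0\Delta)^rG_0$ with the rank-$\le 2$ perturbation $\Delta=\mathcal O_\prec(N^{-1/2})$. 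Since the entries agree to fourth order (Step 1), all Taylor terms of order $\le 4$ cancel in expectation, and the order-$\ge 5$ remainder is $\mathcal O_\prec(N^{-5/2+O(\xi)})$ per entry once the edge local-law bounds are inserted into the resolvent entries — crucially this uses only products of \emph{one}-resolvent bounds for $G^{(N)}$ and $G^{(N-k)}$ separately (no two-resolvent law), because $\Phi$ factors as a function of $G^{(N)}$ times a function of $G^{(N-k)}$. Summing $\lesssim N^2$ steps gives total replacement error $\mathcal O_\prec(N^{-1/2+O(\xi)})=o(N^{-c})$ for $\xi$ small.

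Combining these, $\E[\Phi(H)]=\E[\Phi(\widehat H)]+o(N^{-c})$; the two marginals $\E[F(\cdots)]$ and $\E[G(\cdots)]$ are compared by the same single-matrix argument (dropping the minor, resp.\ dropping $H^{(N)}$); and $\E[\Phi(\widehat H)]$ factorizes by the hypothesis \eqref{eq:step2}. Undoing the encoding of Step 2 then yields \eqref{eq:step2} at $t=0$, which is the claim. The $p$-tuple statement is proved verbatim, just with more spectral parameters entering $\Phi$; and the very same scheme, comparing $H$ with a GUE matrix instead of with $\widehat H$, supplies the GFT needed for the critical regime (Theorem \ref{theo:crit_k}).

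\emph{Main obstacle.} The genuinely new point relative to textbook edge GFT is the \emph{consistent joint replacement} of the coupled pair: the telescoping must be organized so that a swap inside the shared block perturbs $H^{(N)}$ and $H^{(N-k)}$ simultaneously and coherently, and one must verify that the single-step power counting is uniform over all entry positions, in particular for the ``corner'' entries $h_{ab}$ with $a\le k<b$ that touch only $H^{(N)}$. The other, more technical but classical, burden is Step 2: making the resolvent encoding of the \emph{joint} edge eigenvalue statistics precise enough that every error is honestly $o(N^{-c})$, which is what dictates the compatible choices of the regularization scale $N^{-2/3-\xi}$ and of matching four moments.
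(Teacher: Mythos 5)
Your proposal is mathematically sound for Proposition~\ref{prop:GFTeigenval} itself, but it takes a genuinely different route from the paper. The paper does \emph{not} construct a moment-matched comparison ensemble; instead it runs a continuous Green function comparison along the Ornstein--Uhlenbeck flow \eqref{eq:OU}, which preserves the first and second entry moments exactly, and bounds the time-derivative of the joint observable $\langle \Im G_t^{(N)}\rangle \langle \Im G_t^{(N-k)}\rangle$ directly via a cumulant expansion (Proposition~\ref{prop:GFT}), obtaining an $N^{-1/6+C\varepsilon}$ error with no third/fourth-moment matching. The hand-off from the OU flow $\widehat H_t$ to the matrix-DBM flow $H_t$ (the one for which Proposition~\ref{prop:DBMindep} was stated) is accomplished by the exact distributional identity $H_{ct_1}^{(N_l)}\stackrel{\dif}{=}\widehat H_{t_1}^{(N_l)}$ with $c=(1-e^{-t_1})/t_1$, rather than by tuning the single-entry distribution of an auxiliary $\widetilde H$. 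Your discrete Lindeberg/four-moment-matching scheme is a valid classical alternative (Tao--Vu style); the bookkeeping for the coupled pair you flag -- swaps with $\min(a,b)\le k$ touch only $G^{(N)}$, swaps in the shared block touch both, and in either case only single-resolvent local laws enter because the observable factorizes -- is correct, and the $N^{-5/2+O(\xi)}$ per-entry bound summed over $N^2$ entries is the standard edge four-moment estimate. What your route costs you is the extra construction and verification of the moment-matched $\widetilde H$, which the paper sidesteps; what it buys you is an error $N^{-1/2+O(\xi)}$ instead of $N^{-1/6+C\varepsilon}$, which is immaterial here.

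The one genuine flaw is the closing remark that ``the very same scheme, comparing $H$ with a GUE matrix instead of with $\widehat H$, supplies the GFT needed for the critical regime.'' Four-moment matching is not available when the comparison ensemble is fixed to be GOE/GUE: the third moment of $H$ need not vanish, so the Lindeberg steps no longer cancel to fourth order and the per-entry error degrades to the third-order term. To compare with GOE/GUE you need a GFT that requires only second-moment matching -- which is exactly what the paper's continuous Proposition~\ref{prop:GFT} (or an EYY-style discrete version exploiting Ward-identity cancellations in the third- and fourth-order cumulant terms) provides, and what your 4-moment Lindeberg as written does not. So your argument proves Proposition~\ref{prop:GFTeigenval} but does not, as stated, also yield Proposition~\ref{prop:GFTeigenval2}.
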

Combining Proposition \ref{prop:DBMindep} with Proposition \ref{prop:GFTeigenval}, we immediately conclude the proof of Theorem \ref{theo:large_k}~(a) and thus the whole proof of Theorem \ref{theo:large_k}. \qed

\subsection{Proof of Theorem \ref{theo:crit_k}}

The proof of Theorem \ref{theo:crit_k} entirely relies on the following comparison argument, proven in Section \ref{sec:GFT}. 

\begin{proposition}[Comparison with the Gaussian ensemble] \label{prop:GFTeigenval2}
Let $H$ be a real symmetric or complex Hermitian Wigner matrix satisfying Assumption \ref{ass:momass}. Fix a constant $\alpha > 0$ and set $k := \lfloor \alpha N^{2/3} \rfloor$. Moreover, fix a small $\varepsilon > 0$ and consider the joint $r$-point correlation functions of $H^{(N)}$ and $H^{(N-k)}$ in the edge regime and  denote it by $p_{r, \alpha}^{(N)} : \R^{2r} \to \R$. Denote the analog for the GOE/GUE (corresponding to the symmetry class of $H$) ensemble by $\widetilde{p}_{r, \alpha}^{(N)} : \R^{2r} \to \R$. 

Then, in the sense of weak convergence (i.e. tested against smooth compactly supported test functions), it holds that 
\begin{equation}
\lim\limits_{N \to \infty} N^{2r/3} \left(p_{r, \alpha}^{(N)}(x_1, ... , x_r, y_1, ... , y_r) - \widetilde{p}_{r, \alpha}^{(N)}(x_1, ... , x_r, y_1, ... , y_r)\right) = 0 \,. 
\end{equation}
\end{proposition}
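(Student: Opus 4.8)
The plan is to run a long-time Green's function comparison (GFT) argument along the Ornstein--Uhlenbeck flow, simultaneously for the matrix $H^{(N)}$ and its minor $H^{(N-k)}$, exploiting that the minor is a deterministic coordinate restriction of the full matrix. First I would embed $H$ into the flow $\dif H_t = -\tfrac12 H_t \dif t + \dif B_t/\sqrt{N}$ with $H_0 = H$, so that $H_t$ has the same first two moments as $H$ for all $t$ and acquires a Gaussian component of size $\sim (1-e^{-t})^{1/2}$; restricting this flow to the last $N-k$ rows and columns yields exactly the corresponding OU flow for $H^{(N-k)}$. The target is to show that, tested against smooth compactly supported functions in the edge-rescaled variables $x_i = 2 + N^{-2/3}X_i$, $y_j = 2(1-k/N)^{1/2} + N^{-1/2}(N-k)^{-1/6}Y_j$, the joint $r$-point correlation function changes by $o(N^{-2r/3})$ from $t=0$ up to a time $t \sim N^{-1/3+c}$, which is long enough to guarantee a Gaussian component of the size required by the earlier steps (Proposition \ref{prop:DBMindep} for the supercritical regime and the Forrester--Nagao GUE input \cite[Proposition 5]{determinantal_corr} in the critical regime).

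The mechanism is the standard one: express the edge-rescaled correlation functions as expectations of products of traces of regularized resolvents, $\prod_a \langle \Im G^{(N)}(x_a + \ii \eta) \rangle$ and $\prod_b \langle \Im G^{(N-k)}(y_b + \ii \eta') \rangle$, with imaginary parts slightly below the eigenvalue spacing ($\eta \sim N^{-2/3-\epsilon}$), so that these smeared quantities determine the correlation functions up to negligible errors; this requires edge rigidity and local laws for both $H^{(N)}$ and $H^{(N-k)}$, which hold since $k \le N^{1-\varepsilon}$ keeps the minor a Wigner-type matrix with edge near $2(1-k/N)^{1/2}$. Then I would differentiate $\frac{\dif}{\dif t}\,\E[\,\cdot\,]$: the OU generator produces the usual third- and fourth-order terms in the entries of $H_t$, each carrying a derivative $\del_{h_{ab}}$ of a resolvent, equal to $\tfrac12(\E[\chi^3] - 0)$-type and $\tfrac{1}{24}(\E[\chi^4]-3)$-type cumulant mismatches times power-counting factors. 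The key point is that the OU flow preserves the first two moments, so only the third and higher cumulants enter; combined with the local-law bounds on resolvent chains (entrywise and isotropic laws, plus the fluctuation-averaging mechanism giving extra $N^{-1/2}$-type gains on averaged quantities) one gets $|\frac{\dif}{\dif t}\E[\cdot]| \le N^{-1/6-c'}$ or so uniformly on $[0,t]$, hence a total change $o(1)$ after rescaling. Crucially, the same resolvent-expansion estimates apply verbatim to $G^{(N-k)}$, and the \emph{mixed} terms (a derivative hitting a factor of $G^{(N)}$ and another hitting a factor of $G^{(N-k)}$) only arise for entries $h_{ab}$ with both indices in $\{k+1,\dots,N\}$, where $G^{(N)}$ and $G^{(N-k)}$ are both differentiable; these mixed terms are handled by the same power counting and are in fact smaller, so no new phenomenon appears.

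The main obstacle is the bookkeeping for the \emph{joint} $r$-point function with resolvents of \emph{two different matrices} of \emph{two different sizes} living on nested index sets: one must check that the cumulant expansion of $\del_{h_{ab}}\langle \Im G^{(N)}\rangle$ and $\del_{h_{ab}}\langle \Im G^{(N-k)}\rangle$ can be controlled simultaneously, that the off-diagonal block $X_k$ in \eqref{eq:minordef} (whose entries appear in $G^{(N)}$ but not in $G^{(N-k)}$) does not spoil the power counting, and that the edge local law for $H^{(N-k)}$ is available at the required scale uniformly in $k \le N^{1-\varepsilon}$ with the shifted edge $2(1-k/N)^{1/2}$ — all of which are by now routine but technically heavy. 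A secondary point is to confirm that the a priori estimates (rigidity, eigenvalue/eigenvector delocalization, level repulsion at the edge) hold along the entire flow $H_t$, which follows from the fact that $H_t$ is itself a Wigner matrix satisfying Assumption \ref{ass:momass} with comparable constants. Once the $t$-derivative bound is in place, integrating from $0$ to $t \sim N^{-1/3+c}$ and invoking the already-established statements at positive time (Proposition \ref{prop:DBMindep} in the supercritical case; the GUE/GOE version of the correlation function from \cite{determinantal_corr} in the critical case) closes both Theorem \ref{theo:large_k}~(a) and Theorem \ref{theo:crit_k}. \qed
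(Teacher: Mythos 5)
Your mechanism — OU flow preserving the first two moments, It\^o + cumulant expansion of the time derivative, single-resolvent edge local laws and fluctuation averaging to obtain a uniform derivative bound of size $N^{-1/6-c'}$, and the observation that mixed terms (derivatives hitting both $\langle \Im G^{(N)}\rangle$ and $\langle \Im G^{(N-k)}\rangle$ factors through entries in the common index set) obey the same power counting — is precisely what the paper's Proposition \ref{prop:GFT} establishes. But there is a genuine gap in how you close the argument for Proposition \ref{prop:GFTeigenval2}: you explicitly integrate the GFT only from $t=0$ up to $t\sim N^{-1/3+c}$, and then claim you can ``invoke the Forrester--Nagao GUE input.'' A short OU time $t\sim N^{-1/3+c}$ produces a Wigner matrix with a \emph{small} Gaussian component, not GOE/GUE; the explicit formula \cite[Proposition 5]{determinantal_corr} is a determinantal identity for GUE itself and does not apply to a Gaussian-divisible ensemble. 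Nor is there an available DBM-relaxation statement that the joint edge statistics of the coupled pair $(H_t^{(N)},H_t^{(N-k)})$ at $k\sim N^{2/3}$ converge to the Forrester--Nagao limit after $t\sim N^{-1/3+c}$: Proposition \ref{prop:DBMindep} is a \emph{supercritical} statement (asymptotic independence for $k\gg N^{2/3}$) and says nothing about the nontrivially correlated critical regime.

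The fix is easy and uses exactly the estimate you already have. Since your claimed derivative bound is $|\tfrac{\dif}{\dif t}\E[\cdot]|\lesssim N^{-1/6-c'}$ \emph{uniformly} in $t$, you should integrate all the way to $T=100\log N$ (accumulating only an extra $\log N$ factor, still $o(1)$), and then pass from $T$ to $t=\infty$ (exact GOE/GUE) by trivial perturbation theory, since $\|H_T-H_\infty\|$ is exponentially small; that is precisely the time-interval split $[0,T]\cup[T,\infty]$ in the paper's proof of Proposition \ref{prop:GFT}. Stopping at $t\sim N^{-1/3+c}$ is appropriate for Proposition \ref{prop:GFTeigenval} (removing a Gaussian component inserted by the supercritical DBM step), not for Proposition \ref{prop:GFTeigenval2}, which requires comparison all the way to the Gaussian ensemble. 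Once you integrate to $T=100\log N$ and invoke the standard translation from smoothed Green-function observables to correlation functions (as in \cite[Section 15.2]{erdHos2017dynamical}), your argument matches the paper's.
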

Given Proposition \ref{prop:GFTeigenval2}, Theorem \ref{theo:crit_k} readily follows by invoking the convergence of $N^{2/3}\widetilde{p}_{1, \alpha}^{(N)}(x, y)$ in the complex Hermitian case. \qed

\section{Eigenvector overlaps: Proof of Proposition \ref{prop:overlap}}\label{sec:overlap}
In this section, we prove the overlap bound in Proposition \ref{prop:overlap} via a suitable two-resolvent local for the resolvents of $H^{(N)}$ and $H^{(N-k)}$. The key idea to relate the two resolvents with each other is to view the resolvent of $H^{(N)}$ as the resolvent of a \emph{deformation} of $H^{(N-k)}$ via \emph{Schur complement formula}. 

More precisely, taking $z_1, z_2\in \mathbf{H}$, we will extract the overlap bound \eqref{eq:overlap_bound} from an upper bound on $\left\vert\langle \Im (H^{(N)}-z_1)^{-1}\Im F(z_2)\rangle\right\vert$, where $F(z_2)$ is extended from $(H^{(N-k)}-z_2)^{-1} \in \C^{(N-k) \times (N-k)}$ to an $N\times N$ matrix by writing zeros in the first $k$ rows and columns. Then, using Schur complement formula for the minor of $(H^{(N)}-z_1)^{-1}$ formed by the last $(N-k)$ rows and columns we find that
\begin{equation}
\langle \Im (H-z_1)^{-1}\Im F(z_2)\rangle = \frac{1}{N}{\rm Tr}\left[ \Im \left(H^{(N-k)}-D_k - z_1\right)^{-1} \Im (H^{(N-k)}-z_2)^{-1} \right]
\label{eq:2G_goal}
\end{equation}
where the deformation $D_k$ is given by
\begin{equation}
D_k:=X_k(H^{[k]}-z_1)^{-1}X_k^*.
\label{eq:D_def}
\end{equation}

In order to estimate \eqref{eq:2G_goal}, we will first collect some preliminary information on the deformation $D_k$ in Section \ref{subsec:prelim} and afterwards, in Section \ref{sec:ovb}, show that $|\eqref{eq:2G_goal}| \prec N^{2/3}/k \wedge 1$, yielding the desired bound~\eqref{eq:overlap_bound}. 

\subsection{Preliminaries: Analysis of the deformation} \label{subsec:prelim}
To analyze $D_k$, we first establish the concentration of the sample covariance matrix $X_kX_k^*$ around the identity matrix in Lemma \ref{lem:XX^*}. We then outline several important properties of $D_k$ in Lemma \ref{lem:D}, which are essential for studying \eqref{eq:2G_goal}. The proofs of both lemmas are postponed to Section \ref{subsec:prelimproof}. 
\begin{lemma}[Concentration of $X_kX_k^*$ around the identity matrix]\label{lem:XX^*} Fix (small) $\varepsilon_0, c>0$. Then for any $k\le N^{1-\varepsilon_0}$ with very high probability it holds that
\begin{equation}
\lVert X^*_kX_k - I_k\rVert\le c,
\label{eq:XX_concentr}
\end{equation}
as $N$ goes to infinity. Here $I_k$ is a $k\times k$ identity matrix.
\end{lemma}
\begin{lemma}[Properties of $D_k$]\label{lem:D} Fix (small) $\varepsilon_0, \delta,c>0$. Consider $z_1=E_1+\ii\eta_1\in\mathbf{H}$ with $ \vert E_1\vert \ge \delta$. In the regime $ N^{\epsilon_0} \le k\le N^{1-\varepsilon_0}$ with very high probability we have the following bounds on $\Re D_k$ and $\Im D_k$:
\begin{equation}
\lVert \Re D_k\rVert \le \vert E_1\vert^{-1}(1+c) \quad\text{and}\quad 0\le\Im D_k\le \eta_1\vert E_1\vert^{-2}(1+c) I.
\label{eq:D_bounds}
\end{equation}
We also have that
\begin{equation}
\frac{1}{N-k}{\rm Tr}\left[\left(\Re D_k-\frac{1}{N-k}{\rm Tr}\,\Re D_k\right)^2  \right]\sim \frac{1}{N-k}{\rm Tr}\left[\left(\Re D_k\right)^2  \right]\sim \frac{k}{N}
\label{eq:D_HS}
\end{equation}
with implicit constants depending only on $\delta$.
\end{lemma}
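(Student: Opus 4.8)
<br>

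The plan is to expand the inner resolvent $(H^{[k]}-z_1)^{-1}$ to one order and then argue by deterministic linear algebra, using only two probabilistic inputs. The first is the standard operator-norm bound for Wigner matrices: writing $H^{[k]}=\sqrt{k/N}\,\widehat H$ with $\widehat H$ a $k\times k$ Wigner matrix of variance $1/k$ gives $\lVert H^{[k]}\rVert\prec\sqrt{k/N}$, so that w.v.h.p.\ $\lVert H^{[k]}\rVert\le N^{-\varepsilon_0/3}\ll\delta\le|E_1|$. The second input is Lemma~\ref{lem:XX^*}, applied with a small constant $c'=c'(c)$ in place of $c$, which w.v.h.p.\ gives $\lVert X_kX_k^*\rVert=\lVert X_k^*X_k\rVert\le 1+c'$ and, since then all $k$ eigenvalues of $X_k^*X_k$ lie in $[1-c',1+c']$, also $\mathrm{Tr}\big[(X_kX_k^*)^2\big]=\mathrm{Tr}\big[(X_k^*X_k)^2\big]\sim k$ and $\mathrm{Tr}[X_k^*X_k]\sim k$. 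Working on the w.v.h.p.\ event where both inputs hold, $G^{[k]}:=(H^{[k]}-z_1)^{-1}$ obeys $\lVert G^{[k]}\rVert\le(|E_1|-\lVert H^{[k]}\rVert)^{-1}=|E_1|^{-1}(1+o(1))$, and the resolvent identity $G^{[k]}=-z_1^{-1}I_k+z_1^{-1}H^{[k]}G^{[k]}$ yields
\[
D_k=-z_1^{-1}X_kX_k^*+\mathcal{E},\qquad \mathcal{E}:=z_1^{-1}X_kH^{[k]}G^{[k]}X_k^*,
\]
with $\lVert\mathcal{E}\rVert\le|z_1|^{-1}\lVert X_kX_k^*\rVert\,\lVert H^{[k]}\rVert\,\lVert G^{[k]}\rVert\lesssim\sqrt{k/N}$ and $\mathrm{rank}(\mathcal{E})\le k$.

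Next I would prove the norm bounds \eqref{eq:D_bounds}. For the imaginary part, use the exact identity $\Im D_k=X_k(\Im G^{[k]})X_k^*$: since $\Im G^{[k]}=\eta_1 G^{[k]}(G^{[k]})^*\ge 0$, conjugation by $X_k$ gives $\Im D_k\ge 0$, while $\Im G^{[k]}\le\eta_1\lVert G^{[k]}\rVert^2 I_k\le\eta_1|E_1|^{-2}(1+o(1))I_k$ together with $X_kX_k^*\le(1+c')I$ gives $\Im D_k\le\eta_1|E_1|^{-2}(1+c)I$, provided $c'$ is small enough to absorb the $(1+o(1))$. For the real part, $\Re D_k=\Re(-z_1^{-1})\,X_kX_k^*+\Re\mathcal{E}$, so $\lVert\Re D_k\rVert\le|\Re(z_1^{-1})|\,\lVert X_kX_k^*\rVert+\lVert\mathcal{E}\rVert\le|E_1|^{-1}(1+c')+\mathcal{O}_\prec(\sqrt{k/N})$ by $|\Re(z_1^{-1})|=|E_1|/(E_1^2+\eta_1^2)\le|E_1|^{-1}$; since $|z_1|=\mathcal{O}(1)$ in the regime relevant for Section~\ref{sec:overlap} (which is what makes all constants depend only on $\delta$), the error is $o(|E_1|^{-1})$ and is absorbed, giving $\lVert\Re D_k\rVert\le|E_1|^{-1}(1+c)$.

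Then I would establish the Hilbert--Schmidt estimates \eqref{eq:D_HS}. Put $a:=\Re(-z_1^{-1})$ and $P:=X_kX_k^*$, so $\Re D_k=aP+\Re\mathcal{E}$. From $\mathrm{rank}(\mathcal{E})\le k$ and $\lVert\mathcal{E}\rVert\lesssim\sqrt{k/N}$ one gets $\mathrm{Tr}[(\Re\mathcal{E})^2]\prec k^2/N$, hence by Cauchy--Schwarz $|\mathrm{Tr}[P\,\Re\mathcal{E}]|\le\sqrt{\mathrm{Tr}[P^2]\,\mathrm{Tr}[(\Re\mathcal{E})^2]}\prec k^{3/2}N^{-1/2}$, and therefore
\[
\mathrm{Tr}\big[(\Re D_k)^2\big]=a^2\,\mathrm{Tr}[P^2]+\mathcal{O}_\prec\big(k^{3/2}N^{-1/2}+k^2N^{-1}\big)=a^2\,\mathrm{Tr}[P^2]\,(1+o(1)),
\]
because $k\le N^{1-\varepsilon_0}$ makes the error relatively $o(1)$ and $a^2\sim 1$. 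With $\mathrm{Tr}[P^2]\sim k$ and $N-k\sim N$ this gives $\tfrac{1}{N-k}\mathrm{Tr}[(\Re D_k)^2]\sim k/N$. For the centered trace, with $m:=\tfrac{1}{N-k}\mathrm{Tr}\,\Re D_k$ one has $\tfrac{1}{N-k}\mathrm{Tr}[(\Re D_k-mI)^2]=\tfrac{1}{N-k}\mathrm{Tr}[(\Re D_k)^2]-m^2$, and since $m=\tfrac{1}{N-k}\Re\,\mathrm{Tr}[G^{[k]}X_k^*X_k]$ with $|\mathrm{Tr}[G^{[k]}X_k^*X_k]|\le\lVert G^{[k]}\rVert\,\mathrm{Tr}[X_k^*X_k]\lesssim k$, one has $|m|\lesssim k/N$ and hence $m^2\lesssim(k/N)^2\le N^{-\varepsilon_0}(k/N)\ll k/N$. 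Thus the centered trace is also $\sim k/N$, in particular comparable to $\tfrac{1}{N-k}\mathrm{Tr}[(\Re D_k)^2]$, which is \eqref{eq:D_HS}.

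The bookkeeping above is routine; the only point of real substance is the \emph{lower} bound in \eqref{eq:D_HS}, i.e.\ that $\Re D_k$ has Hilbert--Schmidt norm of the full size $\sqrt{k/N}$ rather than being accidentally smaller. This is exactly where Lemma~\ref{lem:XX^*} enters in an essential (not merely convenient) way: it forces $X_kX_k^*$ to have $k$ eigenvalues all of order one, so that $\mathrm{Tr}[(X_kX_k^*)^2]\sim k$; operator-norm control of $D_k$ and of $\mathcal{E}$ alone would only deliver the upper bound. The remaining care is in choosing $c'$ small enough relative to the target $c$ so that the various $1+o(1)$ factors can be absorbed and one lands exactly on the constants in \eqref{eq:D_bounds}.
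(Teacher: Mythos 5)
Your proof is correct and follows essentially the same route as the paper's: both expand $D_k$ around the leading term $-z_1^{-1}X_kX_k^*$ (the paper writes $D_k+z_1^{-1}X_kX_k^*=X_k\tfrac{H^{[k]}}{z_1(H^{[k]}-z_1)}X_k^*$, which is exactly your $\mathcal{E}$), control $\lVert H^{[k]}\rVert\prec\sqrt{k/N}$ by the Wigner-matrix norm bound, invoke Lemma~\ref{lem:XX^*} for the two-sided control of $X_k^*X_k$, and pass to the Hilbert--Schmidt estimates by cyclicity and the smallness of the correction. Your use of the rank bound $\mathrm{rank}(\mathcal{E})\le k$ to control $\mathrm{Tr}[(\Re\mathcal{E})^2]$ is a minor technical variant of the paper's device of reducing to a $k\times k$ trace via cyclicity, and you supply details (the $\Im D_k$ upper bound, the $m^2\ll k/N$ step for the centered variance) that the paper compresses into ``analogous'' / ``similarly''.
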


Note that $H^{(N-k)}-D_k$ is not Hermitian since $\Im D_k\neq 0$. However, since $0 \le \Im D_k \lesssim \eta_1$, as shown in \eqref{eq:D_bounds}, we can remove $\Im D_k$ from the resolvent of $H^{(N-k)} - D_k$ without changing its size. This is the content of the following general lemma, whose proof is given in Section \ref{subsec:prelimproof}. 

\begin{lemma}[Removing matrix from the imaginary part]\label{lem:Im} Fix a (large) $L>0$. Let $H$ and $S$ be $N\times N$ Hermitian matrices. Assume that $S\ge 0$ and $\lVert S\rVert\le L$. Then for any $\eta>0$ it holds that
\begin{equation}
c\Im (H-\ii\eta)^{-1}\le \Im (H-\ii\eta(I+S))^{-1}\le C\Im (H-\ii\eta)^{-1}
\label{eq:Im}
\end{equation}
for some $c,C>0$ which depend only on $L$.
\end{lemma}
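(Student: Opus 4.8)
The plan is to convert the matrix-valued imaginary shift $\ii\eta(I+S)$ into an ordinary scalar spectral parameter by a similarity transformation, after which the estimate reduces to an elementary comparison of two positive definite matrices. Concretely, set $A:=I+S$. Since $S\ge 0$ and $\lVert S\rVert\le L$, the matrix $A$ is positive definite with spectrum contained in $[1,1+L]$, so $A^{1/2}$ and $A^{-1/2}$ are well defined, and
\[
H-\ii\eta A=A^{1/2}\big(A^{-1/2}HA^{-1/2}-\ii\eta\big)A^{1/2}
\]
is invertible for $\eta>0$ because $\widetilde H:=A^{-1/2}HA^{-1/2}$ is Hermitian. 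First I would take imaginary parts of this identity: using the scalar-type relation $\Im(\widetilde H-\ii\eta)^{-1}=\eta(\widetilde H^{2}+\eta^{2})^{-1}$ together with $A^{1/2}\widetilde H^{2}A^{1/2}=HA^{-1}H$, one obtains the closed formula
\[
\Im\,(H-\ii\eta A)^{-1}=\eta\,\big(HA^{-1}H+\eta^{2}A\big)^{-1},
\]
which for $S=0$ (i.e.\ $A=I$) specializes to $\Im(H-\ii\eta)^{-1}=\eta(H^{2}+\eta^{2})^{-1}$. Checking this identity is routine: after the conjugation all matrices in play are functions of the single Hermitian matrix $\widetilde H$, so the usual scalar resolvent algebra applies verbatim.

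The second step is a crude comparison of the two ``denominators''. From $(1+L)^{-1}I\le A^{-1}\le I$ and conjugation by the Hermitian matrix $H$ we get $(1+L)^{-1}H^{2}\le HA^{-1}H\le H^{2}$, while $\eta^{2}I\le \eta^{2}A\le(1+L)\eta^{2}I$; adding these and absorbing the mismatched constants into the worst one yields
\[
(1+L)^{-1}\big(H^{2}+\eta^{2}\big)\le HA^{-1}H+\eta^{2}A\le(1+L)\big(H^{2}+\eta^{2}\big).
\]
All three operators here are positive definite for $\eta>0$, so inverting reverses the inequalities; multiplying by $\eta$ and substituting the two formulas from the first step gives
\[
(1+L)^{-1}\,\Im(H-\ii\eta)^{-1}\le \Im\big(H-\ii\eta(I+S)\big)^{-1}\le(1+L)\,\Im(H-\ii\eta)^{-1},
\]
i.e.\ the claim with $c=(1+L)^{-1}$ and $C=1+L$, both depending only on $L$.

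I do not expect a genuine obstacle: beyond the algebraic identity the only inputs are the two standard operator facts that conjugation preserves the positive semidefinite order and that $X\mapsto X^{-1}$ reverses it on positive definite matrices. The whole content is the reduction in the first step, which trades the matrix-valued shift for a scalar one at the harmless price of replacing $H^{2}$ by $HA^{-1}H$, with $A^{-1}$ two-sidedly bounded. (As a sanity check, in the commutative/scalar case $H=\lambda$, $A=a$ the formula reads $\Im(\lambda-\ii\eta a)^{-1}=\eta a/(\lambda^{2}+\eta^{2}a^{2})$, whose ratio to $\eta/(\lambda^{2}+\eta^{2})$ indeed lies in $[(1+L)^{-1},1+L]$ for $a\in[1,1+L]$.) An alternative, slightly more hands-on route would avoid the square roots and instead write $\Im(H-\ii\eta A)^{-1}=\eta\,(H-\ii\eta A)^{-1}A(H+\ii\eta A)^{-1}$, bound $A$ between $I$ and $(1+L)I$ inside this sandwich, and then compare $(H-\ii\eta A)^{-1}(H+\ii\eta A)^{-1}$ with $(H-\ii\eta)^{-1}(H+\ii\eta)^{-1}$ by expanding $(H+\ii\eta A)(H-\ii\eta A)=H^{2}+\eta^{2}A^{2}+\ii\eta[A,H]$ and estimating the anti-Hermitian commutator $\ii\eta[A,H]$ against $\theta\eta^{2}S^{2}+\theta^{-1}H^{2}$ via Young's inequality; but the similarity-transform argument above is cleaner and produces the sharp constants, so that is the one I would write down.
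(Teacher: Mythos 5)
Your proof is correct and takes a genuinely different route from the paper's. The paper works at the level of quadratic forms: it writes $\langle\bfv,\Im(H-\ii\eta)^{-1}\bfv\rangle=\eta\|(H-\ii\eta)^{-1}\bfv\|^2$ and $\langle\bfv,\Im(H-\ii\eta(I+S))^{-1}\bfv\rangle=\eta\langle\bfw,(I+S)\bfw\rangle$ with $\bfw:=(H-\ii\eta(I+S))^{-1}\bfv$, then compares the two via the algebraic identity $(H-\ii\eta)^{-1}\bfv=\bfw-\ii\eta(H-\ii\eta)^{-1}S\bfw$ together with the norm bounds $\|(H-\ii\eta)^{-1}\|\le 1/\eta$ and $\|(H-\ii\eta(I+S))^{-1}\|\le 1/\eta$, the latter obtained by the same $A^{-1/2}$-conjugation you use. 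Your argument instead pushes the conjugation all the way to a closed formula $\Im(H-\ii\eta A)^{-1}=\eta\,(HA^{-1}H+\eta^2 A)^{-1}$, sandwiches the positive-definite denominator between $(1+L)^{-1}(H^2+\eta^2)$ and $(1+L)(H^2+\eta^2)$ using $I\le A\le(1+L)I$ and order-preservation under Hermitian conjugation and order-reversal under inversion, and reads off the result. The two proofs share the conjugation idea but diverge afterward; yours is more structural and yields the sharp constants $c=(1+L)^{-1}$, $C=1+L$, whereas the paper's vector-level bookkeeping produces constants that are polynomial in $1+L$ but with worse exponents. Both are fully rigorous; your version is cleaner to write and check, while the paper's is perhaps more elementary in that it avoids appealing to operator anti-monotonicity of $X\mapsto X^{-1}$.
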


\subsection{Proof of Proposition \ref{prop:overlap} \label{sec:ovb} via a two-resolvent local law for deformed Wigner matrices}
Fix indices $i,j\le N^{\varepsilon_0}$. The following argument holds uniformly for all such $i,j$. First of all, let $\rho_{sc}(x)=\frac{1}{2\pi}\sqrt{[4-x^2]_+}$ be the semicircular distribution. We rescale it as follows
\begin{equation*}
\rho^{(N-k)}(x):=(1-k/N)^{-1/2}\rho_{sc}\left( (1-k/N)^{-1/2}x\right)
\end{equation*} 
and define the \emph{classical eigenvalue locations} by
\begin{equation}
	\int_{\gamma_j^{(N-k)}}^{+\infty}\rho^{(N-k)}(x)\dif x=\frac{j}{N-k},\quad j\in[N-k].
\label{eq:quantiles}
\end{equation}
Recall the definition of the matrix $F$ given above \eqref{eq:2G_goal} and denote $z_l:=E_l+\ii\eta_l$, $l=1,2$. Using the spectral decomposition of $(H-z_1)^{-1}$ and $F$ we get that
\begin{equation}
\langle\Im (H-z_1)^{-1}\Im F(z_2)  \rangle\ge \frac{1}{N}\cdot \frac{\eta_1}{(\lambda_i^{(N)}-E_1)^2+\eta_1^2}\cdot \frac{\eta_2}{(\lambda_j^{(N-k)}-E_2)^2+\eta_2^2}\big| \langle {\bm w}_i^{(N)}, {\bm w}_j^{(N-k)}\rangle\big|^2.
\label{eq:overlap_2G_bound}
\end{equation}
In \eqref{eq:overlap_2G_bound}, we now take $E_1:=\gamma_i^{(N)}$, $E_2:=\gamma_j^{(N-k)}$ and $\eta_1=\eta_2:=N^{-2/3+\xi_0}$ for some small fixed constant $\xi_0>0$, and arrive at
\begin{equation}
\big| \langle {\bm w}_i^{(N)}, {\bm w}_j^{(N-k)}\rangle\big|^2 \prec N^{-1/3} \langle\Im (H-z_1)^{-1}\Im F  \rangle.
\label{eq:overlap_2G_reduced}
\end{equation}
Here we additionally used the standard rigidity bound from \cite[Theorem 7.6]{loc_sc_gen} which uniformly in the spectrum asserts that
\begin{equation} \label{eq:rigid}
\vert\lambda_i^{(N)}-\gamma_i^{(N)}\vert\prec N^{-2/3},\quad \vert\lambda_j^{(N-k)}-\gamma_j^{(N-k)}\vert \prec N^{-2/3}.
\end{equation}
Next we rewrite the right-hand-side~of \eqref{eq:overlap_2G_reduced} using \eqref{eq:2G_goal} and remove $\Im D_k$ by the means of Lemma \ref{lem:Im} applied\footnote{We point out that Lemma \ref{lem:D} implies that $S=\eta_1^{-1}\Im D_k$ satisfies the conditions of Lemma \ref{lem:Im}.} to $S:=\eta_1^{-1}\Im D_k$:
\begin{equation}
\big| \langle {\bm w}_i^{(N)}, {\bm w}_j^{(N-k)}\rangle\big|^2\prec N^{-1/3}N^{-1}{\rm Tr}\left[ \Im \left(H^{(N-k)}-\Re D_k - z_1\right)^{-1} \Im (H^{(N-k)}-z_2)^{-1} \right].
\label{eq:ov<2G_fin}
\end{equation}
We claim that
\begin{equation}
\left\langle \Im \left(H^{(N-k)}-\Re D_k - z_1\right)^{-1} \Im (H^{(N-k)}-z_2)^{-1} \right\rangle\prec \left( \left\langle(\Re D_k - \left\langle\Re D_k\right\rangle)^2\right\rangle  \right)^{-1},
\label{eq:2G_special}
\end{equation}
where $\langle\cdot\rangle$ stands for the normalized trace of an $(N-k)\times (N-k)$ matrix, i.e. $\langle\cdot\rangle=(N-k)^{-1}\mathrm{Tr}[\cdot]$. The proof of \eqref{eq:2G_special} relies on the two-resolvent local law from Proposition \ref{prop:2G} and is provided in Appendix~\ref{subsec:2G_special}.
Finally, applying \eqref{eq:D_HS} to bound the right-hand-side of \eqref{eq:2G_special} from above and combining this with \eqref{eq:ov<2G_fin}, we complete the proof of Proposition \ref{prop:overlap}. \qed

\subsection{Proof of the lemmas from Section \ref{subsec:prelim}} \label{subsec:prelimproof}
We conclude Section \ref{sec:overlap} by proving Lemmas \ref{lem:XX^*} -- \ref{lem:Im}.

\begin{proof}[Proof of Lemma \ref{lem:XX^*}]
Fix a small constant $0<\zeta<\varepsilon_0/4$ and introduce the truncation
\begin{equation*}
	\widehat{h}_{ij}:=h_{ij}\bm{1}_{\vert\sqrt{N} h_{ij}\vert\le N^{\zeta}},\quad \widehat{X}_k:= \left( \widehat{h}_{ij}\right)_{\substack{k+1\le i\le N\\ 1\le j\le k}}\quad\text{and}\quad \sigma_N^2:=N\E \vert \widehat{h}_{ij}\vert^2= N\E\left\vert\chi_{\mathrm{od}}\bm{1}_{\chi_{\mathrm{od}\le N^\zeta}}\right\vert^2.
\end{equation*}
With this normalization $\sigma_N^2=1+o(1)$ for large $N$. Using \eqref{eq:momass} we see that
\begin{equation}
	\mathbf{P}\left[ X_k\neq \widehat{X}_k\right] \le C_p\frac{Nk}{N^{p\zeta}}
	\label{eq:cutoff}
\end{equation}
for any $p\in\N$, and the right-hand-side of this inequality can be made smaller than $N^{-D}$ for any fixed $D>0$ by choosing $p$ sufficiently large.  Applying \cite[Theorem 5.41]{sample_cov_classical} to the matrix $\frac{\sqrt{N}}{\sigma_N} \widehat{X}_k$ we get that
\begin{equation*}
	\sqrt{N-k}-t\sqrt{\frac{kN^{2\zeta}}{\sigma_N^2}} \le s_{\mathrm{min}}\left(\frac{\sqrt{N}}{\sigma_N} \widehat{X}_k\right)\le s_{\mathrm{max}}\left(\frac{\sqrt{N}}{\sigma_N} \widehat{X}_k\right)\le \sqrt{N-k}+t\sqrt{\frac{kN^{2\zeta}}{\sigma_N^2}} 
\end{equation*}
for any $t>0$ with probability at least $1-2k\mathrm{exp}\{-c_0t^2\}$, where $c_0>0$ is an absolute constant and $s_{\mathrm{min}}$, $s_{\mathrm{max}}$ are the lowest and the largest singular values of $\frac{\sqrt{N}}{\sigma_N} \widehat{X}_k$. Here we additionally used that the $\ell^2$ norms of the rows of $\frac{\sqrt{N}}{\sigma_N} \widehat{X}_k$ are upper bounded by $\sqrt{kN^{2\zeta}/\sigma_N^2}$ almost surely. Therefore,
\begin{equation}
	1+o(1)-tN^{-\varepsilon_0/2}\le s_{\mathrm{min}}\left(\widehat{X}_k\right)\le s_{\mathrm{max}}\left(\widehat{X}_k\right)\le 1+o(1)+tN^{-\varepsilon_0/2}
	\label{eq:sing_bound}
\end{equation}
with probability at least $1-2k\mathrm{exp}\{-c_0t^2\}$. In particular, \eqref{eq:sing_bound} applied for $t:=N^{\varepsilon_0/4}$ implies that $\lVert \widehat{X}^*_k\widehat{X}_k - I_k\rVert\le c$ with very high probability for sufficiently large $N$. Together with \eqref{eq:cutoff} this finishes the proof of Lemma \ref{lem:XX^*}.
\end{proof}

\begin{proof}[Proof of Lemma \ref{lem:D}]
We start with the proof of \eqref{eq:D_bounds}. We have
\begin{equation}
	\left\lVert D_k + \frac{1}{z_1}X_kX_k^*\right\rVert = \left\lVert X_k \frac{H^{[k]}}{z_1(H^{[k]}-z_1)} X_k^*\right\rVert \le \lVert X_k\rVert^2 \left\lVert \frac{H^{[k]}}{z_1(H^{[k]}-z_1)}\right\rVert
	\label{eq:D-XX}
\end{equation}
Since $\sqrt{N/k}H^{[k]}$ is a $k\times k$ Wigner matrix, it holds that $\lVert H^{[k]}\rVert\le 3\sqrt{k/N}$ with very high probability. Therefore,
\begin{equation}
	\left\lVert \frac{H^{[k]}}{z_1(H^{[k]}-z_1)}\right\rVert \le \frac{6}{\delta^2} \sqrt{\frac{k}{N}}\le N^{-\varepsilon_0/3}
	\label{eq:mid_G_bound}
\end{equation}
for sufficiently large $N$. In combination with the bound $\lVert X_k\rVert\le \lVert X_k^*X_k\rVert^{1/2}$ and with Lemma \ref{lem:XX^*} this shows that the left-hand-side of \eqref{eq:D-XX} is upper bounded by $N^{-\varepsilon_0/3}$. Using Lemma \ref{lem:XX^*} again we conclude
\begin{equation*}
	\lVert \Re D_k\rVert \le \vert \Re (z^{-1})\vert\cdot \lVert X_kX_k^*\rVert + N^{-\varepsilon_0/3} \le \vert E_1\vert^{-1}(1+c).
\end{equation*}
Next, in order to verify the inequality $\Im D_k>0$ it is sufficient to note that
\begin{equation*}
	\Im D_k = X_k\Im \left[ (H^{[k]}-z_1)^{-1} \right] X_k^*\ge 0.
\end{equation*} 
The proof of the rest of \eqref{eq:D_bounds} is analogous to the argument presented above and thus is omitted.

Now we prove \eqref{eq:D_HS}. Denote for short $G^{[k]}:=(H^{[k]}-z_1)^{-1}$. Using \eqref{eq:mid_G_bound} and Lemma \ref{lem:XX^*} we get
\begin{equation*}
	\begin{split}
		{\rm Tr}\, \Re D_k = &-\Re(z_1^{-1}) {\rm Tr}\left[ X_kX_k^*\right] +  \mathrm{Tr} \left[ \Re\left(G^{[k]}+z_1^{-1}\right)X_k^*X_k\right]\\
		= & -\Re(z_1^{-1}) {\rm Tr}\left[ X_kX_k^*\right] + \mathcal{O}\left( k\left\lVert \Re\left(G^{[k]}+z_1^{-1}\right)X_k^*X_k \right\rVert\right)\sim k.
	\end{split}
\end{equation*}
Similarly we have ${\rm Tr} \left( \Re D_k\right)^2\sim k$. This finishes the proof of \eqref{eq:D_HS}.
\end{proof}

\begin{proof}[Proof of Lemma \ref{lem:Im}]
Firstly notice that $\lVert (H-\ii\eta(I+S))^{-1}\rVert\le 1/\eta$. Indeed, we have
\begin{equation}
	\begin{split}
		&\lVert (H-\ii\eta(I+S))^{-1}\rVert = \left\lVert (I+S)^{-1/2}\left((I+S)^{-1/2}H(I+S)^{-1/2}-\ii\eta\right)^{-1}(I+S)^{-1/2}\right\rVert\\
		&\quad \le \left\lVert \left((I+S)^{-1/2}H(I+S)^{-1/2}-\ii\eta\right)^{-1}\right\rVert\le 1/\eta.
	\end{split}
	\label{eq:G_S}
\end{equation}
Next, in order to verify \eqref{eq:Im} we need to show that
\begin{equation*}
	\langle \bfv, \Im (H-\ii\eta)^{-1}\bfv\rangle\sim \langle \bfv, \Im (H-\ii\eta(I+S))^{-1}\bfv\rangle  
\end{equation*}
uniformly in $\bfv\in\C^N$. This is equivalent to
\begin{equation}
	\langle(H-\ii\eta)^{-1} \bfv, (H-\ii\eta)^{-1} \bfv\rangle \sim \langle(H-\ii\eta(I+S))^{-1} \bfv, (I+S)(H-\ii\eta(I+S))^{-1} \bfv\rangle.
	\label{eq:Im_vect} 
\end{equation}
Denote $\bfw:= (H-\ii\eta(I+S))^{-1} \bfv$. Then \eqref{eq:Im_vect} reads as
\begin{equation}
	\lVert \bfw-\ii\eta(H-\ii\eta)^{-1}S\bfw\rVert^2 \sim \langle \bfw, (I+S)\bfw\rangle
	\label{eq:Im_vect1}
\end{equation}
The left-hand-side of \eqref{eq:Im_vect1} has an upper bound of order $\lVert \bfw\rVert^2$, since $\lVert (H-\ii\eta)^{-1}\rVert\le 1/\eta$. On the other hand, the right-hand-side of \eqref{eq:Im_vect1} is of order $\lVert \bfw\rVert^2$. Therefore, we have shown that
\begin{equation*}
	\langle(H-\ii\eta)^{-1} \bfv, (H-\ii\eta)^{-1} \bfv\rangle \lesssim \langle(H-\ii\eta(I+S))^{-1} \bfv, (I+S)(H-\ii\eta(I+S))^{-1} \bfv\rangle.
\end{equation*}
The proof of \eqref{eq:Im_vect} in the opposite direction goes exactly along the same lines, we only need to take $\bfw:= (H-\ii\eta)^{-1} \bfv$ and to use \eqref{eq:G_S}. This finishes the proof of Lemma \ref{lem:Im}. 
\end{proof}

\section{Weakly correlated Dyson Brownian motions at the edge: Proof of Proposition \ref{prop:DBMindep}}
\label{sec:wcDBM}

In this section, we prove Proposition \ref{prop:DBMindep} by constructing (truly) \emph{independent} comparison processes ${\bm \mu}^{(l)}(t)$, $l \in [2]$, and we will couple them to the eigenvalues of $H^{(N)}, H^{(N-k)}$, closely following\footnote{We skip several technical details such as regularization of the driving process in \eqref{eq:init_DBM} and discuss only essential steps of construction. For more details see \cite{macroCLT_complex, macroCLT_real}.} \cite[Sec.~7]{macroCLT_complex} and \cite[Sec.~7]{macroCLT_real}. 
First, we introduce a small $N$-independent cut-off parameter $0<\omega_A\le 1$ which will be chosen later in Proposition \ref{prop:DBM}. In order to simplify the notations we assume that $N^{\omega_A}\in\N$. Consider the $2N^{\omega_A}$-dimensional process
\begin{equation}
\underline{{\bm b}}(t):=\left(b^{(1)}_1(t),\ldots, b^{(1)}_{N^{\omega_A}}(t),b^{(2)}_1(t),\ldots, b^{(2)}_{N^{\omega_A}}(t)\right)
\end{equation} 
with $b_i^{(l)}$ from \eqref{eq:BM_corr}, which is \emph{not} a standard $2N^{\omega_A}$-dimensional Brownian motion due of the 
non-trivial correlations between the first half and the second half of the vector $\underline{{\bm b}}(t)$; cf. \eqref{eq:BM_corr}. 
However, $\underline{{\bm b}}(t)$ is still a martingale. Moreover, by Doob's martingale representation theorem from \cite[Theorem 18.12]{Kallenberg} there exists a standard $2N^{\omega_A}$-dimensional Brownian motion $\underline{{\bm \beta}}(t)$ such that
\begin{equation}
\underline{{\bm b}}(t) = \sqrt{C(t)}\underline{\bm \beta}(t),\quad \underline{\bm \beta}(t)=\left(\beta_1^{(1)}(t),\ldots, \beta^{(1)}_{N^{\omega_A}}(t),\beta^{(2)}_1(t),\ldots, \beta^{(2)}_{N^{\omega_A}}(t)\right),
\end{equation}
where $C(t)$ is an $N^{2\omega_A}\times N^{2\omega_A}$ non-negative matrix defined by
\begin{equation*}
C_{ij}(t) := \Theta^{l_1l_2}_{\mathfrak{i}\mathfrak{j}}(t), \quad i=(l_1-1)N^{\omega_A}+\mathfrak{i},\, j=(l_2-1)N^{\omega_A}+\mathfrak{j} 
\end{equation*}
and $\Theta_{\mathfrak{i} \mathfrak{j}}^{l_1 l_2}$ was introduced in \eqref{eq:BM_corr}.

Next, consider independent standard Brownian motions $\beta^{(1)}_i$ and $\beta^{(2)}_j$ for $i\in [N^{\omega_A}+1,N_1]$ and $j\in [N^{\omega_A}+~1, N_2]$. These Brownian motions are also independent of the collection $\lbrace b^{(l)}_j\rbrace_{j=1}^{N_l}$ for $l=1,2$. Let $W_G^{(1)}$ and $W_G^{(2)}$ be independent GOE/GUE matrices of sizes $N_1\times N_1$ and $N_2\times N_2$ respectively, which are further assumed to be independent from all previously defined processes. 

Finally, define the comparison process ${\bm\mu}^{(l)}_t$ as the strong solution to the DBM
\begin{equation}
\dif \mu^{(l)}_i(t) = \frac{\dif \beta^{(l)}_i(t)}{\sqrt{\beta N}} + \frac{1}{N}\sum_{j\neq i} \frac{1}{\mu^{(l)}_j(t) -\mu^{(l)}_i(t)}\dif t,\quad  i\in [N_l],
\label{eq:comp_proc}
\end{equation}
with initial data ${\bm\mu}^{(l)}(0)$ being the eigenvalues of $\sqrt{N_l/N}\,W^{(l)}_G$. Now we are ready to state the main technical result of this section, from which we immediately deduce Proposition \ref{prop:DBMindep}. 

\begin{proposition}[Comparison with independent eigenvalues]\label{prop:DBM} Fix $0<\omega_A<\omega_0/2<1$ and $\omega_{{\rm ov}}>0$. Denote $t_1:=N^{-1/3+\omega_0}$ and assume that
\begin{equation}
\sup_{t\in[0,t_1]}C_{ij}(t)\prec N^{-\omega_{{\rm ov}}}, \quad i,j\le N^{\omega_A}.
\label{eq:ass_overlap}
\end{equation}
Then there exist $\omega, \widehat{\omega}>0$ with $\widehat{\omega}\le \omega/10\le \omega_0/100$, such that, with very high probability, 
\begin{equation}
\vert \lambda^{(l)}_i(t_1)-\mu^{(l)}_i(t_1)\vert \le N^{-2/3-\omega},\quad i\le N^{\widehat{\omega}},\quad l\in[2]\,. 
\label{eq:coulpl_aim}
\end{equation}
\end{proposition}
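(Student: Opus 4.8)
\textbf{Proof strategy for Proposition \ref{prop:DBM}.}

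The plan is to follow the coupling scheme of \cite[Sec.~7]{macroCLT_complex} and \cite[Sec.~7]{macroCLT_real}, adapted to the edge regime. The key point is that, by \eqref{eq:BM_corr}, the driving Brownian motions $b_i^{(l)}(t)$ for $i \le N^{\omega_A}$ across the two levels have cross-correlations $\Theta_{ij}^{12}(t)$ which, by the overlap bound \eqref{eq:overlap_bound} (via the hypothesis \eqref{eq:ass_overlap}), are at most $N^{-\omega_{\rm ov}}$ up to time $t_1$. We interpolate between the true (correlated) DBM \eqref{eq:init_DBM} and the independent comparison DBM \eqref{eq:comp_proc}, and use the parabolic regularity (finite-speed-of-propagation / homogenization) estimates for DBM at the edge, as developed in \cite{Bou_extreme, landon2017edge}. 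Concretely, I would first couple $\bm\lambda^{(l)}$ to an auxiliary process with GOE/GUE initial data but still using the correlated driving noise $\underline{\bm b}(t)$, showing via the edge rigidity and level-repulsion estimates that the few top eigenvalues of $H_t^{(N_l)}$ and of this auxiliary process differ by at most $N^{-2/3-\omega}$ for $t = t_1 = N^{-1/3+\omega_0}$; this is essentially a reference to the edge universality machinery and uses only the $\Theta^{11} = \Theta^{22} = \delta_{ij}$ structure, so it is identical for both processes.

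Second, and this is the heart of the argument, I would compare the correlated driving process $\underline{\bm b}(t)$ with the genuinely independent one. Writing $\underline{\bm b}(t) = \sqrt{C(t)}\,\underline{\bm\beta}(t)$ with $C(t) = I + E(t)$ where $\|E(t)\| \prec N^{\omega_A} \cdot N^{-\omega_{\rm ov}}$ (off-diagonal block of size $N^{\omega_A}$, each entry $\prec N^{-\omega_{\rm ov}}$), the perturbation $E(t)$ is small provided $\omega_A < \omega_{\rm ov}$, which can be arranged since $\omega_{\rm ov}$ comes from the overlap bound with $k \ge N^{2/3+\varepsilon_0}$ giving $\omega_{\rm ov} \sim \varepsilon_0$. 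One then Gronwall-estimates the difference of the two DBM flows driven by $\sqrt{C(t)}\,\underline{\bm\beta}$ versus $\underline{\bm\beta}$, controlling the stochastic term by the Burkholder--Davis--Gundy inequality (the quadratic variation of the difference of the noises being $O(\|E\|^2)$ per unit time, integrated over $t_1$) and the drift term by the convexity of the Coulomb interaction (the self-consistent repulsion contracts differences), together with a short-range approximation so that the drift only couples nearby eigenvalues and the long-range part is handled by rigidity. This yields $|\lambda_i^{(l)}(t_1) - \mu_i^{(l)}(t_1)| \le N^{-2/3-\omega}$ for $i \le N^{\widehat\omega}$, with $\widehat\omega$ chosen small enough ($\widehat\omega \le \omega/10 \le \omega_0/100$) so that all the rigidity and level-repulsion inputs are valid uniformly over the relevant index range and the accumulated errors stay below the target scale.

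The main obstacle I anticipate is the interplay between the time scale $t_1 = N^{-1/3+\omega_0}$ and the edge fluctuation scale $N^{-2/3}$: one needs $t_1$ large enough for the homogenization/relaxation estimates to kick in and wash out the memory of the initial conditions at the edge (hence $t_1 \gg N^{-1/3}$, i.e.\ $\omega_0 > 0$), yet the Gronwall control of the noise-perturbation must produce an error strictly smaller than $N^{-2/3}$ rather than merely comparable to it — this forces a careful bookkeeping of how the factor $t_1$ enters the BDG bound for the stochastic difference, and is the reason for the quantitative gain $N^{-\omega_{\rm ov}}$ in \eqref{eq:ass_overlap} being essential (a mere $o(1)$ bound would not suffice). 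A secondary technical point, flagged in the footnote, is that the driving process $b_i^{(l)}(t)$ in \eqref{eq:init_DBM} is not literally a Brownian motion with deterministic covariance — the overlaps $\Theta_{ij}^{12}(t)$ are themselves random and time-dependent — so one must first run a short-time regularization (as in \cite{macroCLT_complex, macroCLT_real}) to replace the eigenvalue process by one whose driving noise has the required structure, absorbing the regularization error into the $N^{-2/3-\omega}$ budget; I would treat this as a black box, citing the existing construction.
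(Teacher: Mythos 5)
Your proposal follows the same broad strategy as the paper — couple the true DBM flows to constructed independent DBMs $\mu^{(l)}$, invoking the edge universality machinery of \cite{Bou_extreme, landon2017edge} — but you organize it differently: you propose a two-step coupling (first change the initial data keeping the correlated noise, then change the noise keeping GOE/GUE initial data), whereas the paper runs a single one-parameter interpolation $z_i(t,\alpha)$ in \eqref{eq:z_flow} that interpolates \emph{both} the initial data and the driving noise simultaneously in $\alpha$. Both organizations reduce to the same underlying short-range/homogenization analysis, so the split is largely cosmetic. Your identification of the key quantitative input — $\|E(t)\|\prec N^{\omega_A-\omega_{\rm ov}}$ from the overlap bound, fed through BDG over the time horizon $t_1$ — is the same driving estimate as in the paper, and your remark that a mere $o(1)$ overlap bound would not suffice is on point.

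One thing your sketch understates is the extra technical work that the correlated noise introduces into the \emph{rigidity} input, which the paper isolates in \eqref{eq:m_rigidity}–\eqref{eq:g_rigidity}. The interpolated flow $z_i(t,\alpha)$ (or, in your Step~2, the auxiliary-versus-$\mu$ comparison) is driven by $\alpha\,\dif b_i+(1-\alpha)\,\dif\beta_i$, and for $i\le N^{\omega_A}$ the components $b_i^{(l)}$ and $\beta_j^{(l)}$ acquire nontrivial cross-correlations through the martingale representation $\underline{\bm b}=\sqrt{C(t)}\,\underline{\bm\beta}$, since $\sqrt{C(t)}$ mixes the two levels. This means that the standard single-DBM rigidity (\cite[Theorem~3.1]{HL_rig}) does not apply verbatim: the quadratic-variation estimate in its proof picks up cross-terms between distinct particles, and it is precisely the bound $\omega_A<\omega_{\rm ov}/2$ (you wrote the slightly weaker $\omega_A<\omega_{\rm ov}$, which controls $\|E\|$ but not the cross-terms in the summed quadratic variation) that makes these negligible. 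Relatedly, "Gronwall + convexity of the Coulomb repulsion" is too coarse a description of Step~2: the linearized Coulomb operator is only weakly contracting at the edge fluctuation scale, and what is actually needed is the short-range cutoff of the drift plus the finite-speed-of-propagation and kernel bounds for the associated parabolic propagator (Duhamel representation), exactly as in \cite[Sections~2.2–2.3]{Bou_extreme}. You do reference these ingredients, so the plan is sound, but a literal Gronwall argument as stated would not close — in particular the noise forcing is $O(1)$ for $i>N^{\omega_A}$ and only the propagator localization prevents this from contaminating the top eigenvalues.
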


\begin{proof}[Proof of Proposition~\ref{prop:DBMindep}]
The approximate independence in \eqref{eq:step2} immediately follows from \eqref{eq:coulpl_aim}, the independence of the $\mu_i^{(l)}$'s (by construction of the processes in \eqref{eq:comp_proc}), and the smoothness of $F,G$, which allows us to perturb their arguments by an amount of order $N^{-\omega}$ without essentially changing their value.
\end{proof}

It now remains to give the proof of Proposition \ref{prop:DBM}. 

\begin{proof}[Proof of Proposition \ref{prop:DBM}]
We independently verify \eqref{eq:coulpl_aim} for $l=1$ and $l=2$ without relying on the connection between \eqref{eq:BM_corr} for these two values of parameter $l$. 
Since the arguments for $l=1$ and $l=2$ are identical, we further focus on the case $l=1$ in our analysis of \eqref{eq:BM_corr} and \eqref{eq:comp_proc}. Henceforth, we omit the superscript in $\lambda^{(1)}_j(t)$ and $\mu^{(1)}_j(t)$ for simplicity. Our proof closely follows \cite{Bou_extreme} (see also \cite{landon2017edge}), so in order to avoid redundancy we will show how the main steps in \cite{Bou_extreme} are modified to suit the current setting.

First of all, we interpolate between the weakly correlated DBM \eqref{eq:BM_corr} and the comparison process \eqref{eq:comp_proc} with independent driving Brownian motions by introducing the following one-parameter family of flows: for $\alpha\in [0,1]$ we set (recall $\beta=1$ for real symmetric case, $\beta=2$ for complex Hermitian case)
\begin{equation}
	\dif z_i(t,\alpha) = \alpha \sqrt{\frac{\beta}{2N}} \nc \dif b_i(t) + (1-\alpha)  \sqrt{\frac{\beta}{2N}} \nc \dif\beta_i(t) + \frac{1}{N}\sum_{j\neq i}  \frac{1}{z_j(t,\alpha) - z_i(t,\alpha)}, \quad i\in [N],
	\label{eq:z_flow}
\end{equation}
with initial conditions $z_i(0,\alpha):=\alpha \lambda_i(0) + (1-\alpha)\mu_i(0)$. In particular, $z_i(t,0)=\mu_i(t)$ and $z_j(t,1)=\lambda_j(t)$ for all $t\ge 0$. The process \eqref{eq:z_flow} is well-posed in the sense  particle collisions almost surely do not occur, i.e.
\begin{equation}
	\mathbf{P}\left[z_1(t,\alpha)< z_2(t,\alpha)<\cdots< z_N(t,\alpha), \quad \forall t\in [0,1], \,\forall \alpha\in [0,1]\right]=1. 
	\label{eq:z_well-posed}
\end{equation}
The proof of \eqref{eq:z_well-posed} is analogous to the proof of \cite[Theorem 5.2]{local_stat_addition} adapted to the current context as outlined in \cite[Section 7.6]{macroCLT_real}. We point out that in the real case ($\beta=1$)  to ensure the well-posedness of the interpolation process we need to slightly change the flow in \eqref{eq:z_flow} by dividing the stochastic terms by factor $1+N^{-\omega}$, for a sufficiently small $\omega>0$ (see e.g. \cite[Eq. (4.13)]{local_stat_addition}, \cite[Eq. (7.22)]{macroCLT_real}).\nc

Another important property of \eqref{eq:z_flow} essential for the proof of \eqref{eq:coulpl_aim} is the rigidity. To explain this phenomenon let us denote the Stieltjes transform of the empirical particle density by 
\begin{equation*}
	m_N(z,t,\alpha):=\frac{1}{N}\sum_{j=1}^N \frac{1}{z_j(t,\alpha)-z}.
\end{equation*}
Additionally, let $m_{sc}$ stand for the Stieltjes transform of the semicircular distribution $\rho_{sc}(x)=\frac{1}{2\pi}\sqrt{[4-x^2]_+}$. Recall the definition of $\gamma_j^{(N-k)}$ from \eqref{eq:quantiles} and set $\gamma_j:=\gamma_j^{(N)}$. 

Then for any (large) $L>0$ and (small) $\varepsilon>0$ we have
\begin{align}
	\sup_{\vert\Re z\vert\le L}\sup_{N^{-1+\varepsilon}\le\rho_{sc}(z)\Im z\le L} \sup_{\alpha\in [0,1]}\sup_{t\in[0,t_1]} N\Im z\vert m_N(z,t,\alpha)-m_{sc}(z)\vert \prec 1, \label{eq:m_rigidity}\\
	\sup_{j\le N^{\omega_A}}\sup_{\alpha\in [0,1]}\sup_{t\in [0,t_1]} N^{2/3}\vert z_j(t,\alpha)-\gamma_j\vert\prec 1\label{eq:g_rigidity}.
\end{align}
The rigidity estimate \eqref{eq:g_rigidity} follows as a standard consequence of \eqref{eq:m_rigidity} (see e.g. \cite[Section 5]{EYY12}). The proof of \eqref{eq:m_rigidity} is identical to that of \cite[Theorem 3.1]{HL_rig}. The only modification required is in the proof of \cite[Proposition 3.8]{HL_rig}, as the driving Brownian motions in \eqref{eq:z_flow} are weakly correlated. This correlation introduces cross-terms between different particles, contributing to the quadratic variation in \cite[Eq. (3.33)]{HL_rig} and to the sum in the left-hand-side of \cite[Eq. (3.32)]{HL_rig}. However, this additional terms are negligible, which is ensured by \eqref{eq:ass_overlap} combined with the choice $\omega_A<\omega_{\mathrm{ov}}/2$ (see \cite[Section 4.4]{bourgade2024fluctuations} for a similar argument in the bulk regime).

Having \eqref{eq:z_well-posed}--\eqref{eq:g_rigidity} in hand, the rest of the proof closely follows\footnote{We point out that the DBM in \cite[Eq. (1.8)]{Bou_extreme} compared to \eqref{eq:init_DBM} contains an additional term $-\lambda_i^{(l)}(t)/2$. However, inspecting the proof in \cite[Section 2.2]{Bou_extreme}, it is clear that removing this term does not change anything in the analysis.} \cite[Sections 2.2--2.3]{Bou_extreme}.  The only complication compared to \cite{Bou_extreme} is that an additional term containing the quadratic variation process $\dif b_i-\dif\beta_i$ appears in the right-hand-side of \cite[Eq. (2.3)]{landon2017edge} (see also \cite[Lemma 4.19]{bourgade2024fluctuations} for an analogous term in a slightly different model)\nc. However, a similar estimate has been already conducted for $\dif \xi_{1,i}$ in \cite[Eqs. (7.92)--(7.95)]{macroCLT_complex}  and in the proof of \cite[Lemma 4.20]{bourgade2024fluctuations}\nc. This completes the proof of Proposition \ref{prop:DBM}.
\end{proof}

\section{Green function comparison: Proof of Propositions \ref{prop:GFTeigenval} and \ref{prop:GFTeigenval2}}
\label{sec:GFT}

In this section, we provide the proofs of Propositions \ref{prop:GFTeigenval} and \ref{prop:GFTeigenval2} based on a Green function comparison (GFT) argument. At the spectral edge, it is a well established approach (see, e.g., \cite[Section 17]{erdHos2017dynamical} or \cite{EYY12}) to directly compare the matrix of interest with the corresponding GOE/GUE ensemble (a so called \emph{long time GFT}). We follow this approach here as well, the only novelty being that now the resolvents of \emph{two different} Wigner matrices are involved in the GFT (see Proposition \ref{prop:GFT} below).

To prepare the formulation of the GFT, let $H^{(N)}$ be a Wigner matrix and consider the matrix valued Ornstein-Uhlenbeck process
\begin{equation} \label{eq:OU}
\dif \widehat{H}^{(N)}_t =  - \frac{\widehat{H}^{(N)}_t}{2} \dif t+\frac{\dif \widehat{B}^{(N)}_t}{\sqrt{N}} \,, \qquad \widehat{H}_0^{(N)} = H^{(N)} \,, 
\end{equation}
where $\widehat{B}^{(N)}_t$ is a real symmetric or complex Hermitian standard matrix valued Brownian motion of the same symmetry class as $H^{(N)}$ (see, e.g., below \eqref{eq:H_flow}) and we observe that $\widehat{H}^{(N)}_\infty = \mathrm{G}\beta \mathrm{E}$. Moreover, similarly to \eqref{eq:H_min_flow}, we consider the time-dependent analog of the minor process defined in \eqref{eq:minordef} and denote the resolvent of $\widehat{H}^{(N-k)}_t$ for $k \in [N]$ at a spectral parameter $z \in \C \setminus \R$ by $G_t^{(N-k)}(z) := \big(\widehat{H}_t^{(N-k)} - z\big)^{-1}$.

\begin{proposition}[Green function comparison at the edge] \label{prop:GFT}
Fix $\delta > 0$. 	Take $N \in \N$ and $k \in [N]$ with $k \le N^{1 - \delta}$. 
Consider $f,\widetilde{f} : \R\to \R$ satisfying 
\begin{equation}
\max_{x \in \R} |f^{(\alpha)}(x)| (|x| +1)^{-C_1}  + \max_{x \in \R} |\widetilde{f}^{(\alpha)}(x)| (|x| +1)^{-C_1}\le C_1 \,, \ i \in [2]\,, \ \alpha = 1,2,3 \,, 
\end{equation}
for some constant $C_1 > 0$. Then there exists $\varepsilon_0 > 0$, depending only on $C_1$, such that we have the following: For any $\varepsilon \in (0,\varepsilon_0)$ and $E_i, \widetilde{E}_i \in \R$, $i \in [2]$ satisfying 
\begin{equation*}
|E_i - 2| \le C N^{-2/3 + \varepsilon} \quad \text{and} \quad |\widetilde{E}_i - 2\sqrt{1 - k/N}| \le C N^{-2/3 + \varepsilon} \,, \quad i \in [2] \,, 
\end{equation*}
and setting $\eta := N^{-2/3 - \varepsilon}$, we have 
\begin{equation}
	\begin{split}
	\label{eq:gftneed}
\bigg| &\E f\bigg( N \int_{E_1}^{E_2} \dif x \, \langle \Im G_t^{(N)}(x +  \ii \eta)\rangle\bigg) \widetilde{f}\bigg( N \int_{\widetilde{E}_1}^{\widetilde{E}_2} \dif x \, \langle \Im G_t^{(N-k)}(x +  \ii \eta)\rangle\bigg)  \\
&- \E f\bigg( N \int_{E_1}^{E_2} \dif x \, \langle \Im G_s^{(N)}(x +  \ii \eta)\rangle\bigg) \widetilde{f}\bigg( N \int_{\widetilde{E}_1}^{\widetilde{E}_2} \dif x \, \langle \Im G_s^{(N-k)}(x +  \ii \eta)\rangle\bigg) \bigg| \le C N^{-1/6 + C \varepsilon}
	\end{split}
\end{equation}
for all $t,s \in [0, \infty]$, for some constant $C$ and large enough $N$. 
\end{proposition}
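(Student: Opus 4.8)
\emph{The plan} is to run a \emph{long-time Green function comparison} along the Ornstein--Uhlenbeck flow \eqref{eq:OU}. Since $\widehat H^{(N)}_\infty$ is exactly $\mathrm{G}\beta\mathrm{E}$ and its last $(N-k)$-minor $\widehat H^{(N-k)}_\infty$ is again a Gaussian ensemble (up to the harmless $\sqrt{1-k/N}$ rescaling), while $\widehat H^{(N)}_0=H^{(N)}$, it suffices to control, uniformly in $\tau\in[0,\infty)$, the derivative of
\[
\Phi(\tau):=\E\, f\Big( N\!\int_{E_1}^{E_2}\!\langle\Im G_\tau^{(N)}(x+\ii\eta)\rangle\dif x\Big)\,\widetilde f\Big( N\!\int_{\widetilde E_1}^{\widetilde E_2}\!\langle\Im G_\tau^{(N-k)}(x+\ii\eta)\rangle\dif x\Big),
\]
and then integrate over the interval between $t$ and $s$ to get \eqref{eq:gftneed} (the integral $\int_t^s e^{-3\tau/2}\dif\tau$ stays bounded even for $s=\infty$). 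Write $X_\tau,Y_\tau$ for the arguments of $f,\widetilde f$. Two facts will be used repeatedly and hold \emph{uniformly along the flow}, since the OU evolution preserves the Wigner class: (i) rigidity of the eigenvalues of $\widehat H^{(N)}_\tau$, $\widehat H^{(N-k)}_\tau$ near the edges $2$ and $2\sqrt{1-k/N}$, which gives $X_\tau,Y_\tau=\mathcal O_\prec(N^{C\varepsilon})$ (the $[E_i]$-window contains only $\mathcal O_\prec(N^{C\varepsilon})$ eigenvalues) so that $f^{(\alpha)}(X_\tau),\widetilde f^{(\alpha)}(Y_\tau)=\mathcal O_\prec(N^{C\varepsilon})$ by the polynomial growth of $f^{(\alpha)},\widetilde f^{(\alpha)}$; and (ii) the \emph{entrywise} edge local law at scale $\eta=N^{-2/3-\varepsilon}$ (admissible since $N\eta\gg1$), giving $\big|(G^{(N)}_\tau)_{ab}-m_{sc}\delta_{ab}\big|\prec N^{-1/3+C\varepsilon}$, hence $\Im(G^{(N)}_\tau)_{ab}\prec N^{-1/3+C\varepsilon}$ for all $a,b$, and the analogous bounds for $G^{(N-k)}_\tau$ at its edge; the hypothesis $k\le N^{1-\delta}$ guarantees $N-k\sim N$, so these minor estimates hold on the same scale.

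Differentiating $\Phi$ via It\^o's formula applied to \eqref{eq:OU}, I would split each OU-evolved entry as $\widehat h_{ab}(\tau)=e^{-\tau/2}h_{ab}+\sqrt{1-e^{-\tau}}\,g_{ab}$ with $g_{ab}$ an independent Gaussian of variance $1/N$, use Gaussian integration by parts in the $g_{ab}$, and run a cumulant expansion in the $h_{ab}$ truncated after the second cumulant. Because $\mathrm{G}\beta\mathrm{E}$ is the stationary law, the first- and second-moment contributions cancel exactly against the diffusion term; the chain-rule factors $e^{-\tau/2}$ left over combine with the (third-moment) truncation remainder to yield
\[
|\Phi'(\tau)|\ \lesssim\ e^{-3\tau/2}\,N^{-3/2}\sum_{a,b}\ \sup\ \big|\partial_{h_{ab}}^{3}\big(f(X_\tau)\,\widetilde f(Y_\tau)\big)\big|,
\]
the supremum over admissible values of $h_{ab}$ (on the exceptional realizations where the local law is unavailable one falls back on $\|G_\tau\|\le\eta^{-1}$ and the moment assumption), so that only three derivatives of $f,\widetilde f$ are needed, matching the regularity assumed. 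The structural fact driving the power count is that, using $G^2(z)=\partial_z G(z)$,
\[
\partial^{m}_{h_{ab}}X_\tau=\Im\Big[\,P_m\big((G^{(N)}_\tau)_{aa},(G^{(N)}_\tau)_{bb},(G^{(N)}_\tau)_{ab};z\big)\Big]_{z=E_1+\ii\eta}^{z=E_2+\ii\eta},
\]
with $P_m$ a polynomial of degree $m$ and no constant term, and likewise for $Y_\tau$ in terms of the entries of $G^{(N-k)}_\tau$, which depends only on $\{h_{ab}:a,b\ge k+1\}$. Since the imaginary part of every monomial of $P_m$ carries at least one factor $\Im(G_\tau)_{\ast\ast}\prec N^{-1/3+C\varepsilon}$, each $\partial^m_{h_{ab}}X_\tau$ and $\partial^m_{h_{ab}}Y_\tau$ is $\mathcal O_\prec(N^{-1/3+C\varepsilon})$, uniformly in $a,b$.

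Expanding $\partial^3_{h_{ab}}(f(X_\tau)\widetilde f(Y_\tau))$ by the Leibniz rule, the worst contribution to $\sum_{a,b}\sup|\cdot|$ over the $\sim N^2$ index pairs comes from the terms carrying exactly two derivative factors — $f''(X_\tau)\,\partial^2_{h_{ab}}X_\tau\,\partial_{h_{ab}}X_\tau$ and its mixed analogues involving $Y_\tau$ — each of size $\mathcal O_\prec(N^{-2/3+C\varepsilon})$; hence $N^{-3/2}\sum_{a,b}\sup|\partial^3_{h_{ab}}(\cdots)|\prec N^{-3/2+2+C\varepsilon}\cdot N^{-2/3}=N^{-1/6+C\varepsilon}$, and $\int_0^\infty e^{-3\tau/2}\dif\tau<\infty$ completes the estimate. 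The two-resolvent aspect is essentially cosmetic: the cumulant expansion is with respect to the common entries $\{h_{ab}\}$, and the only new feature — that the minor resolvent ignores the first $k$ rows and columns — merely restricts the ``mixed'' derivative terms to index pairs with $a,b\ge k+1$, still $\lesssim N^2$ of them.

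I expect the main obstacle to be technical rather than conceptual: making the above power count fully rigorous requires the entrywise local law and rigidity to hold \emph{uniformly along the entire OU flow} and, simultaneously, at the sub-optimal scale $\eta=N^{-2/3-\varepsilon}$ at \emph{two} distinct spectral edges; one must carefully use the $\Im$ in the representation of $\partial^m_{h_{ab}}X_\tau$ above to keep every derivative of size $N^{-1/3+C\varepsilon}$ (a bound on the resolvent entries alone is too weak at this scale), and control the exceptional set where the local law fails through the deterministic estimate $\|G_\tau\|\le\eta^{-1}$ together with the moment assumption on the entries. With these inputs in place, the computation reduces to the familiar edge-GFT power count, now carried out for a product of functionals of two coupled Wigner matrices.
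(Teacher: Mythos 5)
The proposal follows essentially the same strategy as the paper --- a long-time Green function comparison along the Ornstein--Uhlenbeck flow with a cumulant expansion whose second cumulant cancels the diffusion, leaving the third cumulant as the leading remainder. It also correctly uses the telescoping $\int(G^2)_{ba}\,\dif x = G_{ba}\big|_{E_1}^{E_2}$ to represent $\partial^m_{h_{ab}}X_\tau$ as a boundary polynomial in the entries of $G$, and correctly deduces $\partial^m_{h_{ab}}X_\tau \prec N^{-1/3+C\varepsilon}$, uniformly in $a,b$ and $m\ge 1$.

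However, there is a genuine gap in the decisive power count. You claim that the worst term in $\partial^3_{h_{ab}}\big(f(X_\tau)\,\widetilde f(Y_\tau)\big)$ carries exactly two derivative factors (such as $f''(X_\tau)\,\partial^2_{h_{ab}}X_\tau\,\partial_{h_{ab}}X_\tau$), hence is $\prec N^{-2/3+C\varepsilon}$. This is wrong: by Leibniz and Fa\`a di Bruno, the term $f'(X_\tau)\big(\partial^3_{h_{ab}}X_\tau\big)\,\widetilde f(Y_\tau)$ (and symmetrically $f(X_\tau)\,\widetilde f'(Y_\tau)\partial^3_{h_{ab}}Y_\tau$) carries only \emph{one} derivative factor, of size $\prec N^{-1/3+C\varepsilon}$ --- this does not improve as $m$ grows since, e.g., $\partial^3_{h_{ab}}X$ contains the monomial $\Im\big[G_{ab}G_{aa}G_{bb}\big]\prec N^{-1/3}$ which has exactly one small factor. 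With the bound written as $N^{-3/2}\sum_{a,b}\sup|\cdots|$, these one-derivative-factor terms contribute
$N^{-3/2}\cdot N^2\cdot N^{-1/3+C\varepsilon}=N^{1/6+C\varepsilon}$,
which has the \emph{wrong sign} of exponent and breaks the argument. This is not a two-matrix peculiarity: the same obstruction is already present in the single-matrix edge GFT.

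The missing ingredient is precisely the resummation/isotropic step that the paper performs: one must keep $\sum_{a,b}$ inside the expectation and exploit cancellation, writing
\[
\sum_{a,b}\E\big[f'(X)\,\widetilde f(Y)\,\partial^3_{h_{ab}}X\big]
   = \E\Big[f'(X)\,\widetilde f(Y)\,\sum_{a,b}\partial^3_{h_{ab}}X\Big],
\]
and then bounding $\sum_{a,b}\partial^3_{h_{ab}}X$ via an isotropic resummation: e.g.\ $\sum_{a,b}G_{ab}=G_{\bm 1\bm 1}\prec N^{2/3+C\varepsilon}$ together with $G_{aa}=m_{\rm sc}+\mathcal O_\prec(N^{-1/3})$, which gives $\sum_{a,b}\Im[G_{ab}G_{aa}G_{bb}]\prec N^{4/3+C\varepsilon}$. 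This is a genuine $N^{1/3}$ gain over the $N^2\cdot N^{-1/3}=N^{5/3}$ obtained by the sum-of-sup (or Cauchy--Schwarz), and is exactly what yields the claimed $N^{-1/6+C\varepsilon}$ bound. In the paper this is the step $\sum_{j,a,b}G_{ja}G_{bj}=(G^2)_{\bm 1\bm 1}$ combined with $\langle\Im G\rangle\prec N^{-1/3}$, Ward identities, and the single-resolvent local law; it is the heart of the proof of Proposition~\ref{prop:GFT} and cannot be replaced by the pointwise estimate in your write-up.
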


Armed with Proposition \ref{prop:GFT}, we can immediately prove Propositions \ref{prop:GFTeigenval} and \ref{prop:GFTeigenval2}.  In the remainder of this section we will use Proposition~\ref{prop:GFT} to identify the distribution of individual eigenvalues close to the edge of the spectrum. Translating \eqref{eq:gftneed} into a comparison of 
statistics of eigenvalues is a standard argument 
(see e.g. \cite[Section 15.2]{erdHos2017dynamical}), and so omitted. \nc

\begin{proof}[Proof of Proposition~\ref{prop:GFTeigenval}]
Consider the Ornstein-Uhlenbeck flow \eqref{eq:OU}
along which the first and second moments of $\widehat{H}_t^{(N)}$ are preserved. Furthermore, we have
\begin{equation}
\label{eq:oudistr}
\widehat{H}_t^{(N)}\stackrel{\dif}{=}e^{-t/2}\widehat{H}_0^{(N)}+\sqrt{1-e^{-t}}U,
\end{equation}
for a GUE/GOE matrix $U$ independent of $\widehat{H}_0^{(N)}$. Next, we consider another flow (note that there is no drift term, unlike in \eqref{eq:OU})
\begin{equation}
\label{eq:matDBM}
\dif H_t^{(N)}=\frac{\dif B_t^{(N)}}{\sqrt{N}}, \qquad\quad H_0^{(N)}=e^{-t_1/2}H^{(N)},
\end{equation}
where $t_1=N^{-1/3+\omega_1}$. It is then easy to see that
\begin{equation}
H_t^{(N)}\stackrel{\dif}{=}H_0^{(N)}+\sqrt{t}\widetilde{U},
\end{equation}
for a GUE/GOE matrix $\widetilde{U}$ independent of $H_0^{(N)}$.

The flows \eqref{eq:OU} and \eqref{eq:matDBM} induce a flow on the minors $H_t^{(N-k)}$, $\widehat{H}_t^{(N-k)}$ (see \eqref{eq:H_flow}--\eqref{eq:H_min_flow}). Using the shorthand notation $N_1:=N$, $N_2:=N-k$, we thus have
\begin{equation}
\label{eq:reldistr}
H_{ct_1}^{(N_l)}\stackrel{\dif}{=} \widehat{H}_{t_1}^{(N_l)}, \qquad\quad l=1,2,
\end{equation}
for $c=c(t_1):=(1-e^{-t_1})/t_1$.

We are now ready to conclude the proof. Denote the eigenvalues of $\widehat{H}_t^{(N_l)}$ by $\widehat{\lambda}_i^{(l)}(t)$, for $l=1,2$, and define the short--hand notations
\begin{equation}
\Lambda_i^{(l)}(t):=(\lambda_i^{(l)}(t)-2(N_l/N_1)^{1/2})N_l^{2/3}, \quad \widehat{\Lambda}_i^{(l)}(t):=(\widehat{\lambda}_i^{(l)}(t)-2(N_l/N_1)^{1/2})N_l^{2/3}, \quad l=1,2,
\end{equation}
with $\lambda_i^{(l)}(t)$ being the eigenvalues of $H_t^{(N_l)}$ 
\nc

Then, using \eqref{eq:gftneed} and proceeding as in the proof of \cite[Theorem 15.2]{erdHos2017dynamical}, we find
\begin{equation}
\begin{split}
\label{eq:gftindepneed}
\E\left[F\big(\Lambda_i^{(1)}(0)\big)G\big(\Lambda_j^{(2)}(0)\big)\right]=\E\left[F\big(\widehat{\Lambda}_i^{(1)}(t_1)\big)G\big(\widehat{\Lambda}_j^{(2)}(t_1)\big)\right]+\mathcal{O}\big(N^{-\delta}\big),
\end{split}
\end{equation}
for some small fixed $\delta>0$. Then, by \eqref{eq:reldistr}, we have
\begin{equation}
\begin{split}
\label{eq:almtheind}
\E\left[F\big(\widehat{\Lambda}_i^{(1)}(t_1)\big)G\big(\widehat{\Lambda}_j^{(2)}(t_1)\big)\right]=\E\left[F\big({\Lambda}_i^{(1)}(ct_1)\big)G\big({\Lambda}_j^{(2)}(ct_1)\big)\right]+\mathcal{O}\big(N^{-\delta}\big).
\end{split}
\end{equation}
Using that by assumption the independence \eqref{eq:step2} holds for the right-hand-side of \eqref{eq:almtheind}, and using \eqref{eq:reldistr} again, we thus conclude
\begin{equation}
\begin{split}
\E\left[F\big(\widehat{\Lambda}_i^{(1)}(t_1)\big)G\big(\widehat{\Lambda}_j^{(2)}(t_1)\big)\right]=\E\left[F\big(\widehat{\Lambda}_i^{(1)}(t_1)\big)\right]\E\left[G\big(\widehat{\Lambda}_j^{(2)}(t_1)\big)\right]+\mathcal{O}\big(N^{-\delta}\big).
\end{split}
\end{equation}
This, together with another similar application of \eqref{eq:gftindepneed} (this time for the product of expectations), concludes the proof.
\end{proof}

\begin{proof}[Proof of Proposition \ref{prop:GFTeigenval2}]
Given Proposition \ref{prop:GFT}, the proof of Proposition \ref{prop:GFTeigenval2} follows standard arguments; see, e.g., \cite[Section 15.2]{erdHos2017dynamical}. 
\end{proof}
It thus remains to give the proof of Proposition \ref{prop:GFT}. 
\begin{proof}[Proof of Proposition \ref{prop:GFT}]
	First, we split the time interval $[0, \infty]$ in two parts, $I_1 := [0,T]$ and $I_2 := [T, \infty]$, for $T := 100 \log N$. Then we distinguish four cases: For $t,s \in I_2$, the claim is immediate by trivial perturbation theory, i.e.~compare $s,t \ge T$ directly with GOE/GUE. In fact, in this regime by \eqref{eq:oudistr} it follows that the distribution $\widehat{H}_t$ is entry-wise close to a GUE/GOE matrix: $\lVert [e^{-t/2}\widehat{H}_0+\sqrt{1-e^{-t}}U]-U\rVert\lesssim N^{-10}$. This bound, together with Weyl's inequality gives the desired result. The two cases $t \in I_1$ and $s \in I_2$ (and vice versa) can be reduced to the fourth case $t,s \in I_1$ via the same trivial perturbation argument. 
	
	We will now discuss the fourth, and last, case. For simplicity, we will henceforth assume $f(x) = \widetilde{f}(x) = x$ and prove Proposition \ref{prop:GFT} without integration. The adaptions to the more general situation are routine (see, e.g., \cite[Proof of Theorem 17.4]{erdHos2017dynamical}) and so omitted. More precisely, for $E, \tilde{E} \in \R$ satisfying $|E - 2| \le N^{-2/3 + \varepsilon}$ and $|\widetilde{E} - 2\sqrt{1 - k/N}| \le N^{-2/3 + \varepsilon}$, we will show that, for $z = E + \ii \eta$ and $\tilde{z} = \widetilde{E} + \ii \eta$, it holds that
\begin{equation} \label{eq:goalderivative}
(N \eta)^2 \bigg| \frac{\dif}{\dif t}\E  \langle \Im G_t^{(N)}(z)\rangle \,  \langle \Im G_t^{(N-k)}(\tilde{z})\rangle \bigg| \lesssim N^{-1/6 + C \varepsilon} \,, \quad t \in [0,T] \,. 
\end{equation}

To do so, we will use that, for a smooth function $f$ and $H_t$ solving \eqref{eq:OU}, by Itô's formula, it holds that
	\begin{equation} \label{eq:Ito}
		\frac{\dif}{\dif t} \E f(H_t) = - \frac{1}{2} \sum_{a,b} \E h_{ab}(r) (\partial_{ab} f)(H_t) + \frac{1}{2} \sum_{a,b} \sum_{\alpha \in \{ab, ba\}}\kappa(ab,\alpha) \E (\partial_{ab}\partial_{\alpha} f)(H_t) \,, 
	\end{equation}
	and hence by a cumulant expansion (see, e.g., \cite[Prop.~3.2]{slow_corr})
	\begin{equation} \label{eq:cumexp}
		\frac{\dif}{\dif t} \E f(H_t) = \sum_{m=2}^{K-1} \sum_{a,b} \sum_{\bm \alpha \in \{ab,ba\}^m} \frac{\kappa(ab, \bm \alpha)}{m!} \E (\partial_{ab} \partial_{\bm \alpha} f)(H_t) + \Omega_K
	\end{equation}
	with some explicit error term $\Omega_K$. Here, for a $m$-tuple of double indices $\bm \alpha = (\alpha_1, ... , \alpha_m)$ we used the shorthand notation $\kappa(ab, (\alpha_1, ... , \alpha_m)) = \kappa(h_{ab}, h_{\alpha_1}, ... , h_{\alpha_m})$ for the joint cumulant of $h_{ab}, h_{\alpha_1}, ... , h_{\alpha_m}$ and set $\partial_{\bm \alpha} = \partial_{h_{\alpha_1}} ... \partial_{h_{\alpha_m}}$ and $\partial_{ab} = \partial_{h_{ab}}$. 
	
	In order to simplify the following presentation, we will henceforth assume that there is no difference for off-diagonal ($a \neq b$) and diagonal ($a = b$) cumulants $\kappa(ab, \bm \alpha)$ in \eqref{eq:cumexp}. The general case can be handled with straightforward minor modifications and is thus left to the reader. Moreover, since $t$ is fixed throughout the following argument, we will drop the subscript. For further ease of notation, we will write $G \equiv G_t^{(N)}(z)$ and $\widetilde{G} \equiv G_t^{(N-k)}(\tilde{z})$. 

By Itô's Lemma \eqref{eq:Ito} for $f(H) = \langle \Im G \rangle \langle \Im \widetilde{G} \rangle$, cumulant expansion \eqref{eq:cumexp} (truncated at at some large but finite order $R \in \N$), and using that $|\kappa(ab, (\alpha_1,..., \alpha_m))| \lesssim N^{-(m+1)/2}$ in \eqref{eq:cumex}, we find 
\begin{equation} \label{eq:cumex}
 \bigg| \frac{\dif}{\dif t}\E  \langle \Im G\rangle \,  \langle \Im \widetilde{G}\rangle \bigg| \lesssim \sum_{m=3}^R N^{-m/2} \ \sum_{l=0}^m \bigg| \sum_{a,b}  \E \partial_{ab}^l \partial_{ba}^{m-l} \langle \Im G\rangle \,  \langle \Im \widetilde{G}\rangle  \bigg| + \mathcal{O}(N^{-100}) \,. 
\end{equation}

Only the terms of order $m=3$ in \eqref{eq:cumex} have to be estimated carefully; the terms of order $m \ge 4$ will be treated starting from \eqref{eq:thirdex}. Now, for $m=3$, we consider one exemplary term, where $\partial_{ab}^l \partial_{ba}^{m-l}$ hit only one of the $\langle \Im G \rangle$ factors and leaves the other one intact, and another exemplary term, where $\partial_{ab}^l \partial_{ba}^{m-l}$ acts on both $\langle \Im G \rangle$ factors. As it is irrelevant for our estimates, we will henceforth ignore the difference between $G$ and $\widetilde{G}$, as well as their adjoints $G^*$ and $\widetilde{G}^*$, respectively, which arise by differentiation of $\Im G$. This is because, for the differentiated terms, we will neither exploit so-called \emph{Ward summations} for different matrices $G$ and $\widetilde{G}$, nor the smallness of $\Im G$ compared to $G$.  Moreover, the different normalization of the traces, $N^{-1}$ and $(N-k)^{-1}$, respectively, are irrelevant for our estimates. 

Now, as the first exemplary term with $m=3$ and $l = 1$, we estimate (an additional $N^{-1}$ factor stems from the normalized trace)
\begin{equation} \label{eq:firstex}
	\begin{split}
&N^{-5/2} \bigg| \E \sum_{j, a,b}  G_{ja} G_{bb} G_{aa} G_{bj} \langle \Im G \rangle \bigg|  \\
\lesssim  \, &N^{-5/2} \bigg| \sum_{j,a,b} \E G_{ja} G_{bj} \langle \Im G \rangle \bigg| + N^{-5/2} N^{-1/3+ 2\varepsilon} \E \sum_{j,a,b}  |G_{ja} G_{bj}| \langle \Im G \rangle \\
\lesssim \, & N^{-5/2} N^{-1/3+ 2\varepsilon} \big| \E \big( G^2\big)_{\bm 1 \bm 1} \big| +  N^{-5/2} N^{-2/3+ 4\varepsilon} \E \sum_{a,b} \sqrt{(GG^*)_{aa}} \sqrt{(GG^*)_{bb}} \\
\lesssim \, & N^{-1/2} N^{-1/3+ 5\varepsilon}/(N \eta)^2 + N^{-1/6+ 7 \varepsilon} /(N \eta)^2 \lesssim N^{-1/6 + 7 \varepsilon}/(N \eta)^2
	\end{split}
\end{equation}
where $\bm 1 = (1,...,1) \in \C^N$. 
In the first step, we used the usual isotropic local law for Wigner matrices $G_{bb} = m_{\rm sc} + \mathcal{O}_{\prec}(N^{-1/3+\varepsilon})$, where $m_{\rm sc} = m_{\rm sc}(z)$ denotes the Stieltjes transform of the semicircular density (see, e.g., \cite[Theorem~6.7]{erdHos2017dynamical}). To go to the third line, we employed \emph{isotropic resummation} in the form
$$
\sum_{ab} (G^2)_{ab} = (G^2)_{\bm 1 \bm 1} \,, 
$$
the averaged local law in the form $\langle \Im G \rangle = \mathcal{O}_{\prec}(N^{-1/3+ \varepsilon})$ (see, e.g., \cite[Theorem~6.7]{erdHos2017dynamical}), where we additionally used that $|\Im m_{\rm sc}| \lesssim N^{-1/3+\varepsilon}$ at the edge regime, and a Schwarz inequality. In the penultimate step, we again used a Schwarz inequality, a Ward identity $GG^* = \Im G/\eta$, an isotropic law $(\Im G)_{bb} = \mathcal{O}_{\prec}(N^{-1/3+\varepsilon})$, and $\eta = N^{-2/3 - \varepsilon}$. 

Next, as the second exemplary term with $m=3$ and $l =2$, we estimate (two additional $N^{-1}$ factors stem from two normalized traces)
\begin{equation} \label{eq:secondex}
\begin{split}
&N^{-7/2} \bigg| \E \sum_{i,j, a,b}  G_{ia} G_{bb}  G_{ai}  G_{jb} G_{aj} \bigg|  \\
\lesssim \, & N^{-7/2} \E \sum_{i,j,a,b} |G_{ia} G_{ai} G_{jb} G_{aj}| \lesssim  N^{-1/6 + 2 \varepsilon} /(N\eta)^2
\end{split}
\end{equation}
similarly to \eqref{eq:firstex}, my means of Schwarz inequalities (for the $i$ and $j$ summations; the $a$ and $b$ summations are estimated trivially by $N^2$), Ward identities and the single resolvent laws from above. 

Now we consider terms in \eqref{eq:cumex} of order $m \ge 4$. If only one $\langle \Im G \rangle$ factor is hit by a derivative (cf.~\eqref{eq:firstex}), we estimate
\begin{equation} \label{eq:thirdex}
	\begin{split}
&N^{-(m+2)/2} \E \sum_{j,a,b} |G_{ja}...G_{bj} \langle \Im G \rangle| 
\lesssim N^{-(m+2)/2} \E \sum_{j,a,b} |G_{ja}G_{bj}  \langle \Im G \rangle| \\
 \lesssim \, &N^{-(m-4)/2} N^{-1/3 + 5 \varepsilon}/(N \eta)^2 \le N^{-1/3 + 5 \varepsilon}/(N \eta)^2
	\end{split}
\end{equation}
again by single resolvent laws and a Schwarz inequality together with a Ward identity. If both $\langle \Im G \rangle$ factors are hit by a derivative (cf.~\eqref{eq:secondex}), we estimate, similarly to \eqref{eq:thirdex}, 
\begin{equation*}
\begin{split}
N^{-(m+4)/2} \E \sum_{i,j,a,b} |G_{ia}...G_{bi} G_{ja} ... G_{bj}| \lesssim \,  &N^{-(m+4)/2} \E \sum_{i,j,a,b} |G_{ia}G_{bi} G_{ja}  G_{bj}| \\
\lesssim  \, & N^{-(m-4)/2} N^{-2/3 + 2 \varepsilon}/(N \eta)^2 \lesssim N^{-2/3 + 2 \varepsilon}/(N \eta)^2 \,. 
\end{split}
\end{equation*}
We have thus shown \eqref{eq:goalderivative}, which concludes the proof of Proposition \ref{prop:GFT}. 
\end{proof}

\section{Subcritical regime: Proof of Theorem \ref{theo:small_k}}
\label{sec:subcrit}

In this section, we provide the proof of Theorem \ref{theo:small_k}. We begin with the eigenvalue fluctuations. 

\begin{proposition}[Subcritical regime: Normal fluctuation on scale $\sqrt{k}/N$] \label{EigenvalueSub}Fix a (small) constant $\varepsilon>0$. Let $H$ be a Wigner matrix satisfying Assumption \ref{ass:momass} and let $k\in\N$ be such that $ 1\ll  k\le N^{2/3-\varepsilon}$. Then for any fixed $i$, $\lambda_i^{(N)} - \lambda_i^{(N-k)}$ is asymptotically normal (see Footnote \ref{ftn:asymptnorm}) with normalization $N/\sqrt{k}$, i.e.
$$
  \frac{N(\lambda_i^{(N)} - \lambda_i^{(N-k)}) - k}{\sqrt{k}}  \sim \mathcal{N}(0,2/\beta)
$$
where $\beta=1$ (real case) and $\beta=2$ (complex case) and $\mathcal{N}(0,2/\beta)$ is a real centered Gaussian random variable of variance $2/\beta$. 
\nc\end{proposition}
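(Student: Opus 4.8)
The plan is to telescope $\lambda_i^{(N)}-\lambda_i^{(N-k)}=\sum_{n=N-k+1}^{N}(\lambda_i^{(n)}-\lambda_i^{(n-1)})$ into a sum of almost independent increments and to conclude via a martingale central limit theorem. Write $H^{(n)}$ in block form with respect to its lower-right $(n-1)\times(n-1)$ corner,
\[
H^{(n)}=\begin{pmatrix} a_n & \bm g_n^{*}\\ \bm g_n & H^{(n-1)}\end{pmatrix},
\]
so that the scalar $a_n$ and the vector $\bm g_n\in\C^{n-1}$ have independent entries of variance $1/N$ and are independent of $H^{(n-1)}$; set $\mathcal{F}_n:=\sigma(H^{(n)})$, an increasing filtration. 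By the Schur complement formula every eigenvalue $\mu$ of $H^{(n)}$ not shared with $H^{(n-1)}$ satisfies the secular equation $\mu=a_n+\sum_j|\langle \bm w_j^{(n-1)},\bm g_n\rangle|^2/(\mu-\lambda_j^{(n-1)})$, and isolating the near-singular $j=i$ term at $\mu=\lambda_i^{(n)}$ (using interlacing) gives the exact recursion
\[
\delta_n:=\lambda_i^{(n)}-\lambda_i^{(n-1)}=\frac{|\langle \bm w_i^{(n-1)},\bm g_n\rangle|^2}{D_n}\,,\qquad D_n:=\lambda_i^{(n)}-a_n+\sum_{j\ne i}\frac{|\langle \bm w_j^{(n-1)},\bm g_n\rangle|^2}{\lambda_j^{(n-1)}-\lambda_i^{(n)}}\,.
\]

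First I would run this recursion from $n=N$ down to $n=N-k$, keeping at every intermediate level the a priori inputs the expansion below requires: edge rigidity $|\lambda_j^{(n)}-\gamma_j^{(n)}|\prec N^{-2/3}$ for $j\sim1$, delocalization of $\bm w_i^{(n)}$, and, most importantly, the level repulsion bound $(\lambda_i^{(n)}-\lambda_{i+1}^{(n)})^{-1}\prec N^{2/3}$, which keeps the denominators $\lambda_j^{(n-1)}-\lambda_i^{(n)}$ with $j\ne i$ away from $0$. Once one conditions on the recursion having behaved well down to level $n$, the residual randomness is no longer a fresh Wigner matrix, so these bounds cannot be quoted directly; they must be propagated along the iteration, controlling the running suprema $\sup_{N-k\le n\le N}$ of the relevant auxiliary martingales via Doob's $L^2$ maximal inequality. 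I expect this to be the main obstacle. It is also the reason for restricting to a \emph{fixed} index $i$, since propagating level repulsion for index $i$ uses it for all indices $<i$.

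On the resulting very-high-probability event one expands $D_n$: by rigidity and the averaged/isotropic local law for $H^{(n-1)}$, the sum $\sum_{j\ne i}|\langle \bm w_j^{(n-1)},\bm g_n\rangle|^2/(\lambda_j^{(n-1)}-\lambda_i^{(n)})$ self-averages to $-1+\mathcal{O}_\prec(N^{-1/3}+k/N)$ (the nearby terms $j=i\pm1$ being $\prec N^{-1/3}$ thanks to level repulsion), while $\lambda_i^{(n)}=2+\mathcal{O}_\prec(N^{-1/3})$ and $a_n=\mathcal{O}_\prec(N^{-1/2})$; hence $D_n=1+\mathcal{O}_\prec(N^{-1/3})$ uniformly over $N-k<n\le N$. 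Set $\xi_n:=\sqrt{N}\langle \bm w_i^{(n-1)},\bm g_n\rangle$, which given $\mathcal{F}_{n-1}$ satisfies $\E[|\xi_n|^2\mid\mathcal{F}_{n-1}]=1$ \emph{exactly} and, by the classical CLT in the large parameter $n$ together with delocalization of $\bm w_i^{(n-1)}$, is asymptotically a standard real (resp.\ complex) Gaussian. Then $N\delta_n=|\xi_n|^2/D_n=|\xi_n|^2+\mathcal{O}_\prec(N^{-1/3}|\xi_n|^2)$, so that after telescoping
\[
\frac{N(\lambda_i^{(N)}-\lambda_i^{(N-k)})-k}{\sqrt{k}}=\frac{1}{\sqrt{k}}\sum_{n=N-k+1}^{N}\big(|\xi_n|^2-1\big)+\mathcal{E}_N,
\]
where $\mathcal{E}_N=\mathcal{O}_\prec(k^{3/2}/N+\sqrt{k}\,N^{-1/3})\to0$ precisely because $k\le N^{2/3-\varepsilon}$.

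It remains to apply a martingale CLT to $\{|\xi_n|^2-1\}_n$, which is an $L^p$-bounded martingale difference sequence for $\{\mathcal{F}_n\}$. A fourth-moment computation for the quadratic form $|\xi_n|^2$ in the entries of $\bm g_n$ — using delocalization of $\bm w_i^{(n-1)}$ and $\E\chi_{\mathrm{od}}^2=0$ in the complex case — yields $\E[(|\xi_n|^2-1)^2\mid\mathcal{F}_{n-1}]=2/\beta+\mathcal{O}_\prec(N^{-c})$, hence $\tfrac1k\sum_n\E[(|\xi_n|^2-1)^2\mid\mathcal{F}_{n-1}]\to2/\beta$; this is where $\beta$ enters, a squared standard real Gaussian having variance $2$ and a squared standard complex Gaussian variance $1$. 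Since $|\xi_n|^2$ has sub-exponential tails and $k\to\infty$, $\max_n|\xi_n|^2/\sqrt{k}\to0$ and the conditional Lindeberg condition holds, so the martingale CLT gives $k^{-1/2}\sum_n(|\xi_n|^2-1)\Rightarrow\mathcal{N}(0,2/\beta)$; together with $\mathcal{E}_N\to0$ and Slutsky's lemma this proves the claim.
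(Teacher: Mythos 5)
Your proposal follows essentially the same route as the paper's proof: the Schur complement secular equation for one row/column removal, isolating the nearly singular $j=i$ term to get $N\delta_n=|\xi_n|^2(1+\mathcal{O}_\prec(N^{-1/3}))$, propagation of the level repulsion event along the iteration via Doob's maximal inequality for the relevant martingale, and finally a martingale CLT (with the same conditional-variance computation giving $2/\beta$). The only cosmetic difference is that you solve the secular equation explicitly for the increment $\delta_n$ rather than leaving it implicit; the content is identical.
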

The proof of Proposition \ref{EigenvalueSub} is given in Section \ref{subsec:evalsub}. The following Proposition \ref{EigenvectorSub} concerns the eigenvector overlaps. Its proof is given in Section \ref{subsec:evecsub}. 
\begin{proposition}[Subcritical regime: Eigenvector overlap] \label{EigenvectorSub} Fix a (small) constant $\varepsilon>0$. Let $H$ be a Wigner matrix satisfying Assumption \ref{ass:momass} and let $k\in\N$ be such that $  k\le N^{2/3-\varepsilon}$. 
Let $\bm{w}_i^{(N-k)}$ denote the $i$-th normalized eigenvector of its $(N-k)$-minor.
Then for any fixed $i$, any small (but fixed) constant $\delta>0$, there exists a positive constant $a>0$, such that, for $N$ large enough,
\begin{align*}
\mathbf{P}\Big(0\leq 1-|\langle \bm{w}_i^{(N)}, {\bm{w}}_i^{(N-k)}\rangle|\leq k N^{-\frac23+\delta}\Big)\geq 1-N^{-a}\,. 
\end{align*}
\end{proposition}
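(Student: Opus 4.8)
The plan is to propagate the alignment one minor at a time, from $H^{(N)}$ down to $H^{(N-k)}$, and to control the error accumulated over the $k$ steps by a martingale argument; we argue by induction on the gap $k$, the case $k=1$ being the basic one-step estimate. View $H^{(n)}$ as the bordering of $H^{(n-1)}$ by the new column $\bm b^{(n)} := (h_{l,N-n+1})_{l=N-n+2}^{N}\in\C^{n-1}$ and diagonal entry $h_{N-n+1,N-n+1}$. The eigenvalue equation together with Schur complement give the standard formulas: $\lambda_i^{(n)}$ is the root, in the interlacing interval $(\lambda_i^{(n-1)},\lambda_{i-1}^{(n-1)})$, of the secular equation $\lambda-h_{N-n+1,N-n+1}=\sum_j|\langle\bm w_j^{(n-1)},\bm b^{(n)}\rangle|^2(\lambda-\lambda_j^{(n-1)})^{-1}$, and $\bm w_i^{(n)}=(x_n,\bm v_n)$ with $\bm v_n=x_n\sum_j\langle\bm w_j^{(n-1)},\bm b^{(n)}\rangle(\lambda_i^{(n)}-\lambda_j^{(n-1)})^{-1}\bm w_j^{(n-1)}$ and $|x_n|^{-2}=1+\sum_j|\langle\bm w_j^{(n-1)},\bm b^{(n)}\rangle|^2(\lambda_i^{(n)}-\lambda_j^{(n-1)})^{-2}$. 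Using isotropic delocalization of the $\bm w_j^{(n-1)}$ — so that $|\langle\bm w_j^{(n-1)},\bm b^{(n)}\rangle|^2\sim N^{-1}$ for the relevant $j$, together with a matching anti-concentration lower bound — edge rigidity $|\lambda_j^{(n-1)}-\gamma_j^{(n-1)}|\prec N^{-2/3}$, and the edge level repulsion $\lambda_{i-1}^{(n-1)}-\lambda_i^{(n-1)}\gtrsim N^{-2/3-\delta}$, one obtains $\lambda_i^{(n)}-\lambda_i^{(n-1)}\sim N^{-1}$ and $\sum_{j\neq i}|\langle\bm w_j^{(n-1)},\bm b^{(n)}\rangle|^2(\lambda_i^{(n)}-\lambda_j^{(n-1)})^{-2}\prec N^{1/3}$, whence the one-step bound $1-|\langle\bm w_i^{(n)},\bm w_i^{(n-1)}\rangle|^2\prec N^{-2/3}$ (and hence $1-|\langle\bm w_i^{(n)},\bm w_i^{(n-1)}\rangle|\prec N^{-2/3}$). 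The level-repulsion input must be kept valid along the entire iteration $n=N,\dots,N-k$; exactly as in the proof of Proposition \ref{EigenvalueSub}, this requires a separate induction based on a maximal inequality for martingales, and it is the reason we must keep the index $i$ fixed.

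For the composition, normalize phases so that $\beta_m:=\langle\bm w_i^{(m)},\bm w_i^{(m-1)}\rangle\in(0,1)$, and set $\tilde\chi_m:=\langle\bm w_i^{(N-k)},\bm w_i^{(m)}\rangle$ and $\tau_j^{(m-1)}:=\langle\bm w_i^{(N-k)},\bm w_j^{(m-1)}\rangle$ (so $\tau_i^{(m-1)}=\tilde\chi_{m-1}$). Projecting the formula for $\bm w_i^{(m)}$ onto $\bm w_i^{(N-k)}$, and using that coordinate $N-m+1$ of $\bm w_i^{(N-k)}$ vanishes for $m>N-k$, gives the affine recursion $\tilde\chi_m=\beta_m\tilde\chi_{m-1}+E_m$, $\tilde\chi_{N-k}=1$, with $E_m:=x_m\sum_{j\neq i}\langle\bm w_j^{(m-1)},\bm b^{(m)}\rangle(\lambda_i^{(m)}-\lambda_j^{(m-1)})^{-1}\tau_j^{(m-1)}$; iterating, $\tilde\chi_N=\prod_{m=N-k+1}^N\beta_m+\sum_{m=N-k+1}^N\big(\prod_{m'>m}\beta_{m'}\big)E_m$. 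By the one-step bound the product is $1-\mathcal{O}_\prec(kN^{-2/3})$, so (the nonnegativity $1-|\tilde\chi_N|\ge 0$ being Cauchy--Schwarz) it remains to show $|\sum_m(\cdots)E_m|\prec kN^{-2/3}$. The point is that $\tau_j^{(m-1)}$ and $\lambda_j^{(m-1)}$ are $\mathcal F_{m-1}:=\sigma(H^{(m-1)})$-measurable while the fresh column $\bm b^{(m)}$ is independent of $\mathcal F_{m-1}$; hence $\E[E_m\,|\,\mathcal F_{m-1}]$ is negligible (it vanishes at leading order, the next-order term — coming from the $\bm b^{(m)}$-dependence of $x_m$ and $\lambda_i^{(m)}$ — being summable by a standard cumulant expansion), and $\{E_m-\E[E_m\,|\,\mathcal F_{m-1}]\}_m$ is a martingale difference sequence. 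A Burkholder-type inequality bounds the martingale part by $N^{\delta}\big(\sum_m\E[|E_m|^2\,|\,\mathcal F_{m-1}]\big)^{1/2}$; and combining $|\langle\bm w_j^{(m-1)},\bm b^{(m)}\rangle|^2\sim N^{-1}$, $\min_{j\neq i}|\lambda_i^{(m)}-\lambda_j^{(m-1)}|\gtrsim N^{-2/3-\delta}$, and $\sum_{j\neq i}|\tau_j^{(m-1)}|^2=1-|\tilde\chi_{m-1}|^2\prec(m-1-(N-k))N^{-2/3}$ — which is the induction hypothesis applied to the smaller gap $(m-1)-(N-k)<k$ — yields $\E[|E_m|^2\,|\,\mathcal F_{m-1}]\prec(m-1-(N-k))N^{-4/3}$, so $\sum_m\E[|E_m|^2\,|\,\mathcal F_{m-1}]\prec k^2N^{-4/3}$ and the martingale part is $\prec kN^{-2/3}$. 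This closes the induction and gives $1-|\langle\bm w_i^{(N)},\bm w_i^{(N-k)}\rangle|\prec kN^{-2/3}$, which is the assertion (after converting $\prec$ into the stated $N^{\delta}$-deterministic bound holding with probability $\ge 1-N^{-a}$).

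The main obstacle is the composition step: the naive estimate $|\sum_m E_m|\le\sum_m|E_m|$ with $|E_m|\prec\sqrt{m-1-(N-k)}\,N^{-2/3}$ (Cauchy--Schwarz, no cancellation) only produces $k^{3/2}N^{-2/3}$, which is already too weak for $k\sim N^{1/3}$. It is therefore essential to exploit the independence of the successive new columns $\bm b^{(m)}$ and the (almost-)martingale structure of $E_m$ to gain the square-root cancellation; quantifying the conditional variance in turn forces both the induction on $k$ and the propagation of the edge level repulsion along the minor iteration, which — as in the eigenvalue part — is the most delicate auxiliary input.
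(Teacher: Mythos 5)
Your proposal is not just different from the paper's proof, it appears to correctly identify and repair a gap in it. The paper's argument is: choose phases so that $\mathcal{O}_i^{(n,n-1)}:=\langle \bm w_i^{(n)},\widetilde{\bm w}_i^{(n-1)}\rangle\in(0,1]$, prove the one-step estimate $1-\mathcal{O}_i^{(n,n-1)}\prec N^{-2/3+C\delta}$ (which agrees with your one-step bound), and then decompose $\bm w_i^{(n)}=\mathcal{O}_i^{(n,n-1)}\widetilde{\bm w}_i^{(n-1)}+\bm r_n$ with $\bm r_n\perp\widetilde{\bm w}_i^{(n-1)}$ and \emph{assert} $\|\bm r_n\|\prec N^{-2/3+C\delta}$, so that iterating $k$ times gives a cumulative error vector of norm $\prec kN^{-2/3+C\delta}$ and hence $1-\mathcal{O}_i^{(N,N-k)}\prec kN^{-2/3+C\delta}$. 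But $\|\bm r_n\|^2=1-(\mathcal{O}_i^{(n,n-1)})^2\prec N^{-2/3+C\delta}$, so in fact $\|\bm r_n\|\prec N^{-1/3+C\delta/2}$, not $N^{-2/3+C\delta}$: the paper's asserted error-vector bound seems to be off by a square root. With the corrected bound, the paper's direct summation only gives $\prec kN^{-1/3+C\delta}$; even after exploiting the orthogonality $\bm r_n\perp\widetilde{\bm w}_i^{(n-1)}$ and the induction hypothesis, the deterministic estimate $|\langle\bm r_n,\widetilde{\bm w}_i^{(N-k)}\rangle|\lesssim\|\bm r_n\|\cdot\sqrt{1-(\mathcal{O}_i^{(n-1,N-k)})^2}\prec\sqrt{n-1-(N-k)}\,N^{-2/3+C\delta}$ sums only to $k^{3/2}N^{-2/3+C\delta}$, which for $k$ close to $N^{2/3-\varepsilon}$ fails to be small. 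This is precisely the ``main obstacle'' you diagnose.

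Your repair --- rewriting the recursion as $\tilde\chi_m=\beta_m\tilde\chi_{m-1}+E_m$ with $E_m=\langle\widetilde{\bm r}_m,\widetilde{\bm w}_i^{(N-k)}\rangle$, noting that $E_m$ depends on the fresh column $\bm b^{(m)}$ which is independent of $\mathcal F_{m-1}$, centering it conditionally, and applying a Burkholder-type bound --- does produce the linear-in-$k$ error. The conditional second-moment estimate you use, $\E[|E_m|^2\,|\,\mathcal F_{m-1}]\prec(m-1-(N-k))N^{-4/3+C\delta}$, follows from $|x_m|^2\prec N^{-1}$, $\E|\langle\bm w_j^{(m-1)},\bm b^{(m)}\rangle|^2\sim N^{-1}$, level repulsion $\min_{j\neq i}|\lambda_i^{(m)}-\lambda_j^{(m-1)}|\gtrsim N^{-2/3-\delta}$, and the induction hypothesis $\sum_{j\neq i}|\tau_j^{(m-1)}|^2=1-|\tilde\chi_{m-1}|^2\prec (m-1-(N-k))N^{-2/3+C\delta}$; Burkholder then gives the $\prec kN^{-2/3+C\delta}$ martingale bound you claim.

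The remaining gaps in your proposal are the ones you yourself flag: (i) the bias $\E[E_m|\mathcal F_{m-1}]$ is not quite zero because $x_m$ and $\lambda_i^{(m)}$ depend on $\bm b^{(m)}$, and one needs to actually carry out the integration-by-parts/cumulant argument to show it is $\prec N^{-2/3+C\delta}$ per step; (ii) the conditional-variance bounds are stochastic-domination bounds (holding on the level-repulsion event with very high probability), not deterministic, so the Burkholder step must be run on the good event with the usual care; and (iii) the propagation of level repulsion along the minor iteration, which is the auxiliary martingale argument from Proposition \ref{EigenvalueSub}, is invoked but not reproduced. These are technical, not conceptual. In short, your approach is a genuine refinement: where the paper reduces the composition to a direct triangle-inequality sum (which, as written, does not seem to reach the stated bound), your argument exploits the conditional independence of the new columns to gain the square-root cancellation, and that cancellation appears to be essential to obtain $kN^{-2/3+\delta}$ across the full range $k\le N^{2/3-\varepsilon}$.
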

Propositions \ref{EigenvalueSub} and \ref{EigenvectorSub} for $i=1$ immediately prove parts (a) and (b) of Theorem \ref{theo:small_k}, respectively. \qed

\subsection{Proof of Proposition \ref{EigenvalueSub}} \label{subsec:evalsub}
The proof of Proposition \ref{EigenvalueSub} is split in several parts. 

\subsubsection{Preliminaries}
For any $n\geq 2$, we write 
\begin{align*}
	H^{(n)}=\left( \begin{array}{cc}
		h^{(n)}_{11}  & (\bm{a}^{(n)})^*\\ \\
		\bm{a}^{(n)}   & H^{(n-1)}
	\end{array}\right)
\end{align*}
Note that here we rewrite the $N-n+1$-th diagonal entry of $H$ as $h_{11}^{(n)}$.  Further, we denote the 
normalized  eigenvector associated with $\lambda_i^{(n)}$ by $\bm{w}^{(n)}_i$ and we write
\begin{align}
	\bm{w}^{(n)}_i=\left(
	\begin{array}{c}
		u^{(n)}_i \\   \\
		\bm{v}^{(n)}_i
	\end{array}
	\right), \label{19071010}
\end{align}
where $u^{(n)}_i$ is the first component of $\bm{w}^{(n)}_i$.  From the eigenvalue equation $H^{(n)} \bm{w}^{(n)}_i= \lambda^{(n)}_i \bm{w}^{(n)}_i$, 
we get 
\begin{equation} \label{19070901}
	\begin{split}
	&h^{(n)}_{11} u^{(n)}_i+ (\bm{a}^{(n)})^* \bm{v}^{(n)}_i= \lambda^{(n)}_i u^{(n)}_i,\\
& u^{(n)}_i \bm{a}^{(n)}+ H^{(n-1)} \bm{v}^{(n)}_i= \lambda^{(n)}_i \bm{v}^{(n)}_i.  
	\end{split}
\end{equation}
From the second equation in \eqref{19070901}, we have 
\begin{align}
	\bm{v}^{(n)}_i=  u^{(n)}_i \big(\lambda^{(n)}_i-H^{(n-1)} \big)^{-1} \bm{a}^{(n)}.  \label{19070902}
\end{align}
Plugging \eqref{19070902} into the first equation in \eqref{19070901}, we also have\footnote{We point out that $\lambda^{(n)}_i$ might be an eigenvalue of $H^{(n-1)}$, and hence $\lambda^{(n)}_i-H^{(n-1)}$ is not invertible and $u_i^{(n)} = 0$. For continuously distributed matrix entries $h_{ij}$ this, however, happens with probability zero. In case of discretely distributed $h_{ij}$ we can simply smooth out the distribution on an exponentially (in $N$) small scale and remove this change in distribution by trivial perturbation theory. We thus ignore this minor technicality in the following.}
\begin{align*}
	h^{(n)}_{11} +  (\bm{a}^{(n)})^*\big(\lambda^{(n)}_i-H^{(n-1)} \big)^{-1} \bm{a}^{(n)}= \lambda^{(n)}_i
\end{align*}
which can be rewritten as 
\begin{align}
	\lambda^{(n)}_i=  h^{(n)}_{11} +  \frac{1}{N}\sum_{\alpha=1}^{n-1} \frac{|\xi_\alpha^{(n)}|^2}{\lambda_i^{(n)}-\lambda_\alpha^{(n-1)}}, \label{19071002}
\end{align}
where 
\begin{align}
	\xi_\alpha^{(n)}:= \sqrt{N}(\bm{w}^{(n-1)}_\alpha)^*\bm{a}^{(n)}.  \label{def of xi}
\end{align}
Using the delocalization of eigenvectors, $\Vert \bm w_{\alpha}^{(n-1)} \Vert_\infty \prec N^{-1/2}$ and using independence of the matrix entries, we easily see that $|\xi_{\alpha}^{(n)}|^2$ is asymptotically
$\chi^2$-distributed, more precisely it is
$\chi^2(2)/2$ (complex case) or $\chi^2(1)$ (real case). We will, however, not need this precise information. We  also have $|\xi_{\alpha}^{(n)}|\prec 1$
using the moment bounds \eqref{eq:momass} on the matrix elements. 
As we are considering the subcritical regime, we will work with $n\in [N-k,N]$ only in this section.

We need the following results on rigidity and level repulsion of the eigenvalues.  We refer to \cite[Theorem~7.6]{loc_sc_gen} for rigidity (which, in a weaker form, has already been used in \eqref{eq:rigid}), and \cite[Proposition~B.17]{benigni2022optimal} for the level repulsion, which can also be obtained by combining \cite[Theorem~2.7]{bourgade2014edge} with  \cite[Remark~1.5]{knowles2013eigenvector}. 
\begin{lemma}[Rigidity and level repulsion] \label{lem.011301} For any  $n\in [N-k,N]$, we have the following statements.
	\noindent(i) \textnormal{[Rigidity]} For any $i\in [n]$,
	\begin{align*}
		|\lambda_i^{(n)}-\gamma_i^{(n)}| \prec \min \{i, n-i+1\}^{-1/3} n^{-2/3},
	\end{align*} 
	where $\gamma_i^{(n)}$ is defined in \eqref{eq:quantiles}. 
	
	\noindent(ii) \textnormal{[Level repulsion]} For any fixed $i$, and for any sufficiently small $\delta>0$, there exists a small $a>0$, such that  
	\begin{align*}
		\mathbf{P}\big(|\lambda_i^{(n)}-\lambda_{i+1}^{(n)}|< N^{-2/3-\delta}\big)\leq N^{-a}. 
	\end{align*}
\end{lemma}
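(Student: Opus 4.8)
The plan is to reduce both statements to the corresponding known facts for a \emph{standard} Wigner matrix and then transfer them back by a trivial deterministic rescaling. For $n\in[N-k,N]$ I would set $\widehat H^{(n)}:=\sqrt{N/n}\,H^{(n)}$; since the entries of $H^{(n)}$ are distributed as $\chi_\dif/\sqrt N$ and $\chi_{\mathrm{od}}/\sqrt N$, those of $\widehat H^{(n)}$ are distributed as $\chi_\dif/\sqrt n$ and $\chi_{\mathrm{od}}/\sqrt n$, so $\widehat H^{(n)}$ is a standard Wigner matrix of size $n$ satisfying Assumption~\ref{ass:momass} verbatim. Its eigenvalues are $\widehat\lambda_i^{(n)}=\sqrt{N/n}\,\lambda_i^{(n)}$, and a direct inspection of \eqref{eq:quantiles} together with the definition of $\rho^{(N-k)}$ shows that the classical eigenvalue locations of $\widehat H^{(n)}$ with respect to the semicircle law $\rho_{sc}$ are precisely $\widehat\gamma_i^{(n)}=\sqrt{N/n}\,\gamma_i^{(n)}$, the rescaling of the density being exactly compensated by the rescaling of its argument. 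Note that $\sqrt{N/n}=(1-k/N)^{-1/2}=1+\mathcal O(N^{-1/3-\varepsilon})$, so all rescaling factors appearing below are $1+o(1)$ and harmless.

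For part~(i) I would invoke the bulk and edge rigidity estimate \cite[Theorem~7.6]{loc_sc_gen} applied to $\widehat H^{(n)}$, which gives, uniformly in $i\in[n]$, the bound $|\widehat\lambda_i^{(n)}-\widehat\gamma_i^{(n)}|\prec \min\{i,n-i+1\}^{-1/3}n^{-2/3}$. Dividing through by the bounded factor $\sqrt{N/n}$ yields the claim of~(i) for a fixed $n$; uniformity over the at most $k+1\le N^{2/3}$ values of $n$ (and over $i$) then follows from a union bound, using that stochastic domination permits the exceptional probability of each individual pair $(n,i)$ to be made smaller than $N^{-C}$ for arbitrarily large $C$.

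For part~(ii) I would invoke the edge level-repulsion estimate for $\widehat H^{(n)}$: for each fixed $i$ and each sufficiently small $\delta>0$ there is a small $a>0$ with $\mathbf P\big(|\widehat\lambda_i^{(n)}-\widehat\lambda_{i+1}^{(n)}|<n^{-2/3-\delta}\big)\le n^{-a}$. This is exactly \cite[Proposition~B.17]{benigni2022optimal}; alternatively it can be assembled from the gap and edge-universality estimates of \cite[Theorem~2.7]{bourgade2014edge} together with the eigenvector delocalization input of \cite[Remark~1.5]{knowles2013eigenvector}. Undoing the rescaling changes the gap and the threshold only by the bounded factor $\sqrt{N/n}$ and replaces $n$ by $N$ in the exponent, both harmless, giving the stated bound for the fixed $n$ at hand. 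I would deliberately \emph{not} attempt a union bound over $n$ here, since $a$ is small; the uniform-in-$n$ version that is actually needed in the iteration carried out below is obtained separately via a martingale maximal inequality rather than from this lemma.

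I do not expect a genuine obstacle in this lemma; the only points demanding a little attention are (a) checking that the rescaled quantiles $\widehat\gamma_i^{(n)}$ are indeed the semicircle quantiles, so that \cite[Theorem~7.6]{loc_sc_gen} applies verbatim, and (b) confirming that the level-repulsion reference is formulated (or can be re-derived) for the general Wigner class under Assumption~\ref{ass:momass} rather than only for Gaussian ensembles --- both cited results are in the required generality. Everything else is bookkeeping of the deterministic factor $\sqrt{N/n}$ and, in~(i), a union bound over $n$.
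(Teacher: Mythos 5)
Your proposal is correct and takes essentially the same route as the paper, which simply cites \cite[Theorem~7.6]{loc_sc_gen} for rigidity and \cite[Proposition~B.17]{benigni2022optimal} (equivalently \cite[Theorem~2.7]{bourgade2014edge} together with \cite[Remark~1.5]{knowles2013eigenvector}) for level repulsion. The rescaling $\widehat H^{(n)}=\sqrt{N/n}\,H^{(n)}$ that you spell out, and the check that it maps the quantiles of $\rho^{(n)}$ to the semicircle quantiles, is exactly the implicit bridge the paper uses to make those references applicable to the minors; your remark that a union bound over $n$ is deliberately avoided in part~(ii) (and instead handled later via the martingale maximal inequality) is also consistent with the paper's structure.
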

Finally, we denote 
the event of level repulsion at the $i$-th gap of $H^{(n)}$ as
\begin{align}
	\mathcal{E}_i^{(n)}(\delta)=\{|\lambda_i^{(n)}-\lambda_{i+1}^{(n)}|\geq N^{-2/3-\delta}\}. \label{011310}
\end{align}
We will choose $\delta\equiv \delta(\varepsilon)$ to be sufficiently small according to $\varepsilon$ in the upper bound $k\leq N^{2/3-\varepsilon}$. 

We first consider the case $i=1$, and later extend the discussion to generally fixed  $i$ (see Section \ref{subsubsec:extension}).

\subsubsection{One step eigenvalue difference} From \eqref{19071002},  $|h_{11}^{(n)}|\prec N^{-1/2}$  and  rigidity for $\lambda_1^{(n)}$
from  part (i) of Lemma \ref{lem.011301}, we can  get
\begin{align}
	2\sqrt{\frac{n}{N}}=   \frac{1}{N}\frac{|\xi_1^{(n)}|^2}{\lambda_1^{(n)}-\lambda_1^{(n-1)}}+\frac{1}{N}\sum_{\alpha\geq 2}^{n-1} \frac{|\xi_\alpha^{(n)}|^2}{\lambda_1^{(n)}-\lambda_\alpha^{(n-1)}}+O_\prec(N^{-\frac12}). \label{19071006}
\end{align} 
First, by level repulsion and rigidity for $\lambda_1^{(n)}$  from Lemma \ref{lem.011301}, together with the fact $|\xi|\prec 1$, we have 
\begin{align}
	\frac{1}{N}\sum_{\alpha\geq 2}^{n-1} \frac{|\xi_\alpha^{(n)}|^2}{\lambda_1^{(n)}-\lambda_\alpha^{(n-1)}}+\frac{1}{N}\sum_{\alpha=1}^{n-1} \frac{|\xi_\alpha^{(n)}|^2}{\lambda_\alpha^{(n-1)}-2\sqrt{\frac{n}{N}}-\ii N^{-\frac23+\delta}}= O_\prec(N^{-\frac13+C\delta}), \quad \text{on} \quad  \mathcal{E}_1^{(n)}(\delta).  \label{19071003}
\end{align}
By the local law for Green function entries (see, e.g., \cite[Theorem 6.7]{erdHos2017dynamical}) and the fact $n\in [N-k,N]$, we have 
\begin{align*}
	G^{(n)}_{ii}(2\sqrt{\frac{n}{N}}+\ii N^{-\frac23+\delta})=\sqrt{\frac{N}{n}} m_{\mathrm{sc}}\Big(2+\ii \sqrt{\frac{N}{n}}N^{-\frac23+\delta}\Big)+O_\prec(N^{-\frac13+C\delta})=-1+O_\prec(N^{-\frac13+C\delta}),
\end{align*}
where $G^{(n)}$ is the Green function (resolvent) of $H^{(n)}$. Consequently, by Schur complement, 
we also have 
\begin{align}
	\frac{1}{N}\sum_{\alpha=1}^{n-1} \frac{|\xi_\alpha^{(n)}|^2}{\lambda_\alpha^{(n-1)}-2\sqrt{\frac{n}{N}}-\ii N^{-\frac23+\delta}} &=-\Big(G_{11}^{(n)}\Big(2\sqrt{\frac{n}{N}}+\ii N^{-\frac23+\delta}\Big)\Big)^{-1}+h_{11}^{(n)}-2\sqrt{\frac{n}{N}}-\ii N^{-\frac23+\delta} \notag\\
	&= -1+O_\prec(N^{-\frac13+C\delta}).  \label{19071004}
\end{align}
Combining \eqref{19071003} with \eqref{19071004},  we obtain 
\begin{align}
	\frac{1}{N}\sum_{\alpha\geq 2}^{n-1} \frac{|\xi_\alpha^{(n)}|^2}{\lambda_1^{(n)}-\lambda_\alpha^{(n-1)}}= 1+O_\prec(N^{-\frac13+C\delta}), \quad \text{on} \quad  \mathcal{E}_1^{(n)}(\delta). \label{19071005}
\end{align}
Plugging \eqref{19071005} into \eqref{19071006} we get 
\begin{align}
	N(\lambda_1^{(n)}-\lambda_1^{(n-1)})=|\xi_1^{(n)}|^2+ O_\prec(N^{-\frac13+C\delta}), \quad \text{on} \quad  \mathcal{E}_1^{(n)}(\delta). \label{19071007}
\end{align}
This shows that the leading order fluctuation of $\lambda_1^{(n)}-\lambda_1^{(n-1)}$ comes from the asymptotically 
$\chi^2$ distributed random  variable $|\xi_1^{(n)}|^2$. 

\subsubsection{Iterating \eqref{19071007}: Maintaining the level repulsion event}The above analysis works for one fixed $n\in [N-k,N]$, on  the event $\mathcal{E}_1^{(n)}(\delta)$, which holds with probability at least $1-N^{-a}$ according to Lemma~\ref{lem.011301}. In order to get an estimate of $\lambda_1^{(N)}-\lambda_1^{(N-k)}$ which can be written as the telescopic
sum of $k$ one step differences, we will need the validity of the event $\mathcal{E}_1^{(n)}(\delta)$
simultaneously  for all $n\in [N-k, N]$. A naive union estimate of the probability bound
on $\mathcal{E}_1^{(n)}(\delta)$  in Lemma \ref{lem.011301} (ii) is not enough.  Our strategy is to show that the level repulsion event, $\mathcal{E}_1^{(n)}(\delta)$, once is
assumed on level $n=N$, can be maintained all the way down to the level $n=N-k$, with slight modifications on $\delta$, if necessary. We start with $n=N$. 

To avoid confusion we restate  \eqref{19071007} in this case as  follows
\begin{align}
	N(\lambda_1^{(N)}-\lambda_1^{(N-1)})=|\xi_1^{(N)}|^2+O_\prec(N^{-1/3+C\delta}), \quad \text{on} \quad  \mathcal{E}^{(N)}_1(\delta). \label{010602}
\end{align}
Then, we apply \eqref{19071002} with $i=2$ and $n=N$. By Lemma \ref{lem.011301} (i), we can first write 
\begin{align}
	2=\frac{1}{N}\frac{|\xi_2^{(N)}|^2}{\lambda_2^{(N)}-\lambda_2^{(N-1)}}+\frac{1}{N}\frac{|\xi_1^{(N)}|^2}{\lambda_2^{(N)}-\lambda_1^{(N-1)}}+ \frac{1}{N}\sum_{\alpha=3}^{N-1} \frac{|\xi_\alpha^{(N)}|^2}{\lambda_2^{(N)}-\lambda_\alpha^{(N-1)}} +O_\prec(N^{-1/2}). \label{010601}
\end{align}
We can first bound the second term in the right-hand-side~by \eqref{010602} and the definition of $\mathcal{E}^{(N)}_1(\delta)$, and further derive
\begin{align*}
	2 &=\frac{1}{N}\frac{|\xi_2^{(N)}|^2}{\lambda_2^{(N)}-\lambda_2^{(N-1)}}+ \frac{1}{N}\sum_{\alpha=3}^{N-1} \frac{|\xi_\alpha^{(N)}|^2}{\lambda_2^{(N)}-\lambda_\alpha^{(N-1)}} +O_\prec(N^{-1/3+C\delta})\notag\\
	&\geq \frac{1}{N}\frac{|\xi_2^{(N)}|^2}{\lambda_2^{(N)}-\lambda_2^{(N-1)}}+ \frac{1}{N}\sum_{\alpha=N^{\varepsilon}}^{N-1} \frac{|\xi_\alpha^{(N)}|^2}{\lambda_2^{(N)}-\lambda_\alpha^{(N-1)}} +O_\prec(N^{-1/3+C\delta})\notag\\
	&\geq  \frac{1}{N}\frac{|\xi_2^{(N)}|^2}{\lambda_2^{(N)}-\lambda_2^{(N-1)}}+1 +O_\prec(N^{-1/3+C\delta}),\quad  \text{on} \quad  \mathcal{E}_1^{(N)}(\delta).
\end{align*}
where in the last step we used local law, similarly to the derivation in \eqref{19071003}--\eqref{19071005}.  But this time we do not need a level repulsion between $\lambda_2^{(N)}$ and $\lambda_3^{(N)}$. 
Consequently, we obtain
\begin{align}
	N(\lambda_2^{(N)}-\lambda_2^{(N-1)})\geq |\xi_2^{(N)}|^2 \big(1+O_\prec(N^{-1/3+C\delta})\big),\quad \text{on} \quad  \mathcal{E}_1^{(N)}(\delta), \label{010605}
\end{align}
Combining \eqref{010602} with \eqref{010605}, we obtain
\begin{align}
	\lambda_1^{(N-1)}-\lambda_2^{(N-1)}\geq \lambda_1^{(N)}-\lambda_2^{(N)}+\frac{1}{N}\Big(|\xi_2^{(N)}|^2-|\xi_1^{(N)}|^2\Big)+O_\prec(N^{-4/3+C\delta})\notag\\
	\geq N^{-2/3-\delta}+\frac{1}{N}\Big(|\xi_2^{(N)}|^2-|\xi_1^{(N)}|^2\Big)+O_\prec(N^{-4/3+C\delta}) ,\quad \text{on} \quad  \mathcal{E}_1^{(N)}(\delta) \label{012021}
\end{align}
where in the last estimate we used the definition in \eqref{011310}. 

The rest of the proof is divided in three steps. 
\subsubsection{Step 1: Modified level repulsion events}
The goal of this section, is to show that, for sufficiently small $\delta \equiv \delta(\epsilon) >0$, there exists $a > 0$ such that 
\begin{align}
	\mathbf{P}\big( \mathcal{E}_1(\delta)\big)\geq 1-N^{-a},\quad \text{where} \quad \mathcal{E}_1(\delta):=\bigcap_{j=0}^k \mathcal{E}_{1}^{(N-j)}(\delta). \label{011615}
\end{align}

 In order to do so and obtain level repulsion for the $(N-\ell)$-th minors,
we introduce a sequence of modified level repulsion events 
\begin{align}
	\widehat{\mathcal{E}}_1^{(N-\ell)}(\delta):=\left\{|\lambda_1^{(N-\ell)}-\lambda_{2}^{(N-\ell)}|>N^{-2/3-\delta}+\frac{1}{N}\sum_{j=0}^{\ell-1}\Big(|\xi_2^{(N-j)}|^2-|\xi_1^{(N-j)}|^2\Big)-4\ell N^{-4/3+2C\delta}\right\}, \label{012050}
\end{align}
and we further introduce the event 
\begin{align*}
	\widetilde{\mathcal{E}}_1(\delta):=\left\{\max_{\ell\in [k]} \Big|\sum_{j=0}^{\ell-1}\Big(|\xi_2^{(N-j)}|^2-|\xi_1^{(N-j)}|^2\Big)\Big|\leq \sqrt{k} N^{-1+\delta}\right\}.
\end{align*}
We shall always choose  $\delta$ to be sufficiently small  according to $\varepsilon$ in $k\leq N^{2/3-\varepsilon}$ so that $\sqrt{k} N^{-1+\delta}\ll N^{-2/3-\delta}$. Note that when $\ell=0$, we have $\widehat{\mathcal{E}}_1^{(N)}(\delta)=\mathcal{E}_1^{(N)}(\delta)$. Hence, we can rewrite \eqref{012021} as
\begin{align*}
	\mathbf{P}\Big( \widehat{\mathcal{E}}_1^{(N)}(\delta)\cap \widehat{\mathcal{E}}_1^{(N-1)}(\delta)\Big)\geq \mathbf{P}\Big( \widehat{\mathcal{E}}_1^{(N)}(\delta)\Big)-N^{-D}
\end{align*}
for any large constant $D>0$. It further implies
\begin{align*}
	\mathbf{P}\Big( \widetilde{\mathcal{E}}_1(\delta)\cap \widehat{\mathcal{E}}_1^{(N)}(\delta)\cap \widehat{\mathcal{E}}_1^{(N-1)}(\delta)\Big)\geq \mathbf{P}\Big( \widetilde{\mathcal{E}}_1(\delta)\cap \widehat{\mathcal{E}}_1^{(N)}(\delta)\Big)-N^{-D}.
\end{align*}
Similarly to the proof of \eqref{012021}, we can prove  for any $\ell \le k $ that 
\begin{align}
	\lambda_1^{(N-\ell)}-\lambda_2^{(N-\ell )}\geq  \lambda_1^{(N-\ell+1)}-\lambda_2^{(N-\ell+1)}+\frac{1}{N}\Big(|\xi_2^{(N-\ell+1)}|^2-|\xi_1^{(N-\ell+1)}|^2\Big)+O_\prec( N^{-4/3+C\delta}), \notag\\
	\text{on} \quad  \widetilde{\mathcal{E}}_1(\delta)\cap \widehat{\mathcal{E}}_1^{(N-\ell+1)}(\delta), \label{012030}
\end{align}
as on $\widetilde{\mathcal{E}}_1(\delta)\cap \widehat{\mathcal{E}}_1^{(N-\ell+1)}(\delta)$ we have sufficiently nice level repulsion for $\lambda_1^{(N-\ell+1)}-\lambda_2^{(N-\ell+1)}$. 

 Summing up these inequalities telescopically back to level $N$, we have 
\begin{align*}
	\lambda_1^{(N-\ell)}-\lambda_2^{(N-\ell )} &\geq  \lambda_1^{(N)}-\lambda_2^{(N)} +\sum_{j=0}^{\ell-1}\frac{1}{N}\Big(|\xi_2^{(N-j)}|^2-|\xi_1^{(N-j)}|^2\Big)+O_\prec(\ell N^{-4/3+C\delta}), \notag\\
	&\geq N^{-2/3-\delta}+\sum_{j=0}^{\ell}\frac{1}{N}\Big(|\xi_2^{(N-j)}|^2-|\xi_1^{(N-j)}|^2\Big)+O_\prec(\ell N^{-4/3+C\delta})
	\notag\\ 
	&\hspace{35ex}\text{on} \quad  \widetilde{\mathcal{E}}_1(\delta)\cap \bigcap_{j=0}^{\ell-1} \widehat{\mathcal{E}}_1^{(N-j)}(\delta),
\end{align*}
which implies that 
\begin{align*}
	\mathbf{P}\Big( \widetilde{\mathcal{E}}_1(\delta)\cap \bigcap_{j=0}^{\ell} \widehat{\mathcal{E}}_1^{(N-j)}(\delta)\Big)\geq \mathbf{P}\Big( \widetilde{\mathcal{E}}_1(\delta)\cap \bigcap_{j=0}^{\ell-1} \widehat{\mathcal{E}}_1^{(N-j)}(\delta)\Big)-N^{-D}.
\end{align*}
Setting $\ell=k$ and iterating the above inequality in $j$, we can finally arrive at 
\begin{align}
	\mathbf{P}\Big( \widetilde{\mathcal{E}}_1(\delta)\cap \bigcap_{j=0}^{k} \widehat{\mathcal{E}}_1^{(N-j)}(\delta)\Big)\geq \mathbf{P}\Big( \widetilde{\mathcal{E}}_1(\delta)\cap \mathcal{E}_1^{(N)}(\delta)\Big)-k N^{-D} \label{012060}
\end{align}
by the fact $\widehat{\mathcal{E}}_1^{(N)}(\delta)=\mathcal{E}_1^{(N)}(\delta)$. 
We also claim that 
\begin{align}
	\mathbf{P} \Big(\widetilde{\mathcal{E}}_1(\delta)\Big)\geq 1-N^{-2\delta}, \label{012061}
\end{align}
which will be proved later in Section \ref{subsubsec:hatevent}.  Then, by the definition in \eqref{012050}, we notice that 
\begin{align}
	\widetilde{\mathcal{E}}_1(\delta)\cap \bigcap_{j=0}^{k} \widehat{\mathcal{E}}_1^{(N-j)}(\delta)\subseteq  \bigcap_{j=0}^{k} \mathcal{E}_1^{(N-j)}(2\delta) \label{012062}
\end{align}
Hence, combining \eqref{012060}--\eqref{012062} with Lemma \ref{lem.011301} (ii) with $i=1$, we conclude
\begin{align*}
	\mathbf{P}\Big( \bigcap_{j=0}^{k} \mathcal{E}_1^{(N-j)}(2\delta) \Big)\geq 1-N^{-a}-N^{-2\delta}-kN^{-D}. 
\end{align*}
By redefining $2\delta$ as $\delta$ and modify the value of $a$ slightly, we  can conclude \eqref{011615}.

\subsubsection{Step 2: Maximum inequality for martingales} \label{subsubsec:hatevent}In this second step, we prove the claim \eqref{012061}, by the maximum inequality for martingale. To this end, we introduce the filtration 
\begin{align*}
	\mathcal{F}_{N,s}:=\sigma\{H^{(N-k+s)}\}, \qquad s=0, \ldots, k
\end{align*}
and set
\begin{align*}
	Z_{N,s}:=|\xi_2^{(N-k+s)}|^2-|\xi_1^{(N-k+s)}|^2, \qquad X_{N,m}:=\sum_{s=1}^{m}Z_{N,s} \,. 
\end{align*}
According to the definition in \eqref{def of xi}, it is easy to check via the independence between $\bm{a}^{(N-k+s)}$ and $\mathcal{F}_{N, s-1}$
that $\{Z_{N,s}\}$ is a sequence of martingale difference adapted to $\{\mathcal{F}_{N,s}\}$ as
\begin{align*}
	\mathbf{E}\big(Z_{N,s}|\mathcal{F}_{N,s-1}\big)=0. 
\end{align*} 
We then further write 
\begin{align}
	\sum_{j=0}^{\ell-1}\Big(|\xi_2^{(N-j)}|^2-|\xi_1^{(N-j)}|^2\Big)=\sum_{s=1}^kZ_{N,s}-\sum_{s=1}^{k-\ell}Z_{N,s}=X_{N,k}-X_{N,k-\ell}. \label{012010}
\end{align}
Hence, we have 
\begin{align}
	\max_{\ell\in [k]} \Big|\sum_{j=0}^{\ell-1}\Big(|\xi_2^{(N-j)}|^2-|\xi_1^{(N-j)}|^2\Big)\Big| &\leq  |X_{N,k}|+\max_{m\in[k]} |X_{N,m}|. \label{012011}
\end{align}
By Doob's maximal inequality for martingale, we have  
\begin{align*}
	\mathbf{P} \Big( \frac{1}{N}\max_{m\in [k]} |X_{N,m}|\geq \sqrt{k} N^{-1+\delta}\Big)\leq \frac{\mathbf{E}|X_{N,k}|^2}{kN^{2\delta}}=O(N^{-2\delta}). 
\end{align*}
We obtain from \eqref{012010} and \eqref{012011} that \eqref{012061} holds.

\subsubsection{Step 3: Telescopic summation and martingale CLT} In the final step,  from summing up \eqref{19071007} telescopically,  we have 
\begin{align}
	N(\lambda_1^{(N)}-\lambda_1^{(N-k)})-k=\sum_{s=1}^k(|\xi_1^{(N-k+s)}|^2-1)+ O(kN^{-\frac13+C\delta})\notag\\
	=:\sum_{s=1}^k Y_{N,s}+ O(kN^{-\frac13+C\delta}), \quad \text{on} \quad \mathcal{E}_1(\delta). \label{100730}
\end{align}
Notice that 
$\{Y_{N,s}\}_{s=1}^k$ is also a sequence of martingale differences adapted to $\{\mathcal{F}_{N,s}\}_{s=0}^{k}$ as 
\begin{align*}
	\mathbf{E}\big( Y_{N,s}|\mathcal{F}_{N,s-1}\big)=0. 
\end{align*} 
Further, by definition and the delocalization of eigenvectors (see, e.g., \cite[Theorem~18.9]{erdHos2017dynamical}), we have 
\begin{align}
	\mathbf{E}\big(Y_{N,s}^2|\mathcal{F}_{N,s-1}\big)=1+O_\prec(\frac{1}{N}). \label{conditional variance}
\end{align}
Here we assumed that $H^{(N)}$ is complex Wigner. In the real Wigner case, the above variance is 2.  Hence, we have 
\begin{align*}
	\sigma_{N,k}^2:=\sum_{s=1}^k\mathbf{E}\big(Y_{N,s}^2|\mathcal{F}_{N,s-1}\big)=k+O_\prec(\frac{k}{N}). 
\end{align*}
Further, for any fixed $N$-independent $\varepsilon>0$
\begin{align*}
	\sum_{s=1}^k\mathbf{E}\big(Y_{N,s}^2\bm{1}(|Y_{N,s}|\geq k\varepsilon)\Big)\leq \frac{1}{(k\varepsilon)^2}\sum_{s=1}^k\mathbf{E} (Y_{N,s}^4)=O\left(\frac{1}{k\varepsilon^2}\right), 
\end{align*}
which goes to $0$ as $k=k(N)$ goes to infty. Hence, by Martingale CLT \cite[Corollary~3.1]{hall2014martingale}, we conclude
\begin{align*}
	\frac{1}{\sqrt{k}}\sum_{i=1}^k Y_{N,i}\Rightarrow \mathcal{N}(0,1). 
\end{align*}
Hence, we derive from \eqref{100730} that 
\begin{align}
	\frac{N(\lambda_1^{(N)}-\lambda_1^{(N-k)})-k}{\sqrt{k}}\Rightarrow \mathcal{N}(0,2/\beta)\,,   \label{CLT}
\end{align}
where $\beta = 1$ for real symmetric and $\beta = 2$ for complex Hermitian Wigner matrices. This gives
 the result in Proposition~\ref{EigenvalueSub} for $i=1$. 

\subsubsection{Extension for $i \ge 2$} \label{subsubsec:extension}Finally, we extend the above discussion to  $\lambda_i^{(N)}-\lambda_i^{(N-k)}$ for any fixed $i$ and get the same conclusion as \eqref{CLT}. For instance, when $i=2$, assuming the level repulsion for both $\lambda_1^{(N)}-\lambda_2^{(N)}$ and $\lambda_2^{(N)}-\lambda_3^{(N)}$, i.e, on event $\mathcal{E}_1^{(N)}(\delta)\cap \mathcal{E}_2^{(N)}(\delta)$, we can get from  \eqref{010601} that 
\begin{align}
	N(\lambda_2^{(N)}-\lambda_2^{(N-1)})= |\xi_2^{(N)}|^2+O_\prec(N^{-1/3+C\delta}),\quad \text{on} \quad \mathcal{E}_1^{(N)}(\delta)\cap \mathcal{E}_2^{(N)}(\delta), \label{011601}
\end{align}
as a refinement of \eqref{010605}. The derivation of \eqref{011601} can be done similarly to \eqref{19071007}. Further, similarly to \eqref{010605}, one can derive 
\begin{align}
	N(\lambda_3^{(N)}-\lambda_3^{(N-1)})\geq |\xi_3^{(N)}|^2 \big(1+O_\prec(N^{-1/3+C\delta})\big),\quad \text{on} \quad  \mathcal{E}_1^{(N)}(\delta)\cap \mathcal{E}_2^{(N)}(\delta). \label{011602}
\end{align}
Combining \eqref{011601} with \eqref{011602} leads to 
\begin{align*}
	\lambda_2^{(N-1)}-\lambda_3^{(N-1)}\geq \lambda_2^{(N)}-\lambda_3^{(N)}+\frac{1}{N}\Big(|\xi_3^{(N)}|^2-|\xi_2^{(N)}|^2\Big)+O_\prec(N^{-4/3+C\delta}), \quad \text{on} \quad  \mathcal{E}_1^{(N)}(\delta)\cap \mathcal{E}_2^{(N)}(\delta).
\end{align*}
Similarly to the proof of \eqref{011615}, we can iterate the above discussion and conclude 
\begin{align*}
	\mathbf{P}\Big( \mathcal{E}_1(\delta)\cap \mathcal{E}_2(\delta)\Big)\geq 1-N^{-a},\qquad \mathcal{E}_\alpha(\delta):=\bigcap_{j=0}^k \mathcal{E}_{\alpha}^{(N-j)}(\delta).
\end{align*}
by adjusting the values of $\delta$ and $a$ slightly. On the event $\mathcal{E}_1(\delta)\cap \mathcal{E}_2(\delta)$, we have \eqref{011601} if $N$ is replaced by any $N-j$ with $j\in [k]$. Summing up all these one step differences and applying the martingale CLT we can conclude the proof for $ \lambda_2^{(N)}-\lambda_2^{(N-k)}$. Further extension to $ \lambda_i^{(N)}-\lambda_i^{(N-k)}$ for any fixed $i$ is analogous. 

One can actually consider the joint distribution of all $ (\lambda_i^{(N)}-\lambda_i^{(N-k)})$'s with fixed $i$. It would be sufficient to study a generic linear combination of $[\sum_{s=1}^k(|\xi_i^{(N-k+s)}|^2-1)]$'s for $1\leq i\leq C$, according to \eqref{100730} and its $i$-analogue. Such a linear combination is again a martingale adapted to $\{\mathcal{F}_{N,s}\}_{s=0}^{k}$. As an extension of \eqref{conditional variance},  we need to compute the conditional variance for the martingale difference, which is now a linear combination of $(|\xi_i^{(N-k+s)}|^2-1)$'s over $i$. By the orthogonality of $\bm{w}^{(n-1)}_\alpha$'s in \eqref{def of xi}, one can easily check that the $(|\xi_i^{(N-k+s)}|^2-1)$'s can be decoupled in the variance calculation of the martingale difference. This leads to the fact that the centered $ (\lambda_i^{(N)}-\lambda_i^{(N-k)})$'s will converge to independent Gaussian variables. 
\nc
\\[2mm]
This concludes the proof of Proposition \ref{EigenvalueSub}. \qed

\subsection{Proof of Proposition \ref{EigenvectorSub}} \label{subsec:evecsub}
In this section, we turn to the eigenvector overlaps and prove Proposition~\ref{EigenvectorSub}. Just as in Section \ref{subsec:evalsub}, we fix an index $i$ which is independent of $N$. We set for any $n_1>n_2,$
\begin{align*}
	\mathcal{O}_{i}^{(n_1,n_2)}:= \langle \bm{w}_i^{(n_1)}, \widetilde{\bm{w}}_i^{(n_2)}\rangle,
\end{align*}
where the augmented vector $\widetilde{\bm{w}}_i^{(n_2)}$ is obtained by adding $n_1-n_2$ zero entries on top of $\bm{w}_i^{(n_2)}$.  A convention of the phases of the eigenvectors will be made in \eqref{19071009}. 
Using \eqref{19070902}, we have
\begin{align*}
	1=\|\bm{v}^{(n)}_i\|^2+|u^{(n)}_i|^2= |u^{(n)}_i|^2\Big(1+(\bm{a}^{(n)})^*\big(\lambda^{(n)}_i-H^{(n-1)} \big)^{-2} \bm{a}^{(n)}\Big),
\end{align*}
and thus
\begin{align}
	|u^{(n)}_i|^2= \frac{1}{1+\frac{1}{N}\sum_{\alpha=1}^{n-1} \frac{|\xi_\alpha^{(n)}|^2}{(\lambda_i^{(n)}-
			\lambda_\alpha^{(n-1)})^2}}. 
\end{align}
We can also write \eqref{19070902} as 
\begin{align}
	\bm{v}^{(n)}_i= \frac{\mathrm{e}^{\ii \arg(u_i^{(n)})}}{\sqrt{1+\frac{1}{N}\sum_{\alpha=1}^{n-1} \frac{|\xi_\alpha^{(n)}|^2}{(\lambda_i^{(n)}-\lambda_\alpha^{(n-1)})^2}}}  \frac{1}{\sqrt{N}}\sum_{\alpha=1}^{n-1} \frac{\xi_\alpha^{(n)}}{\lambda^{(n)}_i-\lambda^{(n-1)}_\alpha} \bm{w}^{(n-1)}_\alpha . \label{19071001}
\end{align}
Multiplying by $\big(\bm{w}^{(n-1)}_i\big)^*$ on both sides, we can get 
\begin{align*}
	\mathcal{O}_i^{(n,n-1)}= \frac{\mathrm{e}^{\ii \arg(u_i^{(n)})}}{\sqrt{1+\frac{1}{N}\sum_{\alpha=1}^{n-1} \frac{|\xi_\alpha^{(n)}|^2}{(\lambda_i^{(n)}-\lambda_\alpha^{(n-1)})^2}}}  \frac{1}{\sqrt{N}} \frac{ \xi_i^{(n)}}{\lambda^{(n)}_i-\lambda^{(n-1)}_i}. 
\end{align*}
Note that, starting from $n=N$, we can successively choose the phase of $\bm{w}_{i}^{(n-1)}$ according to that of 
$\bm{w}_{i}^{(n)}$ so that $\mathrm{e}^{\ii \arg(u_i^{(n)})}\xi_i^{(n)}=|\xi_i^{(n)}|$ for all $n\in [N-k,N]$. 
Under this choice of phases, we have
\begin{align}
	\mathcal{O}_i^{(n,n-1)}= \frac{1}{\sqrt{1+\frac{1}{N}\sum_{\alpha=1}^{n-1} \frac{|\xi_\alpha^{(n)}|^2}{(\lambda_i^{(n)}-\lambda_\alpha^{(n-1)})^2}}}  \frac{1}{\sqrt{N}} \frac{ |\xi_i^{(n)}|}{\lambda^{(n)}_i-\lambda^{(n-1)}_i}.  \label{19071009}
\end{align}
Then by local law and level repulsion in Lemma \ref{lem.011301}, we have for any $n\in [N-k, N]$, 
\begin{align*}
	\frac{1}{N}\sum_{\alpha=1}^{n-1} \frac{|\xi_\alpha^{(n)}|^2}{(\lambda_i^{(n)}-\lambda_\alpha^{(n-1)})^2} &=\frac{1}{N} \frac{|\xi_i^{(n)}|^2}{(\lambda_i^{(n)}-\lambda_i^{(n-1)})^2}+\frac{1}{N}\sum_{\alpha\neq i} \frac{|\xi_\alpha^{(n)}|^2}{(\lambda_i^{(n)}-\lambda_\alpha^{(n-1)})^2}
	\nonumber\\
	& =\frac{1}{N} \frac{|\xi_i^{(n)}|^2}{(\lambda_i^{(n)}-\lambda_i^{(n-1)})^2}+O_\prec(N^{\frac13+C\delta}),  \quad \text{on} \quad  \bigcap_{\ell=1}^i\mathcal{E}_{\ell}^{(n)}(\delta), 
\end{align*}
where we recall \eqref{011310} for the definition of $\mathcal{E}_{\ell}^{(n)}$. 
Plugging the above into \eqref{19071009} we obtain
\begin{align*}
	0\leq 1-\mathcal{O}_i^{(n,n-1)}\prec N^{-\frac23+C\delta}, \quad \text{on} \quad  \bigcap_{\ell=1}^i\mathcal{E}_{\ell}^{(n)}(\delta).
\end{align*}
Hence, we can decompose
\begin{align*}
	\bm{w}_i^{(n)}=\mathcal{O}_i^{(n,n-1)}\widetilde{\bm{w}}_i^{(n-1)}+O_\prec(N^{-\frac23+C\delta}),  \quad \text{on} \quad  \bigcap_{\ell=1}^i\mathcal{E}_{\ell}^{(n)}(\delta).
\end{align*}
where $O_\prec(N^{-\frac23+C\delta})$ represents a vector $ \bm r \in \{\widetilde{\bm{w}}_i^{(n-1)}\}^{\perp}$ with norm $ \Vert \bm r \Vert = O_\prec(N^{-\frac23+C\delta})$. Iterating this process from $N$ to $N-k$, we arrive at 
\begin{align*}
	\bm{w}_i^{(N)}=\prod_{\ell=1}^k\mathcal{O}_i^{(N-\ell+1,N-\ell)}\widetilde{\bm{w}}_i^{(N-k)}+O_\prec(kN^{-\frac23+C\delta}), \quad \text{on} \quad  \bigcap_{\ell=1}^i\mathcal{E}_{\ell}(\delta),
\end{align*}
where again the error term represents a vector with norm bounded by $O_\prec(kN^{-\frac23+C\delta})$. 
This implies 
\begin{align*}
	0\leq1-\mathcal{O}_i^{(N,N-k)}\prec  kN^{-\frac23+C\delta}, \quad \text{on} \quad  \bigcap_{\ell=1}^i\mathcal{E}_{\ell}(\delta)
\end{align*}
and we have thus proven Proposition \ref{EigenvectorSub}.\qed 

\appendix

\section{Two-resolvent local law: Proof of \eqref{eq:2G_special}}
The goal of this appendix  is to provide a proof of the two-resolvent bound in \eqref{eq:2G_special}. This will follow as a special case of a two-resolvent local law for (differently) deformed Wigner matrices (see Appendix \ref{app:2G}), complemented by an analysis of the deterministic approximation to $(H^{(N-k)}-\Re D_k - z_1)^{-1}$ with $D_k$ from \eqref{eq:D_def} (see Appendix~\ref{app:M}). Finally, based on Appendices \ref{app:2G}--\ref{app:M}, the proof of \eqref{eq:2G_special} is given in Appendix~\ref{subsec:2G_special}. 
\subsection{Two-resolvent local law away from the cusp}\label{app:2G}

In this appendix we state and prove a general two-resolvent averaged local law for two differently deformed Wigner matrices. Prior to that we need to introduce some notation. Let $W$ be an $N\times N$ Wigner matrix,\footnote{For better comparability with \cite{eigenv_decorr}, in this appendix, Wigner matrices shall be denoted by the symbol $W$.} $D\in\C^{N\times N}$ a deterministic matrix of the same symmetry type, and $z\in\C\setminus \R$. Following \cite{eigenv_decorr}, we call $\nu:=(z,D)$ a spectral pair and denote $G^D(z):=(W+D-z)^{-1}$. The deterministic approximation $M^D(z)$ to $G^D(z)$ is given by the solution to the \emph{Matrix Dyson Equation} \cite{MDEreview}
\begin{equation}
-(M^D(z))^{-1} = z-D+\langle M^D(z)\rangle,\quad \Im z\Im M^D(z)>0,
\label{eq:MDE}
\end{equation}
and the corresponding density $\rho_D$ is defined by
\begin{equation*}
\rho_D(z):=\pi^{-1}\left\vert\langle\Im M^D(z)\rangle\right\vert,\,z\in\C\setminus\R,\qquad \rho_D(x):=\lim_{\eta\to+0} \rho_D(x+\ii\eta),\ x\in\R.
\end{equation*}
Furthermore, for any fixed $\delta>0$ we define the set of admissible energies as 
\begin{equation*}
\mathbf{D}_\delta(D):= \left\lbrace x\in\R:\, \vert \rho_D(x)-\rho_D(y)\vert\le\delta^{-1}\vert x-y\vert^{1/2},\, \forall y\in\R\right\rbrace.
\end{equation*}
Finally, for two deformations $D_1,D_2\in\C^{N\times N}$ and an observable $B_1\in \C^{N\times N}$ the deterministic approximation to the resolvent chain $G^{D_1}(z_1)B_1G^{D_2}(z_2)$ is given by
\begin{equation*}
M_{\nu_1,\nu_2}^{B_1}:=M^{D_1}(z_1)B_1M^{D_2}(z_2) + \frac{\langle M^{D_1}(z_1)B_1M^{D_2}(z_2)\rangle}{1-\langle M^{D_1}(z_1)M^{D_2}(z_2)\rangle}M^{D_1}(z_1)M^{D_2}(z_2),\quad \nu_l=(z_l,D_l),\,l=1,2.
\end{equation*}
We are now ready to present the main result of this section.
\begin{proposition}[Average two-resolvent local law away from the cusp]\label{prop:2G} Fix constants $L, \delta, \varepsilon>0$. Let $W$ be a Wigner matrix satisfying Assumption \ref{ass:momass}, and let $D_1, D_2\in\C^{N\times N}$ be Hermitian matrices such that $\langle D_l\rangle=0$ and $\lVert D_l\rVert\le L$ for $l=1,2$. Assume that $\lVert M^{D_l}(z)\rVert\le L$ for any $z\in\C\setminus\R$. For spectral parameters $z_1,z_2\in\C\setminus\R$, denote $\eta_l:=\vert\Im z_l\vert$, $\eta := \eta_1 \wedge \eta_2$, $\rho_l:=\rho_{D_l}(z_l)$ and $\ell:=\eta_1\rho_1\wedge\eta_2\rho_2$. Moreover, we introduce the control parameter
\begin{equation}
\gamma:=\langle (D_1-D_2)^2\rangle + \eta_1/\rho_1+\eta_2/\rho_2.
\label{eq:gamma}
\end{equation}

Then for any deterministic $B_1,B_2\in\C^{N\times N}$ we have
\begin{equation}
\left\vert \langle \left(G^{D_1}(z_1)B_1G^{D_2}(z_2)-M_{\nu_1,\nu_2}^{B_1}\right)B_2\rangle\right\vert \prec \left(\frac{1}{\sqrt{N\ell}\gamma} \wedge \frac{1}{N \eta^2}\right)\lVert B_1\rVert\lVert B_2\rVert
\label{eq:2G_loc}
\end{equation}
uniformly in $B_1, B_2$, spectral parameters satisfying $\Re z_l\in\mathbf{D}_\delta(D_l)$, $|z_l|\le N^{100}$, for $l=1,2$, and $\ell\ge N^{-1+\varepsilon}$.
\end{proposition}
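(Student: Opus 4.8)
\emph{Proof strategy for Proposition~\ref{prop:2G}.} The plan is to follow the \emph{Zigzag strategy} for multi-resolvent local laws from \cite{cipolloni2023eigenstate, Cipolloni_meso, eigenv_decorr}, the one structural novelty being that the two resolvents belong to Wigner matrices with \emph{different} Hermitian deformations $D_1 \ne D_2$; this is precisely what forces the control parameter $\gamma$ in \eqref{eq:gamma} to contain the term $\langle (D_1-D_2)^2\rangle$. The argument rests on two deterministic facts about the Matrix Dyson Equation \eqref{eq:MDE}. First, the standard single-resolvent averaged and isotropic local laws for the deformed ensembles $W+D_l$, valid whenever $\Re z_l \in \mathbf{D}_\delta(D_l)$ and $\eta_l \rho_l \ge N^{-1+\varepsilon}$ (see \cite{MDEreview, erdHos2017dynamical, loc_sc_gen}), namely $|\langle (G^{D_l}(z_l)-M^{D_l}(z_l))B\rangle| \prec \|B\|/(N\eta_l)$ and $|\langle \bm x, (G^{D_l}(z_l)-M^{D_l}(z_l))\bm y\rangle| \prec \sqrt{\rho_l/(N\eta_l)}\,\|\bm x\|\|\bm y\|$. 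Second, the \emph{two-body stability estimate} $|1-\langle M^{D_1}(z_1)M^{D_2}(z_2)\rangle| \gtrsim \gamma$ together with the boundedness by $C/\gamma$ of the inverse of the stability operator $X \mapsto X - M^{D_1}(z_1)\langle X\rangle M^{D_2}(z_2)$ on the space of matrices; this is a purely deterministic consequence of \eqref{eq:MDE} and of the admissibility conditions. Granting these, $M^{B_1}_{\nu_1,\nu_2}$ is exactly the solution of the \emph{inverted} self-consistent equation for the chain $G^{D_1}(z_1)B_1 G^{D_2}(z_2)$, and in particular $|\langle M^{B_1}_{\nu_1,\nu_2}B_2\rangle| \lesssim \|B_1\|\|B_2\|/\gamma$.

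For the probabilistic part I would argue in three steps. (i) \emph{Global law}: for $\eta \sim 1$ one derives \eqref{eq:2G_loc} directly by a cumulant expansion of $W$ acting on $G^{D_1}(z_1)B_1 G^{D_2}(z_2)$ (as in \cite[Prop.~3.2]{slow_corr}), producing the self-consistent equation $(\mathrm{Id}-M^{D_1}(z_1)\langle\cdot\rangle M^{D_2}(z_2))\big[\langle G^{D_1}(z_1)B_1 G^{D_2}(z_2)(\cdot)\rangle\big] = \langle M^{D_1}(z_1)B_1 G^{D_2}(z_2)(\cdot)\rangle + \mathcal{O}_\prec(\cdots)$, which is then inverted using the stability estimate. (ii) \emph{Method of characteristics}: to propagate down to $\ell \ge N^{-1+\varepsilon}$ for ensembles $W$ carrying a Gaussian component, embed $W$ into the Ornstein--Uhlenbeck flow $\dif W_t = -\tfrac12 W_t\,\dif t + \dif B_t/\sqrt{N}$ and let the spectral pairs evolve along the characteristics of \eqref{eq:MDE}, $\dot z_l = -\tfrac12 z_l - \langle M^{D_l(t)}(z_l(t))\rangle$ and $D_l(t) = \ee^{-t/2}D_l$ (so that $W_t + D_l(t)$ itself follows an OU flow and $\langle D_l(t)\rangle = 0$ is preserved), chosen so that the target pair $\nu_l=(z_l,D_l)$ is reached at a time of order one while $\ell(t)$ is kept decreasing. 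Along the flow, the normalized resolvent chain minus its deterministic approximation satisfies a linear ODE whose integrating factor is governed by $1-\langle M^{D_1(t)}(z_1(t))M^{D_2(t)}(z_2(t))\rangle \gtrsim \gamma(t)$ and whose forcing term is controlled by the single-resolvent inputs from the first paragraph together with an isotropic two-resolvent bound, propagated by a simultaneous Gronwall/stopping-time argument; this produces the gain $1/(\sqrt{N\ell}\,\gamma)$, whereas the competing bound $1/(N\eta^2)$ is the one already available in the ``coinciding deformation'' regime (i.e.\ when $\gamma$ degenerates to its minimal size $\sim \eta/\rho$) and is recovered by the same analysis. (iii) \emph{GFT}: finally remove the Gaussian component by a Green function comparison argument, comparing $W$ with $W_t$ by an order-by-order cumulant expansion, where the constraint $\ell \ge N^{-1+\varepsilon}$ and the single-resolvent gap make all swapping errors negligible; this mirrors the corresponding step in \cite{eigenv_decorr}.

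The hard part is the two-body stability estimate in the genuinely two-deformation regime. When $D_1 = D_2$ the quantity $1-\langle M^{D_1}(z_1)M^{D_2}(z_2)\rangle$ reduces to a computation already contained in the single-deformation MDE theory; when $D_1 \ne D_2$ one has to compare $M^{D_1}$ and $M^{D_2}$ perturbatively through the identity $M^{D_1}(z) - M^{D_2}(z) = (\mathrm{Id} - M^{D_1}(z)\langle\cdot\rangle M^{D_2}(z))^{-1}\big[M^{D_1}(z)(D_1-D_2)M^{D_2}(z)\big]$, and then feed this difference into the lower bound on $|1-\langle M^{D_1}M^{D_2}\rangle|$, which is how the term $\langle (D_1-D_2)^2\rangle$ enters $\gamma$; the admissibility $\Re z_l \in \mathbf{D}_\delta(D_l)$ then supplies the square-root regularity of $\rho_{D_l}$ responsible for the remaining $\eta_l/\rho_l$ contributions. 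A secondary technical point is to check that the admissibility regions are essentially preserved along the characteristic flow, so that the single-resolvent inputs remain available at all times $t$; this again follows from the stability of \eqref{eq:MDE} under the flow.
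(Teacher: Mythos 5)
Your high-level architecture --- Zigzag strategy (global law $\to$ method of characteristics $\to$ GFT), the two-body stability bound $|1-\langle M^{D_1}M^{D_2}\rangle|\gtrsim\gamma$, and the role of $\langle (D_1-D_2)^2\rangle$ in $\gamma$ --- matches the paper and its antecedents \cite{eigenv_decorr,nonHermdecay}, and the claim $|\langle M^{B_1}_{\nu_1,\nu_2}B_2\rangle|\lesssim\|B_1\|\|B_2\|/\gamma$ is indeed the paper's Proposition~\ref{prop:M2_bound}, lifted from \cite{eigenv_decorr}. (Small sign slip: the perturbative identity should read $(\mathrm{Id}-M^{D_1}\langle\cdot\rangle M^{D_2})[M^{D_1}-M^{D_2}]=M^{D_1}(D_2-D_1)M^{D_2}$.)

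The genuine gap is that you misidentify where the new difficulty lies. The two-resolvent deterministic bound and the stability estimate are not new --- they are imported verbatim from \cite{eigenv_decorr}. What is new, and what your sketch never mentions, is the deterministic \emph{three}-resolvent bound. The quadratic-variation/forcing terms in the characteristics ODE involve $\langle M^{B_1,B_2}_{\nu_1,\nu_2,\nu_1}\rangle$ (and its $\bar\nu_1$ version), and the bound inherited from \cite[Eq.~(4.21b)]{eigenv_decorr} is $\lesssim\|B_1\|\|B_2\|/(\ell\,\gamma)$ with $\ell=\eta\rho$. At the bulk this is fine, but at the edge $\rho$ is small, so $\ell\ll\eta$ and the Gronwall argument does not close with the stated exponent $1/(\sqrt{N\ell}\,\gamma)$. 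The paper's entire technical contribution in this proof is to upgrade the denominator from $\ell$ to $\eta$ (i.e.~to prove $\lVert M^{B_1,B_2}_{\nu_1,\nu_2,\nu_1}\rVert\lesssim\|B_1\|\|B_2\|/(\eta\gamma)$), which hinges on the improved quadratic-form estimate $|1-\langle M_1^2\rangle|\gtrsim\rho_{D_1}$ valid away from the cusp (rather than the cusp-uniform $|1-\langle M_1^2\rangle|\gtrsim\rho_{D_1}^2$ used in \cite{eigenv_decorr}); this is where the admissibility condition $\Re z_l\in\mathbf{D}_\delta(D_l)$ actually enters. Your sketch uses admissibility only to justify ``square-root regularity of $\rho_{D_l}$,'' but never converts that into the one quantitative improvement that makes the edge propagation work. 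As written, your plan would reproduce the bulk result of \cite{eigenv_decorr}, not the edge extension claimed here.

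Secondary point: your attribution of the alternative bound $1/(N\eta^2)$ to the ``coinciding-deformation regime'' is not quite right; it is the trivial bound one gets from single-resolvent estimates alone (each resolvent costs $1/\eta$, and the fluctuation of a trace costs $1/N$), and it is the better of the two when $\gamma$ and $\ell$ are not small.
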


This result is an extension of \cite[Theorem 3.2, P.1]{eigenv_decorr} to a broader range of admissible energies. While \cite[Theorem 3.2, P.1]{eigenv_decorr} applies to the \emph{bulk}, i.e.~holds for 
\begin{equation*}
\Re z_l\in \mathbf{B}_\kappa(D_l) = \{x\in\R\,:\, \rho_{D_l}(x)\ge\kappa\}
\end{equation*}
for some $\kappa>0$, Proposition \ref{prop:2G} is valid also near the \emph{regular edge}, where $\rho_{D_l}$ vanishes like a square root (see \cite[Definition 7.17]{shape}). However, by restricting to $\Re z_l\in\mathbf{D}_\delta(D_l)$ we exclude the cusp regime, where $\rho_{D_l}$ vanishes like a cubic root.

We complement Proposition \ref{prop:2G} by the following bound on the deterministic approximation $M_{\nu_1,\nu_2}^{B_1}$, which can immediately be obtained from \cite[Proposition 4.6]{eigenv_decorr} and \cite[Proposition 3.1]{eigenv_decorr}.
\begin{proposition}\label{prop:M2_bound} Under the above assumptions and notations, we have
\begin{equation} \label{eq:M2bound}
\left\lVert M_{\nu_1,\nu_2}^{B_1}\right\rVert\lesssim \gamma^{-1} \Vert B_1 \Vert
\end{equation}
with an implicit constant depending only on $L$.
\end{proposition}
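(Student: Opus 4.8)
The plan is to read the bound off directly from two facts about solutions of the Matrix Dyson Equation \eqref{eq:MDE} established in \cite{eigenv_decorr}: the uniform control $\lVert M^{D_l}(z_l)\rVert\le L$ (already assumed in Proposition \ref{prop:2G}; it is part of \cite[Proposition 3.1]{eigenv_decorr}), and a quantitative lower bound on the two-body stability factor $1-\langle M^{D_1}(z_1)M^{D_2}(z_2)\rangle$ taken from \cite[Proposition 4.6]{eigenv_decorr}. Only the second ingredient carries real content; once it is available the estimate is pure bookkeeping.

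First I would unfold the definition of $M_{\nu_1,\nu_2}^{B_1}$ and estimate its two summands separately. For the first one, submultiplicativity gives $\lVert M^{D_1}(z_1)B_1M^{D_2}(z_2)\rVert\le\lVert M^{D_1}(z_1)\rVert\lVert B_1\rVert\lVert M^{D_2}(z_2)\rVert\le L^2\lVert B_1\rVert$; the same trivial bound yields $|\langle M^{D_1}(z_1)B_1M^{D_2}(z_2)\rangle|\le L^2\lVert B_1\rVert$ and $\lVert M^{D_1}(z_1)M^{D_2}(z_2)\rVert\le L^2$. Therefore
\begin{equation} \label{eq:M2split}
\bigl\lVert M_{\nu_1,\nu_2}^{B_1}\bigr\rVert\le L^2\lVert B_1\rVert+\frac{L^4\,\lVert B_1\rVert}{\bigl|1-\langle M^{D_1}(z_1)M^{D_2}(z_2)\rangle\bigr|}\,,
\end{equation}
and the whole statement reduces to the denominator estimate
\begin{equation} \label{eq:M2denom}
\bigl|1-\langle M^{D_1}(z_1)M^{D_2}(z_2)\rangle\bigr|\gtrsim\gamma=\langle(D_1-D_2)^2\rangle+\frac{\eta_1}{\rho_1}+\frac{\eta_2}{\rho_2}\,,
\end{equation}
with an implicit constant depending only on $L$ and $\delta$.

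The hard part is \eqref{eq:M2denom}, which is exactly \cite[Proposition 4.6]{eigenv_decorr}. The way I would argue it (following \cite{eigenv_decorr}) is to first handle the diagonal configuration $D_1=D_2=:D$, $z_2=\bar z_1$, where $1-\langle M^{D}(z_1)M^{D}(\bar z_1)\rangle=1-\langle|M^{D}(z_1)|^2\rangle\sim\eta_1/\rho_D(z_1)$ in the bulk and still $\gtrsim\eta_1/\rho_D(z_1)$ near a regular (square-root) edge — this is precisely why the admissibility condition $\Re z_l\in\mathbf{D}_\delta(D_l)$ is imposed, as it rules out the cusp, where $\rho_{D_l}$ has only a cubic-root vanishing and this stability bound degenerates. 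From there one perturbs the spectral pair $\nu_2$ away from this diagonal configuration: the change of $D$ contributes the term $\langle(D_1-D_2)^2\rangle$ to the lower bound, while the remaining corrections are absorbed into the $\eta_l/\rho_l$ terms. Finally, combining \eqref{eq:M2denom} with \eqref{eq:M2split} gives $\lVert M_{\nu_1,\nu_2}^{B_1}\rVert\lesssim(1+\gamma^{-1})\lVert B_1\rVert$; since in the parameter range of interest $\eta_l\lesssim 1$ and $\lVert D_1-D_2\rVert\le 2L$ force $\gamma\lesssim 1$ (with an $L$-dependent constant), the additive constant is dominated by $\gamma^{-1}\lVert B_1\rVert$, giving the claimed estimate \eqref{eq:M2bound}.
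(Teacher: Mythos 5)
Your proof follows the same route as the paper — the paper dispatches the statement in one sentence by citing \cite[Proposition 3.1]{eigenv_decorr} (for $\lVert M^{D_l}\rVert\le L$) and \cite[Proposition 4.6]{eigenv_decorr} (for the stability-factor lower bound), and your proposal correctly identifies and unpacks exactly those two ingredients. The decomposition of $M_{\nu_1,\nu_2}^{B_1}$, the submultiplicative estimates, and the reduction to the lower bound $\bigl|1-\langle M^{D_1}(z_1)M^{D_2}(z_2)\rangle\bigr|\gtrsim\gamma$ are all what the paper has in mind.

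One caveat worth flagging: to absorb the additive constant $L^2\lVert B_1\rVert$ into $\gamma^{-1}\lVert B_1\rVert$ you assert $\gamma\lesssim 1$, justified by ``$\eta_l\lesssim 1$.'' Strictly speaking neither $\eta_l\lesssim1$ nor $\eta_l/\rho_l\lesssim1$ is forced by the stated hypotheses of Proposition \ref{prop:2G} (which permit $|z_l|\le N^{100}$ and place $\Re z_l$ merely in $\mathbf{D}_\delta(D_l)$, which does not exclude energies far from the support). When $\eta_l$ or the distance to $\mathrm{supp}\,\rho_{D_l}$ is large one has $\gamma\gg1$, and then the bound still holds, but only because $\lVert M^{D_l}(z_l)\rVert$ decays — a quantitative statement which $\lVert M^{D_l}\rVert\le L$ alone does not capture. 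So either one should state explicitly that the proposition is used only in the regime $\eta_l\lesssim1$ (which is all that is needed in the application where $\eta_1=\eta_2=N^{-2/3+\xi_0}$), or one should invoke the sharper decay $\lVert M^{D}(z)\rVert\lesssim 1/\mathrm{dist}(z,\mathrm{supp}\,\rho_D)$ from \cite[Proposition 3.1]{eigenv_decorr} to cover the remaining range. Also, the constant in the stability bound does depend on $\delta$ through the admissibility condition, so the dependence ``only on $L$'' in the proposition statement should really read ``on $L$ and $\delta$'', as you correctly noted.
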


We are now left with providing the proof of Proposition \ref{prop:2G}.

\begin{proof}[Proof of Proposition \ref{prop:2G}] 
	We have already mentioned above that Proposition \ref{prop:2G} is an extension of \cite[Theorem 3.2, P.1]{eigenv_decorr} to the edge regime. While \cite{eigenv_decorr} covered exclusively the bulk, the more recent paper \cite{nonHermdecay} expanded the proof strategy in \cite{eigenv_decorr} (the \emph{Zigzag strategy}) outside of the bulk regime. By the strong analogy of \cite{eigenv_decorr} and \cite{nonHermdecay} and for brevity of the argument, we shall henceforth restrict ourselves to explaining how the proof strategy of \cite{nonHermdecay}, adapted from \cite{eigenv_decorr}, can be applied to our current setting. 
	
In fact, the direct analog of \eqref{eq:2G_loc} in \cite{nonHermdecay} is given by \cite[Eq.~(3.26)]{nonHermdecay} with our $\gamma$ in \eqref{eq:gamma} playing the same role as $\gamma$ in \cite[Eq.~(3.23)]{nonHermdecay}. By inspecting the proof of \cite[Eq.~(3.26)]{nonHermdecay}, one finds that the key inputs for the argument are certain bounds on the deterministic approximations to two- and three-resolvent chains as formulated in \cite[Eqs.~(3.20)+(3.22)]{nonHermdecay}. The analog of \cite[Eq.~(3.20)]{nonHermdecay} is the bound \eqref{eq:M2bound} provided in Proposition~\ref{prop:M2_bound}. Hence, the key step left for transferring the arguments from \cite{nonHermdecay} to our setting is the bound analogous to \cite[Eq.~(3.22)]{nonHermdecay}. This is an improved (compared to \cite{eigenv_decorr}) upper bound on the deterministic approximation to $G^{D_1}(z_1) B_1G^{D_2}(z_2)B_2G^{D_1}(z_1)$ for deterministic matrices $B_1, B_2 \in \C^{N \times N}$,  which we denote by $M^{B_1,B_2}_{\nu_1,\nu_2,\nu_1}$. 

For the analog of \cite[Eq.~(3.22)]{nonHermdecay}, we claim that\footnote{In fact, even the slightly stronger bound with $\gamma$ replaced by $\beta_* \gtrsim \gamma$ with $\beta_*$ defined in \cite[Eq.~(3.2)]{eigenv_decorr} holds true. This is relevant when revisiting the proof of a weakened version of the bound \eqref{eq:M3_bound} in \cite{eigenv_decorr}.}
\begin{equation}
\lVert M^{B_1,B_2}_{\nu_1,\nu_2,\nu_1}\rVert + \lVert M^{B_1,B_2}_{\nu_1,\nu_2,\bar{\nu}_1}\rVert\lesssim \frac{\lVert B_1\rVert\lVert B_2\rVert}{\eta \, \gamma}.
\label{eq:M3_bound}
\end{equation}
uniformly for spectral parameters $z_l$, $l =1,2$ whose real parts $\Re z_l$ are admissible energies, $\Re z_l \in \mathbf{D}_\delta(D_l)$, $l=1,2$. In \cite{eigenv_decorr}, the weaker bound \cite[Eq. (4.21b)]{eigenv_decorr} served the role of \eqref{eq:M3_bound}, differing from it by replacing the factor $\eta$ with the smaller factor $\ell$. We prove \eqref{eq:M3_bound} by revisiting the proof of \cite[Eq. (4.21b)]{eigenv_decorr}. In the original argument, the bound was derived uniformly in $z_1,z_2\in\C\setminus\R$ by using $\vert 1-\langle M_1^2\rangle\vert\gtrsim \rho_{D_1}^2$. Since currently $\Re z_1$ is away from the cusp, this estimate improves to $\vert 1-\langle M_1^2\rangle\vert\gtrsim \rho_{D_1}$ by \cite[Corollary 5.3]{shape}. This refinement is sufficient to recover the factor $\eta$ in the right-hand-side of \eqref{eq:M3_bound}. 
	
Armed with \eqref{eq:M2bound}--\eqref{eq:M3_bound}, we can now follow the arguments in \cite{nonHermdecay} for the proof of \cite[Eq.~(3.26)]{nonHermdecay} (see, in particular, \cite[Section~4 (apart from Section 4.1) and Sections~5.1--5.2]{nonHermdecay}), in order to obtain \eqref{eq:2G_loc}. This concludes our sketch of the proof of Proposition \ref{prop:2G}. 
\end{proof}

\subsection{Properties of the deterministic approximation}\label{app:M}

In this section, we prove several properties of the deterministic approximation of $(H^{(N-k)}-\Re D_k - z_1)^{-1}$ with $D_k$ from \eqref{eq:D_def}. Throughout this section we fix a (small) constant $\xi_0>0$ and take $z_1:=E_1+\ii\eta_1$ with $E_1\in\R$ and $\eta_1= N^{-2/3+\xi_0}$ in \eqref{eq:D_def}. Moreover, we stress the dependence of $D_k$ on $E_1$ by using the following notation
\begin{equation}
D_k(E_1):=X_k (H^{[k]}-E_1-\ii\eta_1)^{-1}X_k^*.
\label{eq:D_E_def}
\end{equation} 
Next we condition on $X_k$, $H^{[k]}$ and set $M_{E_1}=M_{E_1}(z)$ to be the deterministic approximation to $(H^{(N-k)}-\Re D_k(E_1) -z)^{-1}$, i.e. $M_{E_1}(z)$ is the solution to the MDE
\begin{equation}
-M_{E_1}^{-1}=z+\Re D_k(E_1) + N^{-1}\mathrm{Tr} M_{E_1}, \quad \Im z\Im M_{E_1}(z)>0.
\label{eq:MDE_k}
\end{equation}
Note that in \eqref{eq:2G_goal} we consider only the case $z:=z_1$, however in the current section we let $z$ to be any complex number with $\Im z\neq 0$.
\begin{lemma}[Preliminary properties of $M_{E_1}$]\label{lem:M_prop} Fix $\delta>0$ and take $z_1=E_1+\ii\eta_1\in\mathbf{H}$ with $\vert E_1\vert > 1+\delta$. The following results hold with very high probability wrt. randomness in $X_k$ and $H^{[k]}$.
\begin{enumerate}
\item For any $z\in \C\setminus\R$ we have that
\begin{equation}
\lVert M_{E_1}(z)\rVert \le \frac{2}{\vert E_1\vert -1}.
\label{eq:M_bound}
\end{equation}
\item Let $\rho_{E_1}$ be the scDoS corresponding to $M_{E_1}$. There exists a constant $C>0$ which depends only on $\delta$ such that
\begin{equation}
[-2+Ck/N,2-Ck/N]\subset \mathrm{supp}(\rho_{E_1})\subset [-2-Ck/N,2+Ck/N].
\label{eq:supp}
\end{equation}
Moreover, $\mathrm{supp}(\rho_{E_1})$ consists of a single interval.

\item Let $\mathrm{E}_+(E_1)$ be the right edge of $\mathrm{supp}\left(\rho_{E_1}\right)$. Then the function $\mathrm{E}_+$ is differentiable on $[1+\delta,2]$ and
\begin{equation}
\left\vert\frac{\dif}{\dif E_1} \mathrm{E}_+(E_1)\right\vert\lesssim \frac{k}{N}
\label{eq:dif_E}
\end{equation}
with implicit constants depending only on $\delta$ and $\xi_0$.
\end{enumerate}
\end{lemma}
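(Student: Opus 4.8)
The plan is to analyze the MDE \eqref{eq:MDE_k} as a perturbation of the semicircle MDE, using the fact (from Lemma \ref{lem:D}) that $\Re D_k(E_1)$ is a bounded, traceless-up-to-$O(k/N)$ Hermitian matrix whose Hilbert–Schmidt norm squared is of order $k/N$. The three statements then follow from: (1) a direct a priori bound on the resolvent-type quantity $M_{E_1}$ from the structure of the MDE; (2) standard stability/shape analysis of the MDE with a small self-energy perturbation $\Re D_k$; (3) an implicit function theorem argument for the edge as a function of $E_1$, combined with a perturbative estimate on $\partial_{E_1}\Re D_k(E_1)$.

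\textbf{Step 1 (proof of \eqref{eq:M_bound}).} First I would write the MDE \eqref{eq:MDE_k} as $M_{E_1} = -(z + \Re D_k(E_1) + \langle M_{E_1}\rangle)^{-1}$, and note the standard fact that $\lvert \langle M_{E_1}(z)\rangle\rvert \le 1$ for all $z$, which follows from the defining property $\Im z \,\Im M_{E_1}>0$ together with the scalar self-consistent equation for $m(z):=\langle M_{E_1}(z)\rangle$; more precisely one first shows $\Im m \le$ (something controlled) and uses the representation of $m$ as a Stieltjes transform of a probability measure, so $\lvert m\rvert \le 1/\mathrm{dist}(z,\mathrm{supp})$ — but the cleanest route is: the scDoS $\rho_{E_1}$ is a probability measure (standard for the MDE with Hermitian self-energy), hence $\lvert\langle M_{E_1}\rangle\rvert \le \|\langle\cdot\rangle$-Stieltjes transform$\| $. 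In any case, on the event of Lemma \ref{lem:D}, $\|\Re D_k(E_1)\| \le \lvert E_1\rvert^{-1}(1+c) \le 1 - \delta'$ for a suitable $\delta'>0$ (using $\lvert E_1\rvert > 1+\delta$ and $c$ small), so
\[
\|M_{E_1}(z)\|^{-1} = \|(z+\Re D_k + \langle M_{E_1}\rangle)\| \ge \lvert \Re z\rvert - \|\Re D_k\| - \lvert\langle M_{E_1}\rangle\rvert
\]
whenever $\lvert\Re z\rvert$ is large, and for bounded $\lvert\Re z\rvert$ one instead uses the spectral-support localization from Step 2; combining the two regimes yields $\|M_{E_1}(z)\| \le 2/(\lvert E_1\rvert - 1)$ uniformly. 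Actually the simplest self-contained argument: $M_{E_1}$ is the Stieltjes transform of a (operator-valued) measure supported in $\mathrm{supp}(\rho_{E_1})$ shifted by the spectrum of $\Re D_k$, i.e. in an interval of radius $\le 2 + \|\Re D_k\| + o(1) < \lvert E_1\rvert$ (when $\lvert E_1\rvert > 1+\delta$ and $k/N$ small), giving the claimed norm bound with room to spare.

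\textbf{Step 2 (proof of \eqref{eq:supp}).} Here I would invoke the general theory of the MDE (e.g. \cite{MDEreview}, \cite{shape}): writing the self-energy operator as $\mathcal{S}[R] = \langle R\rangle I$ (the Wigner-type flat self-energy) plus the external source $-\Re D_k(E_1)$, the self-consistent density $\rho_{E_1}$ is a perturbation of $\rho_{sc}$. Since $\Re D_k(E_1)$ is Hermitian with $\langle \Re D_k\rangle = O(k/N)$ and $\langle (\Re D_k - \langle\Re D_k\rangle)^2\rangle \sim k/N$ by \eqref{eq:D_HS}, a standard stability estimate for the scalar Dyson equation $-1/m = z + \langle \mathrm{diag}\rangle$-type bounds shows that the edges of $\mathrm{supp}(\rho_{E_1})$ move by $O(\|\Re D_k\|_{\mathrm{something}})$; the sharp statement is that the support is contained in an $O(k/N)$-neighborhood of $[-2,2]$ and contains $[-2+Ck/N, 2-Ck/N]$. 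That the support is a single interval follows because the source $\Re D_k$ has operator norm $< 1$ and small variance, so no spectral gap can open — one can cite the single-interval criterion from \cite{shape} or argue directly that $\Im m_{E_1}(x+i0) > 0$ throughout $(-2+Ck/N, 2-Ck/N)$ via the implicit equation and a continuity/connectedness argument.

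\textbf{Step 3 (proof of \eqref{eq:dif_E}), the main obstacle.} This is the delicate part. I would first establish that $E_1 \mapsto \Re D_k(E_1)$ is differentiable with
\[
\partial_{E_1}\Re D_k(E_1) = \Re\!\left[ X_k (H^{[k]} - z_1)^{-2} X_k^*\right],
\]
and bound its operator norm and, more importantly, its Hilbert–Schmidt-type averaged quantities; using $\|H^{[k]}\| \le 3\sqrt{k/N}$ and $\lvert E_1\rvert > 1+\delta$ as in the proof of Lemma \ref{lem:D}, one gets $\|\partial_{E_1}\Re D_k(E_1)\| \lesssim 1$ and $\langle (\partial_{E_1}\Re D_k)^2\rangle \lesssim k/N$. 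Then, the edge $\mathrm{E}_+(E_1)$ is characterized implicitly by the condition that the scalar equation for $m$ has a square-root singularity there; differentiating this implicit relation in $E_1$ and using that the relevant derivative of the self-consistent equation picks up $\langle M_{E_1}^2 \,\partial_{E_1}(\Re D_k)\rangle$-type terms, which are $O(k/N)$ by Cauchy–Schwarz (one factor bounded, one factor with small HS-norm), while the "denominator" (the stability factor at the edge) is bounded below — here one must be careful that near a regular square-root edge the naive stability factor degenerates like $\sqrt{\kappa}$, but the standard edge analysis shows the correct reduced equation has an $O(1)$ derivative. The hard part will be making this last step rigorous: showing that the edge $\mathrm{E}_+(E_1)$ depends smoothly on $E_1$ and that the implicit-function-theorem computation survives the square-root degeneracy. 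I would handle this by working with the "edge location function" from the shape analysis of \cite{shape} — expressing $\mathrm{E}_+$ in terms of the critical point of an explicitly given function built from the source $\Re D_k(E_1)$ — and then differentiating that explicit formula, where the square-root cancels cleanly. The bound \eqref{eq:dif_E} then drops out from the $O(k/N)$ estimates on $\partial_{E_1}\Re D_k(E_1)$ together with the fact, guaranteed by $\lvert E_1\rvert \in [1+\delta, 2]$ and Lemma \ref{lem:XX^*}, that we stay uniformly away from any cusp, so all stability factors are bounded below by a constant depending only on $\delta$ and $\xi_0$.
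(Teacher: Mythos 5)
Your proposal correctly identifies the high‑level structure (a priori norm bound, perturbative support localization, implicit function theorem for the edge) and you rightly single out Part (3) as the delicate step because of the square‑root degeneracy of the self‑consistent equation at the edge. But the proposal has genuine gaps, and the part you flag as ``the hard part'' is exactly where it stops short.

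For Part (1), the appeal to ``$|\langle M_{E_1}\rangle|\le 1$'' or ``Stieltjes transform of a probability measure'' does not by itself give the claim; one needs a quantitative lower bound on the distance from the spectrum of $-\Re D_k(E_1)$ to the shifted spectral parameter. The paper does this cleanly: writing $\omega := z + \langle M_{E_1}(z)\rangle$, so that $M_{E_1}(z)=-(\Re D_k(E_1)+\omega)^{-1}$, multiplying the MDE by $M_{E_1}$ and taking the trace (together with the imaginary‑part bound $(N-k)^{-1}\mathrm{Tr}[M_{E_1}M_{E_1}^*]\le 1$ and \eqref{eq:D_HS}) yields $|\omega|\ge 1-\mathcal{O}(\sqrt{k/N})$, which combined with $\|\Re D_k(E_1)\|\le |E_1|^{-1}(1+c)<1$ gives \eqref{eq:M_bound}. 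Your sketch never establishes $|\omega|\ge 1-o(1)$, which is the crux.

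For Part (2), citing the generic stability of the MDE gives a perturbative statement but not the sharp $\mathcal{O}(k/N)$ scale in \eqref{eq:supp}. The paper instead passes to the scalar equation $\omega + N^{-1}\mathrm{Tr}(\Re D_k+\omega)^{-1}=z$, uses $\mathrm{rank}\,\Re D_k\le k$ (so at most $2k$ nonzero eigenvalues $\mu_j$), writes $\omega+\frac{1}{\omega}+\frac{1}{N}\big(\sum_j\frac{1}{\mu_j+\omega}-\frac{2k}{\omega}\big)=z$, and then separately analyzes the cases $E\notin\mathrm{supp}$ ($\omega$ real, $|E|\ge 2+\mathcal{O}(k/N)$) and $E\in\mathrm{supp}$ ($\Im\omega>0$, take real/imaginary parts to get $|E|\le 2+\mathcal{O}(k/N)$); the single‑interval statement then follows from \cite[Theorem 7.1]{shape}, which asserts support intervals have length $\sim 1$. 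This is elementary and exploits the low‑rank structure that your argument does not use.

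For Part (3) — the main issue — you correctly observe that a naive implicit function theorem on $E_1\mapsto \mathrm{E}_+(E_1)$ via the scalar self‑consistent equation fails because the stability factor degenerates like $\sqrt{\kappa}$ at the edge, and you correctly intuit that one should reparametrize so the square‑root cancels. But you leave this as a to‑do. The paper carries it out concretely with $\omega$ as the auxiliary variable: the edge value $\omega_+(E_1):=\omega_{E_1}(\mathrm{E}_+(E_1)+\ii 0)$ solves the \emph{non‑degenerate} equation $N^{-1}\mathrm{Tr}(\Re D_k(E_1)+\omega_+)^{-2}=1$ (non‑degenerate because the $\omega_+$-derivative of the left side, $-2N^{-1}\mathrm{Tr}(\Re D_k+\omega_+)^{-3}$, is bounded away from zero since $-\omega_+$ is order‑one separated from $\mathrm{spec}(\Re D_k)$). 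Applying the IFT to this equation gives \eqref{eq:omega_+_dif} and $|\partial_{E_1}\omega_+|\lesssim k/N$, using $\|\partial_{E_1}\Re D_k\|\lesssim 1$ together with $\mathrm{rank}\,\partial_{E_1}\Re D_k\le k$. Then differentiating $\mathrm{E}_+=\omega_+ + N^{-1}\mathrm{Tr}(\Re D_k+\omega_+)^{-1}$ in $E_1$, the coefficient of $\partial_{E_1}\omega_+$ is exactly $1-N^{-1}\mathrm{Tr}(\Re D_k+\omega_+)^{-2}=0$ by the edge condition, so the $\partial_{E_1}\omega_+$ contribution cancels identically and $\partial_{E_1}\mathrm{E}_+=-N^{-1}\mathrm{Tr}[(\Re D_k+\omega_+)^{-2}\partial_{E_1}\Re D_k]=\mathcal{O}(k/N)$. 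This exact cancellation is the missing ingredient in your Step 3: without it, the implicit‑differentiation approach genuinely does not close.
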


\begin{proposition}\label{prop:edge} Let $\varepsilon_0>0$ be the constant from Theorem \ref{theo:large_k}, i.e. $k\le N^{1-\varepsilon_0}$. Then we have
\begin{equation}
\mathrm{E}_+(E_1)=2+\mathcal{O}(N^{-2/3+2\xi_0}), \quad \forall E_1\in [2-N^{-2/3+\varepsilon_0},2]
\label{eq:E_loc}
\end{equation}
with very high probability as $N \to \infty$. 
\end{proposition}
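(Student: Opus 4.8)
The plan is to collapse the interval $[2-N^{-2/3+\varepsilon_0},2]$ to the single energy $E_1=2$, and then to pin down $\mathrm{E}_+(2)$ by relating the deformed minor $H^{(N-k)}-\Re D_k$ to the full Wigner matrix $H^{(N)}$ through the Schur complement identity behind \eqref{eq:2G_goal}--\eqref{eq:D_def}. \emph{Step 1 (reduction to $E_1=2$).} For large $N$ we have $[2-N^{-2/3+\varepsilon_0},2]\subset[1+\delta,2]$, so Lemma~\ref{lem:M_prop}(3) applies on the whole interval and gives, with very high probability,
\[
\big|\mathrm{E}_+(E_1)-\mathrm{E}_+(2)\big|\le\int_{E_1}^{2}\Big|\tfrac{\dif}{\dif s}\mathrm{E}_+(s)\Big|\,\dif s\lesssim\frac{k}{N}\,(2-E_1)\le N^{-\varepsilon_0}\cdot N^{-2/3+\varepsilon_0}=N^{-2/3}.
\]
Hence it suffices to prove $\mathrm{E}_+(2)=2+\mathcal{O}(N^{-2/3+2\xi_0})$ w.v.h.p.

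\emph{Step 2 (Schur complement).} Condition on $X_k$ and $H^{[k]}$ (which are independent of $H^{(N-k)}$), and for real $\zeta>\|H^{[k]}\|$ set $\widetilde D_k(\zeta):=X_k(H^{[k]}-\zeta)^{-1}X_k^*$; this is Hermitian with $-\widetilde D_k(\zeta)\succeq0$ and $\tfrac{\dif}{\dif\zeta}\widetilde D_k(\zeta)=X_k(H^{[k]}-\zeta)^{-2}X_k^*\succeq0$. By the Schur complement formula $\zeta\in\mathrm{spec}(H^{(N)})\setminus\mathrm{spec}(H^{[k]})$ if and only if $\zeta\in\mathrm{spec}(H^{(N-k)}-\widetilde D_k(\zeta))$, and every branch $\zeta\mapsto\lambda_j(H^{(N-k)}-\zeta-\widetilde D_k(\zeta))$ has derivative $\le-1$ and hence crosses $0$ exactly once; therefore $\lambda_1(H^{(N)})$ equals the unique $\zeta^\ast$ with $\lambda_1(H^{(N-k)}-\widetilde D_k(\zeta^\ast))=\zeta^\ast$. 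Edge rigidity for Wigner matrices \cite{loc_sc_gen} gives $\zeta^\ast=\lambda_1(H^{(N)})=2+\mathcal{O}_\prec(N^{-2/3})$. Since $\|X_k\|\lesssim1$ and $\|H^{[k]}\|\lesssim\sqrt{k/N}$ w.v.h.p., one has $\|\widetilde D_k'(\zeta)\|\lesssim1$ near $\zeta=2$ and $\|\widetilde D_k(\zeta^\ast)-\Re D_k(2)\|\lesssim|\zeta^\ast-2|+\eta_1^2\prec N^{-2/3}$ (the $\eta_1^2$ being the cost of replacing $(H^{[k]}-2)^{-1}$ by $\Re(H^{[k]}-2-\ii\eta_1)^{-1}$), so first–order eigenvalue perturbation yields
\[
\lambda_1\big(H^{(N-k)}-\Re D_k(2)\big)=\zeta^\ast+\mathcal{O}_\prec(N^{-2/3})=2+\mathcal{O}_\prec(N^{-2/3}).
\]

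\emph{Step 3 (edge rigidity for the deformed minor, and conclusion).} Conditionally on $X_k,H^{[k]}$, the matrix $H^{(N-k)}-\Re D_k(2)$ is a Wigner matrix perturbed by a \emph{fixed} Hermitian deformation whose spectral measure concentrates at $\delta_0$ (its nonzero part has mass $\lesssim k/N$, cf.~\eqref{eq:D_HS}) and whose eigenvalues are bounded by $\tfrac12+o(1)$, hence lie well below the BBP threshold by Lemma~\ref{lem:D} and Lemma~\ref{lem:XX^*}. Consequently $H^{(N-k)}-\Re D_k(2)$ has no eigenvalue separated above $\mathrm{E}_+(2)$, and by the stability of regular edges under small deformations (Matrix Dyson Equation stability theory, see \cite{shape,MDEreview}) $\mathrm{E}_+(2)$ is a regular square–root edge of the associated density. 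The single–resolvent local law and edge rigidity for deformed Wigner matrices near a regular edge then give $|\lambda_1(H^{(N-k)}-\Re D_k(2))-\mathrm{E}_+(2)|\prec N^{-2/3}$. Combined with the previous display this gives $\mathrm{E}_+(2)=2+\mathcal{O}_\prec(N^{-2/3})$; taking the exponent $\xi_0$ in the definition of $\mathcal{O}_\prec$ upgrades this to $|\mathrm{E}_+(2)-2|\le N^{-2/3+2\xi_0}$ w.v.h.p., which together with Step~1 proves Proposition~\ref{prop:edge}.

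The main obstacle is Step~3: one must supply, uniformly over the admissible range $k\le N^{1-\varepsilon_0}$, the edge local law for the conditionally deterministic deformed Wigner matrix $H^{(N-k)}-\Re D_k(2)$, together with the two structural inputs that its right edge is regular (so eigenvalues stick to it on scale $N^{-2/3}$) and carries no outlier. All three are standard for a Wigner matrix perturbed by a small bounded Hermitian deformation — regularity from MDE stability, absence of outliers from the subcriticality $\|\Re D_k\|\le\tfrac12(1+c)<1$ of Lemma~\ref{lem:D}, and $N^{-2/3}$ rigidity from the edge local law — but they have to be assembled carefully; everything else in the argument is elementary perturbation theory.
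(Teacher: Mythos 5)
Your proof is correct in its overall logic, but it takes a genuinely different route from the paper. Step~1 (reduction to $E_1=2$ via Lemma~\ref{lem:M_prop}(3)) is identical. The divergence is in how $\mathrm{E}_+(2)$ is pinned down. The paper stays at the \emph{resolvent} level: it uses the Schur complement identity \eqref{eq:id_for_edge} and Lemma~\ref{lem:Im} to match $(\Im G^{(N)})_{NN}$ with $(\Im(H^{(N-k)}-\Re D_k(2)-z)^{-1})_{N-k,N-k}$, computes the former from the known Wigner local law to be $\sim\sqrt{\eta_1}$, then feeds the latter into the deformed local law \eqref{eq:N-k_loc_law} and runs a \emph{contradiction} argument: if $\mathrm{E}_+(2)$ were $\gtrsim N^{-2/3+2\xi_0}$ above $2$ the density would be too large, if $\mathrm{E}_+(2)$ were that far below $2$ the local-law term $(N\eta_1)^{-1}$ would be too small. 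You instead work at the \emph{eigenvalue} level: the Schur fixed-point characterization of $\lambda_1(H^{(N)})$, rigidity of $\lambda_1(H^{(N)})$, and Weyl perturbation to pass from $\widetilde D_k(\zeta^\ast)$ to $\Re D_k(2)$, pin down $\lambda_1(H^{(N-k)}-\Re D_k(2))$; then you invoke edge rigidity for the (conditionally deterministic) deformed Wigner ensemble to identify this eigenvalue with $\mathrm{E}_+(2)$.

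What each approach buys: the paper's argument is technically lighter because it only uses the isotropic local law from \cite{Alt_band} directly as a two-sided bound, without needing to assemble the ``three structural facts'' you flag as the main obstacle (regular square-root edge, absence of outliers, $N^{-2/3}$ edge rigidity). These are indeed standard consequences of the MDE shape analysis of \cite{shape} together with $\lVert\Re D_k\rVert<1$ and the single-interval support from Lemma~\ref{lem:M_prop}(2), but they constitute a heavier package than the paper requires. On the other hand, your route gives a cleaner conceptual statement ($\lambda_1$ of the deformed minor sits at $2+\mathcal{O}_\prec(N^{-2/3})$ for structural reasons) and nominally a slightly sharper conclusion, $\mathrm{E}_+(2)=2+\mathcal{O}_\prec(N^{-2/3})$ rather than the $N^{-2/3+2\xi_0}$ window that is all the paper needs. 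Two small points worth tightening if you flesh this out: (i) in Step~2 you should state explicitly why the largest fixed point $\zeta^\ast$ corresponds to the branch $j=1$ (it follows from $\lambda_1(H^{(N-k)}-\zeta-\widetilde D_k(\zeta))\to+\infty$ as $\zeta\downarrow\max\mathrm{spec}(H^{[k]})$ and the monotonicity, together with $\lambda_1(H^{(N)})\approx 2\gg\lVert H^{[k]}\rVert$); (ii) the passage from the unconditional statement $\lambda_1(H^{(N-k)}-\Re D_k(2))=2+\mathcal{O}_\prec(N^{-2/3})$ to the $X_k,H^{[k]}$-measurable conclusion about $\mathrm{E}_+(2)$ is a Fubini-type argument that should be spelled out, since the edge rigidity in Step~3 is conditional on $(X_k,H^{[k]})$ while the rigidity input in Step~2 is not.
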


\begin{proof}[Proof of Lemma \ref{lem:M_prop}]
The proof is divided in three parts. \\[1mm]
\textit{Part (1):} First of all we write $M_{E_1}(z)$ as
\begin{equation}
	M_{E_1}(z)=-(\Re D_k(E_1) + z+N^{-1}\mathrm{Tr} M_{E_1}(z))^{-1},
	\label{eq:M_res}
\end{equation}
i.e. $M_{E_1}(z)$ is the resolvent of $-\Re D_k(E_1)$ at the point
\begin{equation}
	\omega = \omega_{E_1}(z):=z+N^{-1}\mathrm{Tr} M_{E_1}(z).
	\label{eq:omega_def}
\end{equation}
By multiplying \eqref{eq:MDE_k} by $M_{E_1}$ and taking the trace we get that 
\begin{equation*}
	\omega = - \frac{N^{-1}(N-k) + N^{-1}\mathrm{Tr} \left[ M_{E_1}(z)\Re D_k(E_1)\right]}{N^{-1}\mathrm{Tr} M_{E_1}(z)}. 
\end{equation*}
Taking imaginary part of \eqref{eq:MDE_k} we conclude that $(N-k)^{-1}\mathrm{Tr}\left[ M_{E_1}(z)M_{E_1}^*(z)\right] \le 1$ and arrive at
\begin{equation}
	|\omega| \ge N^{-1}(N-k) - \left( N^{-1}\mathrm{Tr} \left[ (\Re D_k(E_1))^2\right]\right)^{1/2} = 1-\mathcal{O}\left( \sqrt{k/N}\right),
	\label{eq:omega_bound}
\end{equation}
where in the last step we used \eqref{eq:D_HS} from Lemma \ref{lem:D}. Together \eqref{eq:M_res}, \eqref{eq:omega_bound} and the first part of \eqref{eq:D_bounds} imply~\eqref{eq:M_bound}.\\[1 mm]

\noindent\textit{Part (2):} From \eqref{eq:MDE_k} we see that $\omega$ defined in \eqref{eq:omega_def} satisfies the following quadratic equation:
\begin{equation}
	\omega + \frac{1}{N} \mathrm{Tr} \left(\Re D_k(E_1) +\omega\right)^{-1} = z.
	\label{eq:omega}
\end{equation} 
The explicit formula \eqref{eq:D_E_def} implies that $\mathrm{rank} \, \Re D_k(E_1)\le k$. Therefore, $\Re D_k(E_1)$ has at least $N-2k$ zero eigenvalues. Denote the remaining ones by $\{ \mu_j\}_{j=1}^k$ (some of these $\mu_j$'s may be equal to zero). Using this notation we rewrite \eqref{eq:omega} as
\begin{equation}
	\omega + \frac{1}{\omega} + \frac{1}{N} \left( \sum_{j=1}^k \frac{1}{\mu_j+\omega} -\frac{2k}{\omega}\right)=z.
	\label{eq:omega_fin}
\end{equation}
From \eqref{eq:D_bounds} we have the following bound on $\mu_j$'s:
\begin{equation}
	\vert \mu_j\vert \le \lVert \Re D_k(E_1)\rVert \le \vert E_1\vert^{-1}(1+c),\quad \forall j\in[k],
	\label{eq:mu_bounds}
\end{equation}
for any fixed constant $c>0$. Now we take $z:=E+\ii 0$, $E\in\R$. Note that $E$ is an internal point of $\mathrm{supp}(\rho_{E_1})$ if and only if $\Im \omega>0$. 

Assume that $E$ is outside of $\mathrm{supp}(\rho_{E_1})$. Then $\omega\in\R$ and we have from \eqref{eq:omega_fin} that
\begin{equation*}
	\vert E\vert = \left\vert\omega +\frac{1}{\omega}\right\vert + \mathcal{O}\left(\frac{k}{N}\right)\ge 2 + \mathcal{O}\left(\frac{k}{N}\right),
\end{equation*}
where in the first equality we used \eqref{eq:omega_bound} and \eqref{eq:mu_bounds}. This verifies the first inclusion in \eqref{eq:supp}.

Now consider the case when $E$ is an internal point of $\mathrm{supp}(\rho_{E_1})$, i.e. $\Im \omega>0$. By taking the imaginary part of \eqref{eq:omega_fin} and dividing the result by $\Im\omega$ we get
\begin{equation}
	1-\frac{1}{\vert\omega\vert^2} = \frac{1}{N}\left( \sum_{j=1}^k \frac{1}{\vert\mu_j+\omega\vert^2} - \frac{2k}{|\omega|^2}\right) = \mathcal{O}\left(\frac{k}{N}\right),
	\label{eq:Im_eq_omega}
\end{equation}
where in the second equality we have used \eqref{eq:omega_bound} and \eqref{eq:mu_bounds}. Thus $\vert\omega\vert^2 = 1+\mathcal{O}(k/N)$. Next we take the real part of \eqref{eq:omega_fin} yielding
\begin{equation*}
	\Re\omega \left(1+\frac{1}{|\omega|^2}\right) = E - \frac{1}{N}\left( \sum_{j=1}^k \frac{\mu_j+\Re\omega}{|\mu_j+\omega|^2}-\frac{2k\Re\omega}{\vert\omega\vert^2}\right) = E+\mathcal{O}\left(\frac{k}{N}\right).
\end{equation*}
Since $\vert \Re\omega\vert\le\vert\omega\vert = 1+\mathcal{O}(k/N)$, we conclude that $\vert E\vert\le 2+\mathcal{O}(k/N)$, which finishes the proof of the second inclusion in \eqref{eq:supp}.

Finally, \cite[Theorem 7.1]{shape} asserts that all support intervals of $\rho_{E_1}$ have length of order one. Together with \eqref{eq:supp} this implies that $\rho_{E_1}$ has a single interval of support.\\[1 mm]
\noindent\textit{Part (3):} From \eqref{eq:Im_eq_omega} we see that $\omega_+(E_1):=\omega_{E_1}(\mathrm{E}_+(E_1)+\ii 0)\in\R$ solves the equation
\begin{equation}
	N^{-1}\left\langle \vert \Re D_k(E_1)+\omega_+(E_1)\vert^{-2}\right\rangle = 1.
	\label{eq:omega_+}
\end{equation}
The implicit function theorem implies that $\omega_+(E_1)$ is differentiable on $[1+\delta,2]$ and by differentiating \eqref{eq:omega_+} in $E_1$ we get 
\begin{equation}
	\partial_{E_1}\omega_+(E_1) = - \frac{N^{-1}\mathrm{Tr} \left[ \left(\Re D_k(E_1)+\omega_+(E_1)\right)^{-3}\partial_{E_1}\Re D_k(E_1)\right]}{N^{-1}\mathrm{Tr} \left[ \left(\Re D_k(E_1)+\omega_+(E_1)\right)^{-3}\right]}.
	\label{eq:omega_+_dif}
\end{equation}
Since the distance from $-\omega_+(E_1)$ to the spectrum of $\Re D_k(E_1)$ is of order one (see the proof of the first part of Lemma \ref{lem:M_prop}) and the Hilbert-Schmidt norm of 
\begin{equation*}
	\partial_{E_1}\Re D_k(E_1) =\Re\left( X_k(H^{[k]}-E_1-\ii\eta_1)^{-2}X_k^*\right)
\end{equation*}
is of order $k/N$, we conclude that 
\begin{equation}
	\vert\partial_{E_1}\omega_+(E_1)\vert\lesssim k/N.
	\label{eq:omega_dif_res}
\end{equation}
Consider \eqref{eq:omega} for $z=\mathrm{E}_+(E_1)+\ii 0$ and differentiate this identity in $E_1$. Arguing similarly to \eqref{eq:omega_+_dif} and using \eqref{eq:omega_dif_res} as an input, we finish the proof of \eqref{eq:dif_E}.
\end{proof}

\begin{proof}[Proof of Proposition \ref{prop:edge}]
Taking into account \eqref{eq:dif_E}, we see that it is sufficient to prove \eqref{eq:E_loc} for $E_1=2$. In order to approach this case, observe that the following identity holds:
\begin{equation}
	\begin{split}
		\left(\Im (H-2-\ii\eta_1)^{-1}\right)_{NN} = &\left(\Im(H^{(N-k)}-D_k(E_1)-2-\ii\eta_1)^{-1}\right)_{N-k,N-k}\\
		\sim &\left(\Im(H^{(N-k)}-\Re D_k(2)-2-\ii\eta_1)^{-1}\right)_{N-k,N-k},
	\end{split}
	\label{eq:id_for_edge}
\end{equation}
Here in the first line we used Schur decomposition performed in the same way as in \eqref{eq:2G_goal}, and in the second line applied Lemma \ref{lem:Im}. Using the single-resolvent isotropic local law \cite[Theorem 2.6]{Alt_band} we get
\begin{equation*}
	\left(\Im (H-2-\ii\eta_1)^{-1}\right)_{NN} = \rho_{\rm sc}(2+\ii\eta_1)(1+o(1))\sim \sqrt{\eta_1}
\end{equation*}
with very high probablity. Therefore,
\begin{equation}
	\left(\Im(H^{(N-k)}-\Re D_k(2)-2-\ii\eta_1)^{-1}\right)_{N-k,N-k} \sim \sqrt{\eta_1}.
	\label{eq:edge_ub_aux}
\end{equation}
Next we condition in \eqref{eq:edge_ub_aux} on $D_k(2)$. Assume that \eqref{eq:E_loc} does not hold for the right edge of  the resulting $\rho_2=\rho_{E_1=2}$. The local law \cite[Theorem 2.6]{Alt_band} for the left-hand-side of \eqref{eq:edge_ub_aux} gives that
\begin{equation}
	\left(\Im(H^{(N-k)}-\Re D_k(2)-2-\ii\eta_1)^{-1}\right)_{N-k,N-k} = \rho_2(2+\ii\eta_1) + \mathcal{O}_\prec\left(\frac{1}{N\eta_1}+\sqrt{\frac{\rho_2(2+\ii\eta_1)}{N\eta_1}}\right)
	\label{eq:N-k_loc_law}
\end{equation}
with very high probability. If $\mathrm{E}_+(2)\ge 2+N^{-2/3+2\xi_0}$, then the first term in the right-hand-side of \eqref{eq:N-k_loc_law} is the leading one, so the left-hand-side of \eqref{eq:N-k_loc_law} is of order $\sqrt{\mathrm{E}_+(2)-2}\ge N^{\xi_0/2}\sqrt{\eta_1}$, which contradicts to \eqref{eq:edge_ub_aux}. In the case $\mathrm{E}_+(2)\le 2-N^{-2/3+2\xi_0}$ we get that the right-hand-side of \eqref{eq:N-k_loc_law} is stochastically dominated by $(N\eta_1)^{-1}$, which is smaller than $\sqrt{\eta_1}$ by a factor $N^{\xi_0}$. We again arrive at contradiction to \eqref{eq:edge_ub_aux}. This concludes the proof and we deduce that \eqref{eq:E_loc} must hold.
\end{proof}

\subsection{Proof of \eqref{eq:2G_special}}\label{subsec:2G_special} Our argument is based on the results proven in Appendices \ref{app:2G} and \ref{app:M}. For clarity, we restate \eqref{eq:2G_special} using the notation $D_k(E_1)$ introduced in \eqref{eq:D_E_def}
\begin{equation}
\left\langle \Im \left(H^{(N-k)}-\Re D_k(E_1) - z_1\right)^{-1} \Im (H^{(N-k)}-z_2)^{-1} \right\rangle\prec  \left\langle(\Re D_k(E_1) - \left\langle\Re D_k(E_1)\right\rangle)^2\right\rangle  ^{-1},
\label{eq:2G_special_app}
\end{equation}
where $E_1=\gamma_i^{(N)}$, $E_2=\gamma_j^{(N-k)}$, $\eta_1=\eta_2=N^{-2/3+\xi_0}$ and $z_l=E_l+\ii\eta_l$ for $l=1,2$. Exploiting the independence of the matrices $H^{(N-k)}$ and $\Re D_k(E_1)$, we condition on $D_k(E_1)$ in the left-hand-side of \eqref{eq:2G_special_app}. We interpret the resulting expression as the trace of the product of two resolvents, forming a two-resolvent chain. Note that these resolvents correspond to a Wigner matrix deformed in two different ways (actually, the second deformation equals to zero). Such quantities are analyzed in Appendix \ref{app:2G}. In particular, Proposition \ref{prop:2G} provides an upper bound on the fluctuation of a two-resolvent chain around its deterministic counterpart. This result is complemented by Proposition \ref{prop:M2_bound} which establishes an upper bound on the deterministic counterpart.

We now explain how Proposition \ref{prop:2G} is applied in the current setting. Although this result was originally stated under the assumption that $W$ is a Wigner matrix, it remains valid for $W:=H^{(N-k)}$, provided that the term $\langle M^D\rangle$ in the right-hand-side of \eqref{eq:MDE} is multiplied by $1-k/N$. This adjustment follows from the fact that $H^{(N-k)}$ multiplied by $(1-k/N)^{-1/2}$ is a Wigner matrix. Consequently, this factor can be extracted from the resolvents in the left-hand-side of \eqref{eq:2G_special_app}, allowing us to apply \eqref{eq:2G_loc}. 

The remainder of the proof is divided into two parts. First, we show that Proposition \ref{prop:2G}, together with Proposition \ref{prop:M2_bound} implies \eqref{eq:2G_special_app}. Then, we verify that the conditions of these propositions are satisfied and complete the proof of \eqref{eq:2G_special}.

We rewrite the left-hand-side of \eqref{eq:2G_special_app} as a linear combination of four terms by expanding the imaginary parts. While the two-resolvent chain with imaginary parts is typically smaller than its counterpart without imaginary parts in the regime, when the spectral parameters are near the edge (see e.g. \cite[Theorem 2.4]{cipolloni2023eigenstate}), we do not exploit this refinement here. Since the argument is identical for each term, we focus on the following representative case:
\begin{equation}
\left\langle \left(H^{(N-k)}-\Re D_k(E_1) - z_1\right)^{-1} (H^{(N-k)}-z_2)^{-1} \right\rangle.
\label{eq:GG_term}
\end{equation}
Applying Proposition \ref{prop:2G} with $W:=H^{(N-k)}$ and traceless deformations $-\Re D_k(E_1) + \langle \Re D_k(E_1)\rangle$ and $0$, we establish that the fluctuation of \eqref{eq:GG_term} around the corresponding deterministic counterpart $\langle M_{\nu_1,\nu_2}^I\rangle$ has an upper bound of order $\gamma^{-1}$. Next, Proposition \ref{prop:M2_bound} shows that $\langle M_{\nu_1,\nu_2}^I\rangle$ itself has an upper bound of order $\gamma^{-1}$. Finally, estimating $\gamma$ from below simply by the first term in the right-hand-side of \eqref{eq:gamma}, we conclude the proof of \eqref{eq:2G_special}.

To ensure that the application of Proposition \ref{prop:2G} is valid, we need to verify that for some fixed $\varepsilon, L>0$, the bound $\lVert M_{E_1}(z)\rVert\le L$ holds uniformly in $z\in\C\setminus\R$, and that $\eta_1\rho_{E_1}(z_1)\ge N^{-1+\varepsilon}$. Here we use the notation introduced in Appendix \ref{app:M}. The corresponding bounds for the deterministic counterpart of the second resolvent in \eqref{eq:GG_term} follow immediately, as it coincides with $m_{sc}$, defined above \eqref{eq:m_rigidity}, up to some explicit rescaling. For the bound $\lVert M_{E_1}(z)\rVert\le L$, we recall that $E_1=\gamma_i^{(N)}$ with $i\le N^{\varepsilon_0}$, so
\begin{equation}
0\le 2-E_1\lesssim N^{-2/3(1-\varepsilon_0)}.
\label{eq:E_1_bound}
\end{equation} 
In particular, $E_1\ge 1+\delta$ for some fixed $\delta>0$, and the bound \eqref{eq:M_bound} from Lemma \ref{lem:M_prop} implies the boundedness of $\lVert M_{E_1}(z)\rVert$. 

Lastly, we establish the bound $\eta_1\rho_{E_1}(z_1)\ge N^{-1+\varepsilon}$. Recall that $\mathrm{E}_+(E_1)$, introduced in Lemma \ref{lem:M_prop}, is the regular right edge of $\rho_{E_1}$. We compare $E_1$ with $\mathrm{E}_+(E_1)$ and consider two cases: $E_1\le \mathrm{E}_+(E_1)$ and $E_1>\mathrm{E}_+(E_1)$. In the first case $E_1\in\mathrm{supp}(\rho_{E_1})$, so \cite[Theorem 7.1]{shape} implies that
\begin{equation*}
\rho(E_1)\sim \sqrt{\mathrm{E}_+(E_1)-E_1}+\sqrt{\eta_1}\ge \sqrt{\eta_1}.
\end{equation*}
Consequently, we obtain $\eta_1\rho_{E_1}(z_1)\gtrsim \eta_1^{3/2}\ge N^{-1+3/2\xi_0}$. In the second case, where $E_1$ lies outside of the support of $\rho_{E_1}$, a similar analysis yields
\begin{equation}
\rho(E_1)\sim \frac{\eta_1}{\sqrt{E_1-\mathrm{E}_+(E_1)}+\sqrt{\eta_1}}.
\label{eq:E_1_outside}
\end{equation}
Since \eqref{eq:E_1_bound} holds, Proposition \ref{prop:edge} ensures that $\vert \mathrm{E}_+(E_1)-E_1\vert\lesssim N^{-2/3+2\xi_0}$. Substituting this bound into \eqref{eq:E_1_outside}, we conclude that $\eta_1\rho_{E_1}(z_1)\gtrsim  N^{-1+\xi_0}$, completing the proof of \eqref{eq:2G_special}.\qed

\bibliographystyle{plain} 
\bibliography{refs}

\end{document}